\documentclass[10pt]{article}
\usepackage{amsmath, amsfonts, amssymb, verbatim, hyperref}
\usepackage{multirow} 
\usepackage{graphicx}
\usepackage{pstricks}
\usepackage[all,cmtip]{xy}
\usepackage{lscape}
\usepackage{longtable}

\newtheorem{theorem}{Theorem}[section]
\newtheorem{definition}[theorem]{Definition}

\newtheorem{corollary}[theorem]{Corollary}

\newtheorem{lemma}[theorem]{Lemma}

\newtheorem{proposition}[theorem]{Proposition}

\newenvironment{proof}[1][]{ \textbf{Proof#1. }}{$\Box$\medskip}

\newcommand{\gog}{{\mathfrak g}}

\newcommand{\gop}{{\mathfrak p}}
\newcommand{\gom}{{\mathfrak m}}
\newcommand{\goh}{{\mathfrak h}}

\newcommand{\gon}{{\mathfrak n}}
\newcommand{\gol}{{\mathfrak l}}
\newcommand{\gob}{{\mathfrak b}}
\newcommand{\gos}{{\mathfrak s}}
\newcommand{\got}{{\mathfrak t}}
\newcommand{\ad}{\mathrm{ad}}

\newcommand{\CharL}{\mathbf{ch} }
\newcommand{\CharS}{\mathbf{{\overline {ch}}} }
\DeclareMathOperator{\pr}{pr}

\newcommand{\scalarSA}[2]{\langle #1,#2 \rangle_{\bar \gog}}
\newcommand{\scalarLA}[2]{\langle #1,#2 \rangle_{\gog}}
\newcommand\clplus{\hbox{$\subset${\raise0.3ex\hbox{\kern -0.55em ${\scriptscriptstyle +}$}}\ }}
\newcommand\crplus{\hbox{$\supset${\raise1.05pt\hbox{\kern -0.60em ${\scriptscriptstyle +}$}}\ }}

\newcommand{\mV}{\mathbb V}

\newcommand{\mC}{\mathbb C}

\newcommand{\mZ}{\mathbb Z}

\DeclareMathOperator{\Cone}{Cone}
\DeclareMathOperator{\sign}{sign}

\DeclareMathOperator{\weights}{Weights}
\DeclareMathOperator{\supp}{Supp}
\DeclareMathOperator{\PrFin}{PrFin}

\newcommand{\mR}{\mathbb R}

\newcommand{\Tr}{\operatorname{Tr}}

\newcommand{\Hom}{\operatorname{Hom}}

\newcommand{\LieGtwo}{\mathrm{Lie~}G_2}
\newcommand{\LieAlgPair}[2]{#1\stackrel{i}{\hookrightarrow} #2}

\newcommand{\BThreeDiagram}[3]{
\gop_{(#1,#2,#3)}
}

\newcommand{\GTwoDiagram}[2]{
\bar{\gop}_{(#1,#2)}
}

\newcommand{\onlineVersionOnly}[1]{#1}
\newcommand{\offlineVersionOnly}[1]{}

\begin{document}
\baselineskip13pt

\title{The branching problem for generalized Verma modules, with application to the 
pair $(so(7),\LieGtwo)$ \onlineVersionOnly{ \\~\\
Extended version with tables
}
}
\author{\large T. Milev \\
\small Charles University Prague \\
\small and University of Massachusetts Boston
\\
\large P. Somberg
\\
\small 
Charles University Prague
}

\maketitle

\begin{abstract}
We discuss the branching problem for generalized Verma modules 
$M_\lambda(\mathfrak g, \mathfrak p)$ applied to 
couples of reductive Lie algebras $\bar{\mathfrak g}\stackrel{i}{\hookrightarrow} \mathfrak g$. 
The analysis is based on
projecting character formulas to quantify the branching, and on
the action of the center of $U(\bar{\mathfrak g})$ to explicitly 
construct singular vectors realizing part of the 
branching. We demonstrate the results on the pair 
$\mathrm{Lie~}G_2\stackrel{i}\hookrightarrow{so(7)}$ for both strongly and weakly compatible 
with $i(\mathrm {Lie~} G_2)$ parabolic subalgebras and 
a large class of inducing representations.
\end{abstract}

{\bf Key words:} Generalized Verma modules, branching problems, 
branching rules, character formulas, singular vectors,
non-symmetric pairs, $(so(7),\LieGtwo)$.

{\bf MSC classification:} 22E47, 17B35, 17B10.
  

\section{Introduction}
For an embedding of reductive complex algebraic Lie groups, $\bar G\stackrel{ j}{\hookrightarrow} G$ 
with Lie algebras $\bar \gog$ and $\gog$, the branching rules of 
generalized Verma $\gog$-modules over $\bar \gog$ are a central problem of representation theory, harmonic 
analysis and geometry. 

To our best knowledge, the most general treatment of the problem is given in 
\cite{Kobayashi:branching}, \cite{koss}. The article \cite{Kobayashi:branching} restricts to 
a couple of reductive Lie algebras $(\gog,i(\bar{\gog}))$ and a parabolic 
subalgebra $\gop\subset\gog$ compatible with $i(\bar{\gog})$, and the question of discrete decomposability 
of an element in the Bernstein-Gelfand-Gelfand parabolic category 
${\fam2 O}^\gop$ over $\bar\gog$ is 
achieved by employing
the geometrical properties of the double coset $N_G(i(\bar\gog))\backslash G/P$. 
Here $N_G(i(\bar\gog))$ denotes 
the normalizer of $i(\bar\gog)$ in $G$.

We assume that $\gog$ is a semisimple Lie algebra, $i(\bar\gog)$ is reductive in $\gog$ and 
$i(\bar\gob)\subset\gob\subset\gop$, where $\bar{\gob}$ and $\gob$ are Borel 
subalgebras of respectively $\bar{\gog}$ and $\gog$. 
Let $M_{\lambda}(\gog,\gop)$ be the generalized Verma $\gog$-module induced 
from the irreducible finite dimensional 
$\gop$-module with highest weight $\lambda$.
We define the branching problem of $M_{\lambda}(\gog,\gop)$ over 
$\bar{\gog}$ to be the problem of finding all $\bar\gob$-singular 
vectors in $M_{\lambda}(\gog,\gop)$, that is, the set of 
all vectors annihilated by image of the nilradical of $\bar\gob$
on which the image of the Cartan subalgebra of $\bar\gob$ has diagonal action. 

The branching problem can be split into two closely related sub-problems. 
First, prove that $M_{\lambda}(\gog,\gop)$ has (finite or infinite) Jordan-H\"older series over $\bar{\gog}$, 
enumerate the $\bar\gob$-highest weights $\mu$ appearing in the series, and give their multiplicity as a 
function of $\mu$ and $\lambda$. Second, for each $\mu$, compute explicitly 
a basis for the $\bar\gob$-singular vectors of weight $\mu$. 

In the present article, under certain technical assumptions, 
we reduce the first step of the branching problem to the  problem of understanding 
a single central character block in Category $\mathcal O^{\bar{\gop}}$. 
More precisely, we express the character of $M_{\lambda}(\gog,\gop)$ 
over $\bar\gog$ as a sum of characters of 
$M_\mu(\bar{\gog}, \bar{\gop})$ over $\bar\gog$ and so
reduce the branching rules 
of $M_{\lambda}(\gog,\gop)$ over $\bar\gog$ to 
those of $M_\mu(\bar{\gog}, \bar{\gop})$ over $\bar\gog$. 
We also conjecture that our technical assumptions are not necessary, 
and conjecturally our result on multiplicities holds in full generality.

For the second step of the branching problem, with additional assumptions, we 
show how to find explicitly the highest weight $\bar\gob$-singular vectors at a certain ``top level'' 
$\bar{\gog}$-submodule of $M_{\lambda}(\gog,\gop)$, and give sufficient conditions for this 
``top level'' to equal the entire $M_{\lambda}(\gog,\gop)$. 

We illustrate our results on the pilot example $\LieAlgPair{\LieGtwo}{so(7)}$. Finally, we present
tables with $\bar\gob$-singular vectors and multiplicities for a large set of inducing representations 
for the pair $\LieAlgPair{\LieGtwo}{so(7)}$. 

We recall that for an arbitrary $\gog$-module $M$, the Fernando-Kac subalgebra of 
$\gog$ associated to $M$ is the Lie subalgebra of 
elements that act locally finitely on every vector $v\in M$.
As the Fernando-Kac subalgebra associated to 
$M_{\lambda}(\gog,\gop)$ is $\gop$, it follows that the Fernando-Kac subalgebra of
$\bar\gog$ associated to $M_{\lambda}(\gog,\gop)$ equals $i^{-1}(i(\bar{\gog})\cap \gop)$. 
Then our requirement that $\gop$ contains
the image of a Borel subalgebra of $\bar\gog$
implies the discrete decomposability of $M_{\lambda}(\gog,\gop) $ 
over $i(\bar{\gog})$ (Lemma \ref{leDiscreteDecomposability}).

Our viewpoint of the branching problem differs from that of 
\cite{Kobayashi:branching} in that we assume that $i(\bar{\gob})\subset\gop$.
If we drop the requirement $i(\bar{\gob})\subset\gop$, it appears that
there is no good understanding of 
the simple $\bar{\gog}$-modules with 
Fernando-Kac subalgebras of the form  $i^{-1}(i(\bar{\gog})\cap \gop)$. 
Even more, there appears to be
no complete understanding of the 
structure of the Lie algebra $i^{-1}(i(\bar{\gog})\cap \gop)$.
We note that if $\gop$ does not contain an image of the Borel subalgebra of $\bar\gog$, 
we can restrict our attention to a maximal reductive in $\bar\gog$ subalgebra $\bar{\gog}_1$ with the property that it 
has a Borel subalgebra whose image is contained in $\gop$. If $\bar{\gog}_1\neq \{0\}$, 
the branching problem of $M_\lambda(\gog,\gop)$ over $\bar{\gog}_1$ is well-posed, 
and the results of our paper apply to $\LieAlgPair{\bar\gog_1}{\gog}$.

Let $\gop\supset \gob$ be a parabolic subalgebra of $\gog$ and define the parabolic subalgebra 
$\bar{\gop}\subset\bar{\gog}$ by $i(\bar\gop)= i(\bar{\gog})\cap \gop$. 
Let $ M_{\lambda}(\gog,\gop)$ be the generalized Verma $\gog$-module equipped 
with $\bar{\gog}$-module structure induced by $i$. Then one can write 
$ M_{\lambda}(\gog,\gop)$  as a direct limit of certain 
$\bar{\gog}$-submodules $M_n$ 
so that the $\bar{\goh}$-character 
$\CharS M_n$ of $M_n$ decomposes as a sum of the form
\[
\CharS M_n:=\sum m_n(\mu,\lambda) \CharS(M_{\mu} (\bar{\gog},\bar\gop))\quad .
\]

Under the technical Condition A given in Definition \ref{defConditionA} below, 
we prove that the coefficients $m_n(\mu,\lambda)$ in the above expression 
have limits $m(\mu,\lambda)$ (allowing $m(\mu,\lambda)=+\infty$). 
We do not know of an example where Condition A fails and we may conjecture 
that it is a consequence of our assumption $\gop\supset \gob \supset i(\bar{\gob})$.

If Condition A holds, in Theorem \ref{thBranchingMultsViaSymmetricTensors} 
we prove that either all non-zero $m(\mu,\lambda)$ are 
simultaneously equal to $+\infty$, or they are all simultaneously finite, 
and in the latter case we give a formula computing them. For our pilot example 
$\LieAlgPair{\LieGtwo}{so(7)}$, Theorem \ref{thBranchingMultsViaSymmetricTensors} 
implies that $\gop_{(1,0,0)}$ is the only proper 
parabolic subalgebra of $so(7)$ with finite branching over $i(\LieGtwo)$, i.e., 
branching for which $m(\mu,\lambda)\neq0$ for only finitely many $\mu$. 
We note that this gives an example in which an infinite 
dimensional $\gog$-module has finite branching over a subalgebra $i(\bar \gog)$ 
of rank strictly smaller than the rank of $\gog$.

On Condition A, in Theorem \ref{thTheBranchingRuleIsQP}, we explain how to 
compute $m(\mu,\lambda)$ as a piecewise quasi-polynomial in the coordinates 
of $\mu$ and $\lambda$ and give an upper bound for the degrees of 
the piecewise quasi-polynomials. In our pilot example $\LieAlgPair{\LieGtwo} {so(7)}$, 
the degree in question is 1. 

We prove that Condition A holds for all parabolic subalgebras $\gop$ 
weakly compatible with $i(\bar{\gog}) $. In particular, Condition A holds 
for a parabolic subalgebra $\gop$ compatible with $i({\bar\gog})$ 
in the sense of \cite[Section 3]{Kobayashi:branching}. 
As we have assumed that $i(\bar{\gob})\subset \gob\subset \gop$, it follows that
all parabolic subalgebras $\gop$ of $so(7)$ are weakly compatible 
with $i(\LieGtwo)$ (of them only 4 are strongly compatible, 
Corollary \ref{corParabolicsB3weaklyCompatibleWithG2}), 
and therefore Condition A holds for all parabolic subalgebras 
relative to the pair $\LieAlgPair{\LieGtwo}{so(7)}$. 

In Section \ref{secConstructingSingularVectors} we discuss the problem of 
explicitly constructing $\bar{\gob}$-singular vectors in $M_\lambda(\gog,\gop)$.
Under additional assumptions given in Theorem \ref{thSufficientlyGenericWeightEnablesFDbranching}, 
we use the Harish-Chandra isomorphism theorem and the corresponding
elements in the center of $U(\bar{\gog})$ to find a certain a set of 
singular vectors that realize the ``top level'' (see \eqref{eqTopLevel}) of the branching problem. 
For a fixed dimension of the inducing finite dimensional $\gop$-module,
our assumptions on $\lambda$ exclude a certain Zariski-closed subset of $\goh^*$, 
however allowing the Zariski-closed subset to be the entire $\goh^*$. 
Corollary \ref{corGenVermaDecompo} gives a sufficient criteria for $M_\lambda(\gog,\gop)$ to 
decompose as a direct sum of (in general, reducible)
generalized Verma $\bar{\gog}$-modules.

In our pilot example $\LieAlgPair{\LieGtwo}{so(7)}$, in the case of the 
parabolic subalgebra $\gop_{(1,0,0)}$, the technical assumptions on $\lambda$ 
exclude finitely many values for a fixed dimension of the inducing $\gop_{(1,0,0)}$-module. 
Except for these values, in Theorem \ref{leBranchingExplicit(x_1,0,0),(1,a,b)} we 
give explicit bases of the $\bar\gob$-singular vectors of 
$M_\lambda(so(7), \gop_{(1,0,0)})$ for which 
$m(\mu,\lambda)\neq0$ for the 6 one-parameter families
$\lambda=x_1\omega_1$, $x_1\omega_1+\omega_2$,  $x_1\omega_1+\omega_3$,  
$x_1\omega_1+2\omega_2$,  $x_1\omega_1+\omega_1+\omega_2$,  $x_1\omega_1+2\omega_3$.
In Corollary \ref{corGenVermaDecomposB3G2}, we prove that, except for the (explicitly computed)
exceptional values for $x_1$ from the preceding Theorem, at least 5 out of the 6 families
of $\bar\gob$-singular vectors give a decomposition of
$M_{\lambda}(so(7), \gop_{(1,0,0)})$ as a direct sum of generalized Verma modules 
$M_\mu(\LieGtwo,\bar\gop_{(1,0)})$.

\onlineVersionOnly{
In Tables \ref{tableB3fdsOverG2charsAndHWV}-\ref{tableB3fdsOverG2charsAndHWV(0, 1, 1)},
for $\gop\neq \gop_{(1,0,0)}$, we tabulate the ``top level'' sets of 
$\bar\gob$-singular vectors in $M_\lambda(so(7), \gop)$ for certain 
one or two-parameter families of the highest weights $\lambda$. 
For the finite dimensional branching $\gop\simeq so(7)$, Table \ref{tableB3fdsOverG2charsAndHWV}
gives the vectors realizing the branching of all irreducible finite dimensional 
$so(7)$-modules over $\LieGtwo$ for highest weight with fundamental 
coordinate sum less than or equal to 2.
}

A geometric motivation of the present study may be given as follows.
Let $G, \bar G$ be the connected and simply connected Lie groups with 
Lie algebras $\gog, \bar{\gog}$.
Let $P$ be the parabolic subgroup of $G$ with Lie algebra $\gop$.
Then there is a well-known
equivalence between invariant differential operators acting on induced
representations and homomorphisms of generalized Verma modules,
realized by the natural pairing
\begin{eqnarray}\label{eqInvriantDiffOpActing}
Ind^G_P(\mathbb V_\lambda(L)^*)\times M_{\lambda}(\gog,\gop)\to \mC,
\end{eqnarray}
where $\mathbb V_\lambda(L)$ denotes the finite-dimensional irreducible $L$-module 
obtained by exponentiating $V_\lambda(\gol)$, $\mathbb V_\lambda(L)^*$ is its dual, 
and $Ind^G_P$ denotes induction. As a consequence, the singular vectors
constructed in the article induce invariant differential operators
acting between induced representations of $j(\bar{G})$.
It is quite interesting to construct these invariant differential
operators, in particular their curved extensions as lifts
to homomorphisms of corresponding semiholonomic generalized Verma modules.
More information on the relationship between conformal geometry in
dimension $5$ and filtered structure on the tangent space (i.e., a distribution) 
related to exceptional Lie group $G_2$, 
can be found in \cite{GrahamWillse:ParallelTractor} and references therein.

We end this section by proposing a series of 
(open to our best knowledge) sub-problems that 
could eventually lead to a solution of the branching problem. 
The central $\bar{\gog}$-characters 
(the constants by which the center of $U(\bar\gog)$ acts on cyclic $\bar\gog$-modules) 
that appear in $\bar\gog$-submodules of $M_\lambda(\gog,\gop)$ 
that afford a central character
are  exhausted by the central $\bar\gog$-characters
of $M_\mu(\bar\gog,\bar{\gop})$ for which $m(\mu,\lambda)>0$. 
Therefore, one needs to first find 
all $\bar\gob$-singular vectors $v_\mu$ of $\bar \goh$-weight $\mu$
in $M_\lambda(\gog,\gop)$, for which $m(\mu,\lambda)>0$.
In other words, one needs to extend the results of the current paper 
beyond the ``top-level'' 
branching given in Theorem \ref{thSufficientlyGenericWeightEnablesFDbranching}, and 
in addition explore what happens when the Zariski-open Condition B fails.
To achieve this task, it appears most natural to  use iterated translation functors. 
Here, by translation functors 
we broadly mean the decomposition into indecomposables of the $\bar\gog$-module 
$M_\mu(\bar{\gog}, \bar{\gop})\otimes \gog/\bar{\gog}$ 
as a function of the coordinates of the highest weight $\mu$.

Next, one must solve a problem involving a single central character block in the Bernstein-Gelfand-Gelfand category 
$\mathcal O^{\bar{\gop}}$ - namely, find all $ \bar\gob$-singular vectors of 
$M_{\mu}(\bar{\gog},\bar{\gop})$.
In general, $M_\mu(\bar{\gog},\bar\gop)$ has $\bar{\gob}$-singular vectors other than the highest one, 
whose weight is of the form $w(\mu+\rho_{\bar{\gog}})-\rho_{\bar{\gog}}$, 
where $w$ is in the Weyl group of $\bar{\gog}$.
In the case of $\bar{\gop}=\bar\gob$, the multiplicities of these vectors 
are 0 or 1 by \cite[Chapter 7]{Dixmier}; for a general $\bar\gop$ one could
use Kazhdan-Lusztig polynomials to compute the multiplicities in question.
Next, one should compute the $\bar\gob$-singular vectors in $M_\mu(\bar{\gog},\bar\gop)$  explicitly.
For $\bar{\gop}=\bar\gob$, an algorithm for this has been described in 
\cite[Chapter 4]{HumphreysNewBook}.
For a general $\bar\gop$, one can use the approach of analysis of 
generalized Verma modules developed in \cite{koss}, 
based on distribution Fourier transform. 
Here the analysis should be a lot more involved as one must account 
both standard and non-standard
generalized Verma module homomorphisms (see \cite{Lepowski:GeneralizationBGG}).
Non-standard homomorphisms have been classified only 
for parabolic subalgebras with commutative nilradicals (\cite{Boe}) and maximal
parabolic subalgebras (\cite{Mat}), both cases just for scalar generalized Verma modules. 

In addition to the branching of the 
generalized Verma module $M_\mu(\bar{\gog},\bar{\gop})$ over $\bar{\gog}$,
one has to solve one final problem arising from the embedding $\LieAlgPair{\bar{\gog}}{\gog} $.
More precisely, given a $\bar\gob$-singular vector $v_\mu$ of $\bar{\goh}$-weight $\mu$ in $M_\lambda(\gog,\gop)$, 
one needs to decide whether $v_\mu$ generates a $\bar\gog$-submodule isomorphic to  $M_{\mu}(\bar\gog,\bar\gop)$, 
or a proper quotient module (if the latter is the case, 
one also needs tools to understand which quotients are generated).


\section{Notation and preliminaries}\label{secNotationPreliminaries}

We fix the base field to be $\mathbb C$. For an arbitrary Lie algebra $\got$ we denote by $U(\got)$  
its universal enveloping 
algebra and by $\ad$ we denote the adjoint action in $\got$ 
and $U(\got)$. Given a $\got$-module $M$, we say that a vector $v\in M$ is 
$\got$-singular if the one-dimensional vector space $\mathbb C v $ is 
preserved by $\got$ (i.e., if $\mathbb C v$ is a one-dimensional 
$\got$-submodule of $M$). 

For a semisimple Lie algebra $\gog$, the quadratic Casimir element $c_1\in U(\gog)$ 
is defined as $\sum_i e_i f_i $, 
where $\{e_i\}, \{f_i\} $ are two bases of $\gog$, dual with respect to the 
Killing form $K(e,f):= \Tr(\ad e \circ \ad f)$. 

Let $\gog$ and $\bar \gog$ be reductive Lie algebras. 
\onlineVersionOnly{
We recall that a subalgebra 
$\gog'$ of $\gog$ is called reductive in $\gog$ if any irreducible finite-dimensional 
$\gog$-module is a semisimple $\gog'$-module (i.e., decomposes as a direct sum 
of irreducible $\gog'$-modules). 
}
Let $i$ be an embedding $\LieAlgPair{\bar \gog}{\gog}$ 
such that $ i(\bar\gog)$ is reductive in $\gog$. Let $\bar \goh$ be a 
Cartan subalgebra of $\bar \gog$. Since the base field is $\mathbb C$ 
and $i(\bar\gog)$ is reductive in $\gog$, we can extend $i(\bar\goh)$ 
to a Cartan subalgebra $\goh\supset i(\bar\goh)$ of $\gog$.

We denote by $\Delta(\gog)$ the root system of 
$\gog$ with respect to $ \goh$ and by 
$\Delta(\bar \gog)$ the root system of $\bar\gog$ with respect to $\bar\goh$.
Then $\gog$ and $\bar\gog$ have the vector space decompositions
\[
\gog=\goh \oplus\bigoplus_{\beta\in \Delta( \gog)}\gog_\beta, \quad \quad 
\bar\gog=\bar\goh\oplus \bigoplus_{\alpha\in \Delta(\bar \gog)}\bar\gog_\alpha\quad ,
\]
where  $\gog_\beta$ (respectively, $\bar \gog_\alpha$) are the $\Delta(\gog)$- 
(respectively, $\Delta(\bar\gog)$-) root spaces. 
The Killing form on $\gog$ (respectively $\bar \gog$) 
induces a non-degenerate  symmetric bilinear form on $\goh^*$ (respectively $\bar \goh^* $); 
we rescale this form on each simple component of $\gog$ (respectively, $[\bar \gog, \bar \gog]$) 
so that the long roots have length $\sqrt{2}$ for types $A_n$, $B_n$, $D_n$, $E_6, E_7, E_8$, 
length 2 for types $C_n$ and $F_4$, and length $\sqrt{6}$ for type $G_2$. 
We denote the so obtained bilinear form on $\goh^*$ (respectively, $\bar \goh^*$) 
by $\scalarLA{\bullet}{\bullet} $ (respectively, $\scalarSA{\bullet}{\bullet}$). 
For any weight $\beta\in \goh^*$ (respectively $\alpha\in \bar\goh^*$) we define 
$ h_\beta\in \goh$ (respectively $\bar h_\alpha\in \bar \goh$) to be the dual 
element of $\beta$, i.e., we require that 
$[h_\beta,  g_\gamma]= \scalarLA{\beta}{\gamma} g_\gamma$ 
(respectively  $ [\bar h_\alpha, \bar g_\gamma]=\scalarSA{\alpha}{\gamma} \bar g_\gamma$ ) 
for all $ g_\gamma\in \gog_\gamma$ (respectively  $\bar g_\gamma\in \bar \gog_\gamma$).

We define an embedding
\[
\iota: \bar\goh^*\hookrightarrow \goh^*
\] 
by requiring 
$$
[ i(\bar h_{\gamma}), g_{\beta}]=\scalarLA{\iota(\gamma)}{\beta } g_{\beta}
$$ 
for all $\beta\in\Delta(\gog)$ and all $\gamma\in\bar \goh^*$. 
\onlineVersionOnly{
There exists a positive integer $D$ such that  
$
D\scalarSA{\alpha_1}{\alpha_2}=\scalarLA{\iota(\alpha_1)}{\iota(\alpha_2)} 
$
for all $\alpha_1, \alpha_2\in \goh^*$; the integer $D$ is called 
Dynkin index of the embedding $\LieAlgPair{\bar{\gog}}{\gog}$, \cite{Dynkin:Semisimple}.
} Let \[\pr:\mathfrak h^*\to \bar\goh^*\] 
denote the canonical projection map naturally induced by $i$.

\onlineVersionOnly{
 We say that a $\gog$-module  $M$ is a $\goh$-weight module if it has a vector space decomposition 
into countably many one-dimensional  $\goh$-submodules. 
All $\gog$-modules discussed in the current paper 
are $\goh$-weight modules, in particular so is every finite dimensional completely 
reducible $\gog$-module as well as every generalized Verma $\gog$-module. 
}
Given a  a $\goh$-weight $\gog$-module $M$, denote by $\weights_\goh(M)$ the ordered
$(\dim M)$-tuple of elements of $\goh^*$ consisting 
of the $\goh$-weights of $M$ with multiplicities. 
\onlineVersionOnly{
In the case that $\dim M<\infty$, we order weights in increasing graded lexicographic 
order with respect to coordinates given in a simple basis of the ambient Lie algebra. 
}
Define $\supp_\goh M$ to be the set of $\goh$-weights of $M$. 

Given a $\gog$-module $M$ with action $\cdot$, we equip $M$ with a $\bar\gog$-module 
structure by requesting that an element $\bar g\in \bar \gog$ act on an element $m\in M$ 
by $ i(\bar g)\cdot m $. In particular, $\gog$ is a $\bar \gog$ module under 
the $\ad\circ i$ action of $\bar\gog$.
Given a $\gog$-module $M$ which as a $\bar\gog$-module is a $\bar{\goh}$-weight module, 
the definitions of $\bar\goh$ and $\goh$ imply the following relation:
\[
\beta\in \weights_{\goh} M \Rightarrow \pr \beta\in \weights_{\bar{\goh}} M\quad .
\]

For each $\alpha\in \Delta(\bar\gog)$ and $\beta\in \Delta(\gog)$, 
we fix a Chevalley-Weyl basis $\bar g_\alpha\in \bar \gog_\alpha$ and $g_\beta\in \gog_\beta$. 
\offlineVersionOnly{ 
In a Chevalley-Weyl basis
the structure constants of $\gog$ and  $\bar{\gog}$ are integral; 
their explicit values will be needed for our pilot example $\LieAlgPair{\LieGtwo}{so(7)}$. 
} 
\onlineVersionOnly{
In a Chevalley-Weyl basis we have that there exist integer structure constants
$\bar n_{\alpha_1,\alpha_2}\in \mathbb Z$, $n_{\beta_1,\beta_2}\in \mathbb Z$, 
for which
\begin{equation}\label{eqChevalley-WeylBasis}
\begin{array}{lcllcl}~
[\bar g_{\alpha_1},\bar g_{\alpha_2}]&=&\bar n_{\alpha_1,\alpha_2}\bar g_{\alpha_1+\alpha_2}, &  [g_{\beta_1},g_{\beta_2}]&=&n_{\beta_1,\beta_2}g_{\beta_1+\beta_2}  \\~
[\bar g_{\alpha_1},\bar g_{-\alpha_1}]&=&\frac{2}{\scalarSA{\alpha_1}{\alpha_1}} \bar h_{\alpha_1}, & 
[ g_{\beta_1}, g_{-\beta_1}]&=&\frac{2}{\scalarLA{\beta_1}{\beta_1}} h_{\beta_1}\quad ,
\end{array}
\end{equation}
where we define $\bar g_{\mu}:=0$, $g_\mu:=0$ whenever $\mu$ is not a root 
of the respective root system. 
We note that the relations (\ref{eqChevalley-WeylBasis}) imply that the elements 
$[g_{\alpha}, g_{-\alpha}], g_\alpha, g_{-\alpha}$ form a standard $h,e,f$-triple 
parametrization of the $sl(2)$-subalgebra they generate.
} 

Let $\bar \gob \supset \bar\goh$ be a Borel subalgebra of $\bar\gog$. 
As the base field is $\mathbb C$, $i(\bar{\gob})$ can be extended to a Borel 
subalgebra $\gob\supset\goh$ of $\gog$. In the present paper we 
assume one such choice of Borel subalgebras $\bar\gob \supset\bar \goh $, 
$\gob\supset \goh\supset i(\bar{\goh})$, $i(\bar{\gob})\subset \gob$ to be fixed. 
All parabolic subalgebras $\gop$ of $\gog$ are assumed to contain $\gob$, 
and similarly, all parabolic subalgebras $\bar\gop$ of $\bar{\gog}$ are assumed to contain $\bar \gob$.

We set $\gon_-$ to be opposite nilradical of $\gon$, i.e., the Lie 
subalgebra generated by the root spaces opposite to the root spaces lying in $\gon$. 
Set $\bar{\gop}$ to be the preimage of the intersection of $\gop$ with $i(\bar\gog)$, i.e., set 
\[
\bar \gop:= i^{-1} (i(\bar\gog)\cap \gop)\quad .
\]
As $\bar \gop$ contains $\bar{\gob}$, it is a parabolic subalgebra of $\bar \gog$. 
Set $\bar \gon$ to be the nilradical of $\bar \gop$ and $\bar\gon_-$ to be its opposite nilradical.

If $\gog$ is of rank $n$, we parametrize the subsets of its positive 
simple roots by $n$-tuples of $0$'s and $1$'s. A parabolic subalgebra 
$\gop$ containing $\gob$ is parametrized by indicating which positive 
simple roots of $\gog$ are weights of $\gon$ (those roots are also called 
``crossed roots''). For example, the parabolic subalgebra $\gop_{(1,0,0)}$ 
of $so(7)$ stands for the parabolic subalgebra in which the first simple 
root is crossed out. 

Define $ \gol$ to be the reductive Levi part of $ \gop$ that contains 
$\goh$ and similarly define $\bar{\gol}$ to be the reductive part of $\bar{\gop}$ 
that contains $\bar{\goh}$. The fact that $i(\bar \goh)\subset \goh$ implies that 
$i(\bar{\gol})=\gol\cap  i(\bar\gog)$. As $i(\bar\gol)\subset \gol$, it follows 
that $i(\bar \gog)\cap\gon$ is a $\bar \gol$-submodule. We claim that $\gon_-$ 
splits as the direct sum of the $\bar \gol$-module $(\gon_-\cap i(\bar\gog))$ 
and its complement submodule isomorphic to $\gon_-/(\gon_-\cap i(\bar\gog))$ - 
indeed, this follows as $\bar \gol $ is reductive in $\bar \gog$, which is by 
definition reductive in $\gog$.

We set $\gos:=[\gol,\gol]$.  
\onlineVersionOnly{
We say that a weight $\lambda\in \goh^*$ is integral with respect 
to $\gos$ and  dominant with respect to $\gos\cap\gob$  if  $\scalarLA{\lambda}{\beta}$ 
is a positive integer for all positive roots $\beta$ of $\gol$. 
}
Given a weight $\lambda\in \goh^*$ that is integral with respect 
to $\gos$ and dominant with respect to $\gos\cap\gob$, we denote by $V_\lambda (\gol)$ 
the irreducible finite dimensional $\gol$-module of highest weight 
$\lambda$.  $V_\lambda (\gol)$ can be regarded as a $\gop$-module with trivial action of $\gon$. We 
note that as a partial case of our notation, for $\gop\simeq \gog$, 
$V_\lambda(\gog)$ denotes the irreducible finite 
dimensional $\gog$-module of highest weight $\lambda$. In a similar fashion 
we define $\bar{\gos}:=[\bar\gol,\bar{\gol}]$, the irreducible $\bar{\gop}$-module 
$V_\mu(\bar{\gol})$ and the irreducible $\bar{\gog}$-module $V_\mu(\bar{\gog})$.

In order to compute in $V_\lambda(\gog)$, 
for an arbitrary reductive Lie algebra $\gog$,
we need to realize the following steps.
\begin{enumerate}
\item Produce a set of words $u_1,\dots, u_k\in U(\gog)$ such that 
$u_1\cdot v_\lambda,\dots, u_k\cdot v_\lambda$ give a basis of 
$V_\lambda(\gog)$. Let $m_i:=u_i\cdot v_\lambda$. 
\item For each simple generator $g_\alpha\in \gog$, compute the matrix of the action 
of $g_\alpha$ on the $m_i$'s, i.e., compute the numbers $b_{is}$ for which $g_\alpha\cdot m_i=\sum_s b_{is}m_s$.
\end{enumerate}
In order to deal with 1), we use a non-commutative 
monomial basis of $V_\lambda(\gog)$ as described in 
\cite[\S 1]{Littelmann:ConesCrystalsPatterns}. 
\onlineVersionOnly{
More precisely, the elements $u_i$ are chosen to be
non-commutative (non-PBW ordered) monomials in the simple Chevalley-Weyl 
generators, whose exponents are given by the set of adapted strings $S^{\lambda}_w$ 
described in the 
Definition after \cite[Lemma 1.3]{Littelmann:ConesCrystalsPatterns}.
} 
In order to solve 2), we use the Shapovalov form, 
\cite{Shapovalov}, \cite{JacksonMilev:ShapovalovForm}.
The algorithm we used for solving 1) and 2) is described in 
\cite{JacksonMilev:ShapovalovForm}. The 
explicit computations in Section \ref{secG2inB3} were carried out with the help of a C++ 
program written for the purpose, \cite{Milev:vpf}, 
and we omit all computation details.

For $\lambda\in \goh^*$ integral with respect to $\gos$ and dominant with respect to $\gos\cap\gob$, define 
\[
M_\lambda(\gog,\gop):=U(\gog)\otimes_{U(\gop)} V_\lambda(\gol)
\] 
to  be the generalized Verma $\gog$-module induced from $V_\lambda(\gol)$. 
Similarly define $M_\mu(\bar \gog,\bar \gop):=U(\bar \gog)\otimes_{U(\bar \gop)} V_\mu(\bar \gol)$ 
to be the generalized Verma $\bar{\gog}$-module induced from $V_\lambda(\bar\gol)$.
We have the vector space isomorphism
\begin{equation}\label{eqGenVermaIsUnTimesV}
M_\lambda(\gog,\gop) \simeq U(\gon_-) \otimes V_{\lambda}(\gol)\quad .
\end{equation}

\onlineVersionOnly{ Given a vector space $V$, by 
$S^\star(V):=\bigoplus_{i=0}^\infty S^i(V)$ we denote the 
symmetric tensor algebra of $V$, where $S^i(V)$ is generated over $\mC$
by monomials of the form $\sum_{\sigma\in S_i} v_{\sigma(1)} \otimes\dots \otimes v_{\sigma(i)}$. 
}

\offlineVersionOnly{ We denote by $S^\star(V)$ the symmetric tensor algebra of $V$.
}

\onlineVersionOnly{ We now fix notation for the Weyl character formula. 
}
We denote the Weyl group of a reductive Lie algebra $\got$ by $\mathbf W(\got)$.
To each element $\beta\in \goh^*$ we assign the 
formal exponent $x^{\beta}$ with multiplication $x^{\beta_1}x^{\beta_2}=x^{\beta_2+\beta_2}$ and denote 
by $\mathbb Q[x^\gamma]_{\gamma\in \goh^*}$  the commutative associative $\mathbb Q$-algebra generated 
by all such elements with no further relations. 
We define $\mathbb Q[[x^\gamma]]_{\gamma\in \goh^*}$  as 
the vector space generated by formal infinite $\mathbb Q$-linear 
combinations of the monomials $x^\alpha$.
Similarly, to each element $\alpha\in\bar \goh^*$ we assign 
the formal exponent $y^{\alpha}$ with multiplication 
$y^{\alpha_1} y^{\alpha_2}=y^{\alpha_1+\alpha_2}$ and denote 
by $\mathbb Q[y^\gamma]_{\gamma\in \bar \goh^*}$ 
the commutative associative $\mathbb Q$-algebra generated by all 
such elements with no further relations.

We define the $\mathbb Q$-algebra homomorphism 
\begin{eqnarray}
 \Pr: \mathbb Q[x^\gamma]_{\gamma\in \goh^*}&\to &\mathbb Q[y^\gamma]_{\gamma\in \bar \goh^*}\quad ,
\nonumber \\
 x^{\gamma} &\mapsto & y^{\pr(\gamma)}\quad .
\end{eqnarray}

Given a weight $\gog$-module $M$ with finite-dimensional 
$\goh$-weight spaces $M({\beta})$, the character of 
$\CharL M\in \mathbb Q[[x^\gamma]]_{\gamma\in \goh^*}$ is defined via 
\[
\CharL M:=\sum_{\beta\in \weights_{\goh} M} x^{\beta} = \sum_{\beta\in \supp_{\goh} M} \dim M({\beta})  x^{\beta}\quad    .
\]
Similarly, for a $\bar \gog$-module $N$ we define $\CharS$ 
to be the character with respect to $\bar\goh$, 
\[
\CharS N :=\sum_{\alpha\in \weights_{\bar \goh} N} y^\alpha\quad .
\]

\begin{definition}
Let $a:=\sum_{\gamma\in \goh^*}a(\gamma)x^\gamma\in  \mathbb Q[[x^\gamma]]_{\gamma\in\goh^*}$. 
We say that $a$ is $\pr$-finite if for any weight $\delta \in \bar\goh^*$ the set 
$\{\gamma\in \goh^*|\pr(\gamma)=\delta \mathrm{~and~} a(\gamma)\neq 0\}$ is finite for every $\delta$. 
\end{definition}
The set of $\pr$-finite elements $\PrFin$ of $\mathbb Q[[x^\gamma]]_{\gamma\in\goh^*}$ 
is a vector subspace of $\mathbb Q[[x^\gamma]]_{\gamma\in\goh^*}$ 
containing $\mathbb Q[x^\gamma]_{\gamma\in\goh^*}$. We can extend the 
map $\Pr:  \mathbb Q[x^\gamma]_{\gamma\in \goh^*}\to \mathbb Q[y^\gamma]_{\gamma\in \bar \goh^*}$ 
to a map $\Pr:\PrFin\to \mathbb Q[[y^\gamma]]_{\gamma\in \bar \goh^*}$ via
\begin{equation}\label{eqApplyPr} 
\Pr(a) :=\sum_{\delta\in \bar\goh} \left(\sum_{\pr(\gamma)=\delta}a(\gamma) \right)x^{\delta}\quad .
\end{equation}
We note that if $\CharL(M)$ is $\pr$-finite, then we have the equality
\begin{equation}\label{eqApplyPrOnChars} 
\Pr(\CharL (M)) =\CharS (M)\quad .
\end{equation}


\section[Characters and branching of generalized Verma modules]
{Character formulas and branching laws for generalized Verma modules}\label{secWeylCharacterFormulas}

We recall that a finitely generated $\bar{\gog}$-module is of finite length 
if it has finite Jordan-H\"older composition series over $\bar{\gog}$. 
We recall the following definition, \cite{Kobayashi:branching}.
\begin{definition}\label{defDiscretelyDecomposable}
A $\bar\gog$-module $M$ is discretely decomposable if there is an increasing filtration 
$M_m$ by $\bar\gog$-submodules of finite length such that
$\bigcup_n M_n=M$. Let $\bar\gop$ be a parabolic subalgebra of $\bar\gog$. We say that $M$ is 
discretely decomposable in Bernstein-Gelfand-Gelfand parabolic category $\mathcal O^{\bar\gop}$ 
if in addition all modules 
$M_n$ from the filtration can be chosen to lie in $\mathcal O^{\bar\gop}$.
\end{definition}

Definition \ref{defDiscretelyDecomposable} implies the following.
\begin{lemma}\label{leDiscreteDecomposability}
The generalized Verma module $M_\lambda(\gog,\gop)$ is discretely decomposable as a $\bar \gog$-module in the category $\mathcal O^{\bar\gop}$.
\end{lemma}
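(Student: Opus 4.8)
The plan is to exhibit an explicit increasing filtration of $M_\lambda(\gog,\gop)$ by finitely generated $\bar\gog$-submodules, each of which lies in $\mathcal O^{\bar\gop}$ and is therefore of finite length, and whose union is all of $M_\lambda(\gog,\gop)$. By the vector space isomorphism \eqref{eqGenVermaIsUnTimesV}, $M_\lambda(\gog,\gop)\simeq U(\gon_-)\otimes V_\lambda(\gol)$ is countable-dimensional, so I would fix a countable basis $w_1,w_2,\dots$ consisting of $\goh$-weight vectors (hence $\bar\goh$-weight vectors, since $i(\bar\goh)\subset\goh$) and set $M_n:=U(\bar\gog)\cdot\linspan\{w_1,\dots,w_n\}$. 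These are $\bar\gog$-submodules with $M_n\subseteq M_{n+1}$ and $\bigcup_n M_n=M_\lambda(\gog,\gop)$, so by Definition \ref{defDiscretelyDecomposable} it remains only to prove that each $M_n$ is of finite length and lies in $\mathcal O^{\bar\gop}$.

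The one genuinely substantial input is the local finiteness of the $\bar\gop$-action. As recalled in the introduction, the Fernando-Kac subalgebra of $\gog$ attached to $M_\lambda(\gog,\gop)$ is $\gop$; equivalently $U(\gop)v$ is finite-dimensional for every $v$. Since by definition $i(\bar\gop)=i(\bar\gog)\cap\gop\subseteq\gop$, the action of $U(\bar\gop)$ on any $v$ factors through $U(\gop)$, so $U(\bar\gop)v$ is finite-dimensional as well; that is, $\bar\gop$ — and in particular its Levi part $\bar\gol$ and its nilradical $\bar\gon$ — acts locally finitely on $M_\lambda(\gog,\gop)$. This is precisely the point at which the standing hypothesis $i(\bar\gob)\subseteq\gob\subseteq\gop$ enters.

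With this in hand I would verify that each $M_n$ satisfies the defining conditions of $\mathcal O^{\bar\gop}$. It is finitely generated over $U(\bar\gog)$ by construction and $\bar\goh$-semisimple as a restriction of the $\goh$-weight decomposition. To see that its $\bar\goh$-weight spaces are finite-dimensional, observe that local $\bar\gop$-finiteness makes each $W_j:=U(\bar\gop)w_j$ a finite-dimensional $\bar\gop$-module, and the PBW factorization $U(\bar\gog)=U(\bar\gon_-)U(\bar\gop)$ exhibits $M_n$ as a quotient of $\bigoplus_{j\le n} U(\bar\gon_-)\otimes W_j$; since $\bar\gon_-$ is finite-dimensional, $U(\bar\gon_-)$ has finite-dimensional weight spaces, and hence so does $M_n$. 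Local $\bar\gop$-finiteness likewise yields local $\bar\gon$-finiteness and local $\bar\gol$-finiteness, and because $\bar\gol$ is reductive with its center contained in $\bar\goh$, which acts semisimply, every finite-dimensional $\bar\gol$-submodule is semisimple, so $M_n$ is $\bar\gol$-semisimple. Thus $M_n\in\mathcal O^{\bar\gop}$, and since every object of $\mathcal O^{\bar\gop}$ has finite Jordan-H\"older length, the filtration $(M_n)$ witnesses the discrete decomposability of $M_\lambda(\gog,\gop)$ in $\mathcal O^{\bar\gop}$. The only real obstacle is the local $\bar\gop$-finiteness of the middle paragraph; once that is granted, the remaining verifications are the routine structural properties of parabolic category $\mathcal O$.
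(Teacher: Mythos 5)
Your proof is correct and takes essentially the same approach as the paper: the paper sets $M_n:=U(\bar\gog)\cdot(U_n\cdot v)$ with $U_n$ the degree filtration of $U(\gog)$ applied to the $\gob$-highest weight vector, while you apply $U(\bar\gog)$ to initial segments of a weight-vector basis --- in both cases an exhausting increasing chain of finitely generated $\bar\gog$-submodules that are then shown to lie in $\mathcal O^{\bar\gop}$. The paper dismisses that last step as a ``straightforward check''; your middle paragraphs (local $\bar\gop$-finiteness from the Fernando-Kac fact, finite-dimensional $\bar\goh$-weight spaces via the factorization $U(\bar\gog)=U(\bar\gon_-)U(\bar\gop)$, and $\bar\gol$-semisimplicity) simply supply the details it omits, so the two arguments differ only in bookkeeping.
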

\begin{proof}
Let $v$ be the $\gob$-highest weight vector of $M_\lambda(\gog,\gop)$. Let 
$\gog\simeq U_1\subset U_2\subset\dots\subset U_n\subset\dots$ be the standard filtration 
of $U(\gog)$ by degree. Let $M_n$ be the $\bar\gog$-module generated by the vector space 
$U_n\cdot v$. It is a straightforward check that $M_n$ belongs to category $\mathcal O^{\bar\gop}$. 
The so constructed modules $M_n$ provide the filtration required by Definition 
\ref{defDiscretelyDecomposable}.
\end{proof}

In \cite[Section 3]{Kobayashi:branching}, the branching laws of 
$M_\lambda(\gog,\gop)$ over $\bar\gog$ are explored under the additional 
assumption that $\gop$ is compatible with $i(\bar\gog)$, i.e., that $\gop$ can be defined using a 
hyperbolic grading element that lies in $i(\bar\goh)$. If $\gop$ is a parabolic subalgebra compatible with $i(\bar\gog)$ 
we have that $i(\bar\gon)  \subset \gon$. Without the assumption that $\gop$ is $i(\bar\gog)$-compatible this no longer has 
to hold, see Lemma \ref{leStructureB3overG2}.3.(b),(c). Nevertheless, 
the following Lemma holds.
\begin{lemma}\label{leLargeNilradicalHasLsubmoduleIsoToSmallerNilradical}
~
\begin{enumerate}
\item
The map $\pr$ maps $\supp_{\goh} \gon$ surjectively onto $\supp_{\bar \goh} \bar\gon$.
\item
$\gon$ has an $\bar \gol$-submodule isomorphic to $\bar \gon$.
\end{enumerate}
Both claims of the Lemma remain true if we interchange $\gon$ with $\gon_-$ and $\bar\gon$ with $\bar \gon_-$.
\end{lemma}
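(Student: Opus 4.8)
The plan is to exploit that, since $i(\bar\gol)\subseteq\gol$, each of $\gol$, $\gon$ and $\gon_-$ is stable under the $\ad\circ i$ action of $\bar\gol$ and is therefore a $\bar\gol$-module, and to combine this with the two defining identities $i(\bar\gop)=i(\bar\gog)\cap\gop$ and $i(\bar\gol)=\gol\cap i(\bar\gog)$. The one fact I will invoke repeatedly is that a vector of $i(\bar\goh)$-weight $\mu$ decomposes into $\goh$-weight spaces $\gog_\beta$ with $\pr(\beta)=\mu$, since $\pr$ is precisely restriction of weights from $\goh$ to $i(\bar\goh)$. I will give a direct argument for part 1 and then exhibit an explicit injective $\bar\gol$-module map for part 2; the only genuinely delicate point is the passage to the opposite nilradical, which I treat last.

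For part 1, fix $\mu\in\supp_{\bar\goh}\bar\gon$, so $\bar\gog_\mu\subseteq\bar\gon\subseteq\bar\gop$ and hence $i(\bar g_\mu)\in i(\bar\gog)\cap\gop\subseteq\gop=\gol\oplus\gon$. This element cannot lie in $\gol$: otherwise $i(\bar g_\mu)\in\gol\cap i(\bar\gog)=i(\bar\gol)$, forcing $\bar g_\mu\in\bar\gol$ and contradicting $\mu\in\supp\bar\gon$. So the $\gon$-component of $i(\bar g_\mu)$ is nonzero; expanding it in $\goh$-root vectors, some $\gog_\beta\subseteq\gon$ occurs with nonzero coefficient, and since $i(\bar g_\mu)$ has pure $i(\bar\goh)$-weight $\mu$ every such $\beta$ satisfies $\pr(\beta)=\mu$. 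Thus $\mu\in\pr(\supp_\goh\gon)$, which is the asserted surjectivity (the image $\pr(\supp_\goh\gon)$ is in general strictly larger than $\supp_{\bar\goh}\bar\gon$, so ``onto'' here is understood as: every weight of $\bar\gon$ is attained).

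For part 2, let $q\colon\gop\to\gon$ be the projection with kernel $\gol$ in $\gop=\gol\oplus\gon$. Because $\ad i(\bar\gol)$ preserves both $\gol$ and $\gon$, the map $q$ is $\bar\gol$-equivariant. Since $\bar\gon\subseteq\bar\gop$ gives $i(\bar\gon)\subseteq i(\bar\gog)\cap\gop\subseteq\gop$, the composite $q\circ i\colon\bar\gon\to\gon$ is a well-defined $\bar\gol$-module homomorphism, and it is injective: if $x\in\bar\gon$ with $q(i(x))=0$ then $i(x)\in\gol$, so $i(x)\in\gol\cap i(\bar\gog)=i(\bar\gol)$ and $x\in\bar\gon\cap\bar\gol=0$. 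Hence $q(i(\bar\gon))$ is a $\bar\gol$-submodule of $\gon$ isomorphic to $\bar\gon$, as required.

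The statements for $\gon_-$ and $\bar\gon_-$ look symmetric, but here I expect the real obstacle, and the naive route fails: one would like to project $i(\bar\gon_-)$ into $\gon_-$ after showing $i(\bar\gon_-)\subseteq\gop_-$, but in the absence of compatibility this inclusion need not hold (indeed even $i(\bar\gon)\subseteq\gon$ can fail, cf.\ the remark preceding the Lemma). The fix is to test membership against $\gop$, where the clean identity $i(\bar\gog)\cap\gop=i(\bar\gop)$ is available, rather than against $\gop_-$. Concretely, using the decomposition $\gog=\gon_-\oplus\gol\oplus\gon$ associated with $\gop$, let $p_-\colon\gog\to\gon_-$ be the projection with kernel $\gol\oplus\gon=\gop$; it is $\bar\gol$-equivariant since $\ad i(\bar\gol)$ preserves each summand. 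Then $p_-\circ i\colon\bar\gon_-\to\gon_-$ is a $\bar\gol$-map, and if $p_-(i(x))=0$ for $x\in\bar\gon_-$ then $i(x)\in\gop$, whence $i(x)\in i(\bar\gog)\cap\gop=i(\bar\gop)$ and $x\in\bar\gon_-\cap\bar\gop=0$; so $p_-\circ i$ is injective and its image realizes $\bar\gon_-$ inside $\gon_-$. The corresponding weight statement follows by the argument of the second paragraph with $\gop$ in place of $\gol$: for $\mu\in\supp_{\bar\goh}\bar\gon_-$ the vector $i(\bar g_\mu)$ cannot lie in $\gop$ (else $\bar g_\mu\in\bar\gon_-\cap\bar\gop=0$), so it has a nonzero $\gon_-$-component supplying a weight $\beta\in\supp_\goh\gon_-$ with $\pr(\beta)=\mu$.
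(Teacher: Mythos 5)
Your proof is correct, and on part 2 it takes a genuinely different, more streamlined route than the paper's. For part 1 the two arguments share the same core: expand $i(\bar g_\mu)$ into $\goh$-root vectors, all of which project to $\mu$, and use $\gol\cap i(\bar\gog)=i(\bar\gol)$ to rule out that every component lies in $\gol$; your explicit appeal to $\bar\gon\subseteq\bar\gop$ and $i(\bar\gop)=i(\bar\gog)\cap\gop$ to place the surviving component in $\gon$ rather than in $\gon_-$ makes precise a step the paper leaves implicit (its claim that all $\pr$-preimages of $\mu$ lie in $I_1\sqcup I_2$, i.e.\ in $\supp_\goh\gon\cup\supp_\goh\gol$, is exactly this point). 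For part 2 the paper works weight by weight: for each $\bar\gob\cap\bar\gol$-maximal weight of $\bar\gon$ it takes a singular vector $w\in\bar\gon$, shows that the $\gon$-component $v$ of $i(w)$ is nonzero and again $\bar\gob\cap\bar\gol$-singular, and then invokes multiplicity-freeness of $\bar\gon$ as a $\bar\gol$-module to conclude that these vectors generate a copy of $\bar\gon$. Your map $q\circ i$ is that construction done globally --- the paper's $v$ is precisely $q(i(w))$ --- but packaging it as a single injective $\bar\gol$-equivariant map makes both the singular-vector verification and the multiplicity-freeness argument unnecessary, and it would survive even if $\bar\gon$ had $\bar\gol$-multiplicities. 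You also go beyond the written proof on the opposite-nilradical claim, which the paper asserts but never argues: your choice to project along $\gop$ (so that injectivity is tested against the available identity $i(\bar\gog)\cap\gop=i(\bar\gop)$) rather than along $\gol$ inside $\gol\oplus\gon_-$ is exactly what is needed, since the inclusion $i(\bar\gon_-)\subseteq\gol\oplus\gon_-$ is not known a priori, whereas $i(\bar\gon)\subseteq\gop$ is automatic. What the paper's formulation buys in exchange is the explicit coefficients $a_\beta$ of the singular vectors, on which the Remark following the Lemma (their nontrivial dependence on the embedding $i$) is based. Finally, your reading of ``surjectively onto'' as the containment $\supp_{\bar\goh}\bar\gon\subseteq\pr(\supp_\goh\gon)$ agrees with what the paper's own proof establishes and with how the Lemma is used later.
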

\noindent\begin{proof}
1. Consider a $\bar \goh$-weight $\gamma$ of $\bar \gon$. 
Then $i(\bar g_\gamma)$ is a $\mathbb C$-linear combination 
of elements of the 
weight spaces $\gog_\beta$ for which $\pr (\beta)=\gamma$. Suppose all $\pr$-preimages of $\gamma$ in $\Delta(\gog)$
are roots of $\gol$. This implies that $i(\bar \gog_{-\gamma})$ is a subspace of $\gol$, 
which implies that $\bar \gog_{\gamma}\subset \bar \gol$. Contradiction. Therefore there exists a weight 
$\beta\in\weights_{\goh}\gon$ satisfying 
$\pr(\beta)=\gamma$. The first part of the Lemma now follows from the fact that the $\bar\goh$-weights of 
$\bar \gon$ have multiplicity one (as the root spaces of $\bar\gog$ have no multiplicities).

2. Let $\mu\in \bar \goh^*$ be a $\bar\gob\cap\bar \gol$-maximal weight of $\bar \gon$ and let $w$ be a 
$\bar \gob\cap\bar\gol$-singular vector of $\bar\gon$ of weight $\mu$ under the adjoint action of $\bar\gol$ on $\bar{\gon}$. 
Then there exist complex numbers $a_{\beta}$, for $\beta\in \Delta(\gog)$ and $\pr (\beta)=\mu$, such that 
\begin{equation}\label{eqNumbersAbeta}
i(w)=\sum_{\substack{\beta\in\Delta(\gog) \\ \pr(\beta)=\mu}}a_\beta g_{\beta}\quad . 
\end{equation}
For a fixed $\mu$, let
\begin{eqnarray}
& & I_1:=\{\beta\in \supp_{\goh} (\gon) | \pr (\beta)=\mu\}\quad, 
\nonumber \\
& & I_2:=\{\beta\in \supp_{\goh} (\gol) | \pr (\beta)=\mu\}\quad.
\end{eqnarray}
The disjoint union $I_1\sqcup I_2$ equals $\{\beta\in \Delta(\gog)|\pr (\beta)=\mu\}$. 
Consider the element 
\[
v:=\sum_{{\beta\in I_1}}a_\beta g_{\beta} \quad . 
\]
We claim that $v$ is non-zero. Indeed, otherwise we would have that $i(w)\in\gol$ 
and consequently $i(w)\in i(\bar\gol)$, which is impossible. 

We claim next that $v$ is $\bar \gob \cap\bar \gol$-singular. Indeed,  let $\bar\gom$ be the nilradical of $\bar\gob$. 
As $\gon$ is a $\bar \gol$-module under the $\ad \circ i$ action (as it is a $\gol$-module), 
we have that $[i(\bar\gom\cap\bar \gol) ,v]\subset\gon$. On the other hand, 
$[ i(\bar \gom \cap\bar\gol),i(w)]=0$ and therefore 
$[i(\bar\gom\cap\bar \gol) ,v]= [i(\bar\gom\cap\bar \gol) ,v-i(w)+i(w)]=[i(\bar\gom\cap\bar \gol) ,v-i(w)]\subset \gol$, 
where the latter follows as $v-i(w)\in \gol$. 
Thus $[i(\bar\gom\cap\bar \gol) ,v]\in \gon\cap\gol=\{0\}$. The latter implies that
$v$ is $\bar\gob\cap\bar\gol$-singular as  $i(\bar{\goh})\subset{\goh}$.

For each $\bar\gob\cap\bar\gol$-maximal weight of $\bar\gon$, the preceding construction 
yields a $\bar \gob\cap\bar\gol$-singular non-zero vector of the same $\bar\goh$-weight. 
As $\bar\gon$ is multiplicity free as a $\bar\gol$-module (as it is multiplicity free 
as a $\bar\goh$-module), the construction yields one set of $\bar\gob\cap\bar\gol$-singular 
vectors under the $\ad \circ i$ action generating an $\bar\gol$-submodule of $\gon $ 
isomorphic to $\bar\gon$.
\end{proof}

\textbf{Remark. } The complex numbers $a_\beta$ in \eqref{eqNumbersAbeta} depend non-trivially on the embedding map $i$. 
It appears that this dependence is not yet fully understood. 
A discussion of the subject can be found in \cite{DeGraaf:ConstructingSSsubalgebras}.

In view of Lemma \ref{leLargeNilradicalHasLsubmoduleIsoToSmallerNilradical}, the 
following definition of the numbers $m(\mu,\lambda)$ is a straightforward 
generalization of the numbers denoted with the same letter 
in \cite[\S 3.4, (3.3)]{Kobayashi:branching}.
\begin{definition}\label{defm(lambda,mu)} 
Let $\lambda\in \goh^*$ be integral with respect to $\gos$ and dominant 
with respect to $\gos\cap\gob$. Let $N$ be any 
$\bar\gol$-submodule of $\gon_-$  
isomorphic to $\bar\gon_-$ (under the $\ad\circ i$-action). We define
\[
m(\mu,\lambda):= \dim \Hom_{\bar\gol} (V_{\mu} (\bar{\gol}), V_{\lambda}(\gol)\otimes S^\star(\gon_-/N)) \quad  .
\]
\end{definition}
As $i(\bar{\gol})$ is reductive in $ \gog$, the $\bar\gol$-module $N$ 
has a complement $\bar \gol$-submodule $Q\simeq \gon_-/N$.
As $S^n(\gon_-)$ is an $\bar\gol$-module, so is  $S^n (Q)$, and we can 
identify the $\bar\gol$-module $S^n(\gon_-/N)$ with  $S^n (Q)$.

\subsection{The cones $\mathcal C$, $\mathcal C'$ }
For an ordered tuple of weights $X\subset \bar\goh^*$, let $\Cone_{\mathbb Z_{>0}}(X)$ 
denote the set of the  possible finite $\mZ_{> 0}$-linear 
 combinations of the elements of $X$. 

The following Condition A will play a crucial role in our ability to write 
closed form character formulas for $M_{\lambda}(\gog,\gop)$.
\begin{definition}\label{defConditionA} [Condition A]
We recall that $i(\bar\gob)\subset\gob\subset\gop$, $\bar{\gop}:=i^{-1}(\gop)$ and 
that $\gon_-$ is the nilradical opposite to the nilradical of $\gop$.
Let $N$ be as in Definition \ref{defm(lambda,mu)}. 
Define the cones $\mathcal C\subset \mathcal  C'$ by
$$
\begin{array}{rcl}
\mathcal C&:=&\Cone_{\mathbb Z_{>0}}(\weights_{\bar\goh} (\gon_-/N))\\
\mathcal C'&:=& \Cone_{\mathbb Z_{>0}}(\weights_{\bar\goh} (\gon_-)) \quad .\\
\end{array}
$$
We say that $\gop$ satisfies Condition A if exactly one of the two holds.
\begin{enumerate}
\item $0\in \mathcal C$.
\item $0\notin \mathcal C'$.
\end{enumerate}
\end{definition}
The failure of Condition A is equivalent to $0\in \mathcal C'$ and $0\notin \mathcal C$. 
We immediately note that we don't have an example 
for which Condition A fails, and we may conjecture that it always holds.
The $\bar{\gol}$-module $N$ is not uniquely determined,
but its $\bar{\goh}$-weights are (they equal the $\bar{\goh}$-weights of $\bar\gon_-$). 
Consequently the weights of $\gon_-/N$ are obtained by removing the 
$\bar\goh$-weights of $\bar\gon_-$ from the $\bar\goh$-weights of 
$\gon_-$ accounting for multiplicities; thus the cone $\mathcal C$ 
does not depend on the choice of $N$.

\textbf{Example.} As an example of  $0\notin \mathcal C'$, 
we may pick $\gop\simeq \gog$ to be the full parabolic subalgebra 
and $\bar\gog$ to be any reductive subalgebra (then $0\notin C'=\emptyset$). 
As an example of $0\in\mathcal C'$, we can choose subalgebra $i(\bar\gog)$ such that a 
root $\beta$ of $\gog$ projects to the zero $\bar{\goh}$-weight, and chose the parabolic subalgebra $\gop$
such that $\beta$ is not a root of its Levi part. 
An example for which $0\in\mathcal C'$ but none of the roots of 
$\gog$ projects to zero $\bar{\goh}$-weight can be given as follows. Pick $\gog\simeq so(2n+2)$; 
its root system can be given
in $\varepsilon$-notation as $\Delta(so(2n+1)):=\{\pm\varepsilon_i\pm\varepsilon_j | 1\leq i<j\leq n+1 \}$. 
Declare the roots $\{\varepsilon_i\pm\varepsilon_j| i<j\}$ to be positive, and define $\gob$ accordingly.
Pick $i(\bar\gog)$ to be the subalgebra generated by the root spaces 
$\{g_{\pm\varepsilon_i\pm \varepsilon_j}| 2\leq i<j\leq n+1\}$; this subalgebra is isomorphic to $so(2n)$. 
Pick $\gop\supset \gob$ to be any parabolic subalgebra 
for which the root $\varepsilon_1-\varepsilon_2$ is ``crossed out'', i.e.,  
$\varepsilon_1-\varepsilon_2$ is not a root of $\gop$.
Then the root spaces $g_{-\varepsilon_1-\varepsilon_2}$ and $g_{-\varepsilon_1+\varepsilon_2}$  belong to $\gon_-$.
Therefore the weight $0=\pr(-2\varepsilon_1)=\pr(-\varepsilon_1-\varepsilon_2+(-\varepsilon_1+\varepsilon_2))$ 
belongs to $\mathcal C'$, however none of the roots of $so(2n+2)$ projects to zero. 
Another example of $0\in \mathcal C'$ can be given by $\gog\simeq sl(3)$ and
$\bar{\gog}\simeq sl(2)$, with $i( sl(2))$ 
equal to the regular $sl(2)$-subalgebra of $sl(3)$ and $\gop\simeq\gob$. 
However, if we pick $i(sl(2))$ as the principal 
subalgebra of $sl(3)$, then our prerequisite requirement that $i(\bar{\goh})\subset\goh\subset\gob$ 
implies $0\notin \mathcal C'$ no matter which of the 4 parabolic subalgebras $\gop\supset \gob$ we choose.

\begin{definition}\label{defWeaklyCompatible}
We define $\gop$ to be weakly compatible with $i(\bar \gog)$ if 
there exists an element $\bar h \in \bar\goh$ such that 
$\gamma(i(\bar h))\in \mathbb Q$ for all $\gamma\in \Delta(\gog)$ and
\begin{enumerate}
\item $\alpha(\bar h)>0$ for all $\alpha\in \weights_{\bar{\goh}} \bar\gon$,
\item  $\beta(i(\bar h)) \geq 0$ for all $\beta\in \weights_{\goh} \gon$.
\end{enumerate}
\end{definition}
\begin{lemma}\label{leConeEquivalence}
Suppose $\gop$ is weakly compatible with $i(\bar{\gog})$ (Definition \ref{defWeaklyCompatible}). 
Then Condition A holds, i.e.,
\[
0\notin \mathcal C \Leftrightarrow 0\notin \mathcal C' \quad .
\]
\end{lemma}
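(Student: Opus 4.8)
The plan is to reduce the claimed biconditional to a single implication, and then to separate the two cones using the linear functional supplied by weak compatibility. First I would dispose of the trivial half: since any $N$ as in Definition \ref{defm(lambda,mu)} satisfies $N\subseteq\gon_-$, the generating weights of $\mathcal C$ form a sub-multiset of those of $\mathcal C'$, so $\mathcal C\subseteq\mathcal C'$ and hence $0\notin\mathcal C'\Rightarrow 0\notin\mathcal C$ for free. Consequently the asserted equivalence $0\notin\mathcal C\Leftrightarrow 0\notin\mathcal C'$ (equivalently, Condition A, by the discussion following Definition \ref{defConditionA}) is equivalent to the single remaining implication $0\in\mathcal C'\Rightarrow 0\in\mathcal C$, which is what I would prove.

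Next I would introduce the separating functional. Let $\bar h\in\bar\goh$ be the element furnished by Definition \ref{defWeaklyCompatible}, and set $L(\delta):=\delta(\bar h)$ for $\delta\in\bar\goh^*$; on all weights occurring below $L$ is real by the inequalities in that definition. The defining property of $\pr$ as the dual of $i|_{\bar\goh}$ gives $(\pr\beta)(\bar h)=\beta(i(\bar h))$ for every $\beta\in\goh^*$, which lets me read off the signs of $L$ on the relevant weight multisets. The $\bar\goh$-weights of $\gon_-$ are the $\pr(-\beta)$ for $\beta\in\weights_{\goh}\gon$, and Definition \ref{defWeaklyCompatible}.2 gives $L(\pr(-\beta))=-\beta(i(\bar h))\le 0$. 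The $\bar\goh$-weights of $\bar\gon_-$ are the $-\alpha$ for $\alpha\in\weights_{\bar\goh}\bar\gon$, and Definition \ref{defWeaklyCompatible}.1 gives $L(-\alpha)=-\alpha(\bar h)<0$ \emph{strictly}. Since $N\simeq\bar\gon_-$ has the same $\bar\goh$-weights as $\bar\gon_-$ (as recorded after Definition \ref{defConditionA}), $L$ is strictly negative on every weight of $N$.

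With these signs in hand the implication assembles quickly. As the weights of $\gon_-/N$ are obtained from those of $\gon_-$ by deleting the weights of $\bar\gon_-$ with multiplicity, and $L<0$ on the deleted ones while $L\le 0$ on all of $\gon_-$, any $\bar\goh$-weight $\delta$ of $\gon_-$ with $L(\delta)=0$ is untouched by the deletion and therefore survives with its full positive multiplicity as a weight of $\gon_-/N$. Now suppose $0\in\mathcal C'$, so there are weights $\delta_1,\dots,\delta_k\in\weights_{\bar\goh}\gon_-$ and positive integers $c_1,\dots,c_k$ with $\sum_j c_j\delta_j=0$. Applying $L$ yields $\sum_j c_j L(\delta_j)=0$ with each $L(\delta_j)\le 0$ and each $c_j>0$, which forces $L(\delta_j)=0$ for all $j$; by the preceding observation each such $\delta_j$ is then a weight of $\gon_-/N$, so the very same relation witnesses $0\in\mathcal C$. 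This gives $0\in\mathcal C'\Rightarrow 0\in\mathcal C$ and hence Condition A.

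The one step I would write most carefully — and the only place where anything could go wrong — is the multiplicity bookkeeping in passing from $\gon_-$ to $\gon_-/N$. I need not merely that $\mathcal C$ is generated by the ``leftover'' weights, but that a weight on which $L$ vanishes is genuinely untouched by the removal of the $N$-weights; this is exactly what the \emph{strict} inequality in Definition \ref{defWeaklyCompatible}.1 provides, together with the fact that the multiset of $\bar\goh$-weights of $N$ is independent of the choice of $N$ and equals that of $\bar\gon_-$. Everything else is the elementary fact that a vanishing positive-integer combination of nonpositive reals forces each summand to vanish, so I expect no serious obstacle beyond phrasing this deletion argument cleanly.
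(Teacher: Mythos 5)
Your proposal is correct and follows essentially the same route as the paper's proof: both reduce to the implication $0\in\mathcal C'\Rightarrow 0\in\mathcal C$, evaluate the purported vanishing positive-integer combination on the weak-compatibility element $\bar h$ (with signs flipped for the opposite nilradicals), and use strict negativity on the $\bar\gon_-$-weights versus non-positivity on all of $\weights_{\bar\goh}\gon_-$ to conclude that only weights surviving in $\gon_-/N$ can carry nonzero coefficients. Your explicit multiplicity bookkeeping for weights with $L(\delta)=0$ is a slightly more careful write-up of the same step the paper handles by setting $n_\gamma=0$ for weights projecting into $\weights_{\bar\goh}\bar\gon_-$.
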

\begin{proof}
($\Leftarrow$) is trivial; we are proving the other implication. 
$\weights_{\bar\goh}\gon=-\weights_{\bar\goh}\gon_-$ and therefore
there exists an element 
$\bar{h}\in \bar \goh$ for which $\alpha(\bar h)<0 $ for all 
$\alpha\in  \weights_{\bar{\goh}} \bar\gon_-$ 
and for which $\pr(\beta)(\bar h)\leq 0$ for all $\beta\in \weights_{\goh}\gon_-$. 

Suppose 
\[
0=\sum_{\beta\in \weights_{ \goh}{\gon_-} }n_\beta \pr(\beta)
\] 
for some non-negative numbers $n_\beta>0$. Evaluate both sides of the above 
expression on $\bar h$ to obtain that 
$0=\sum_{\beta\in \weights_{ \goh}{\gon_-} } n_\beta \pr(\beta)(\bar h)$, 
where all summands are non-positive and therefore must be zero. 
Let $\gamma$ be a weight with $\pr \gamma\in \weights_{\bar{\goh}} \bar\gon_-$. 
Then $\pr\gamma(\bar h)<0$ and therefore $n_\gamma=0$. 
Therefore $0\in \mathcal C'$ implies that $0\in \mathcal C$, 
which proves the Lemma.
\end{proof}

The notion of a parabolic subalgebra weakly compatible 
with $i(\bar{\gog})$ is a generalization of the notion of a 
compatible parabolic subalgebra, as the following Proposition shows.
\begin{proposition}\label{corCompatibleIsWeaklyCompatible}
Let $\gop$ be a parabolic subalgebra of $\gog$ compatible with $i(\bar\gog)$ 
in the sense of \cite[Section 3]{Kobayashi:branching}. 
Then $\gop$ is weakly compatible with $i(\bar\gog)$ and Condition A holds.
\end{proposition}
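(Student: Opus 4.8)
The plan is to exhibit the hyperbolic grading element underlying the compatibility of $\gop$ as the witness demanded by Definition \ref{defWeaklyCompatible}, and then to invoke Lemma \ref{leConeEquivalence} for Condition A. By the definition of compatibility in \cite[Section 3]{Kobayashi:branching}, there is a hyperbolic element $H\in i(\bar\goh)$, say $H=i(\bar h)$ with $\bar h\in\bar\goh$, such that $\ad H$ is diagonalizable with real eigenvalues and $\gop$ is the sum of its nonnegative eigenspaces; writing $\gog^{(j)}$ for the $j$-eigenspace of $\ad H$, this means $\gol=\gog^{(0)}$ and $\gon=\bigoplus_{j>0}\gog^{(j)}$. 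The verification of the two sign conditions of Definition \ref{defWeaklyCompatible} is then nearly immediate and forms the structural core of the argument.

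First I would check condition 2. Since $\gon$ consists of the strictly positive eigenspaces of $\ad H$, every $\beta\in\weights_\goh\gon$ satisfies $\beta(i(\bar h))=\beta(H)>0$, so that condition 2 holds. For condition 1 I would use that $H\in i(\bar\gog)$, so $i(\bar\gog)$ is $\ad H$-stable and therefore splits as the direct sum of the spaces $i(\bar\gog)\cap\gog^{(j)}$. Consequently $i(\bar\gog)\cap\gop=\bigoplus_{j\geq 0}\bigl(i(\bar\gog)\cap\gog^{(j)}\bigr)$, and pulling this back through the embedding while using $\ad(H)\circ i=i\circ\ad(\bar h)$ shows that $\bar\gop=i^{-1}(i(\bar\gog)\cap\gop)$ is exactly the parabolic of $\bar\gog$ cut out by the grading element $\bar h$. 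Hence $\bar\gon$ is the sum of the strictly positive $\ad\bar h$-eigenspaces, and $\alpha(\bar h)>0$ for every $\alpha\in\weights_{\bar\goh}\bar\gon$, which is condition 1.

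The one clause of Definition \ref{defWeaklyCompatible} not yet addressed is the rationality requirement $\gamma(i(\bar h))\in\mathbb Q$ for all $\gamma\in\Delta(\gog)$, which a general hyperbolic $H$ need not meet; securing it is the part of the proof that actually requires work. I would argue by density. At our $\bar h$ both conditions hold strictly (as $\gon$ and $\bar\gon$ are the strictly positive parts), so the set of $\bar h'\in\bar\goh$ satisfying $\alpha(\bar h')>0$ for $\alpha\in\weights_{\bar\goh}\bar\gon$ and $\pr(\beta)(\bar h')>0$ for $\beta\in\weights_\goh\gon$ is a nonempty, relatively open cone; here I use $\beta(i(\bar h'))=\pr(\beta)(\bar h')$, the defining property of $\pr$. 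These defining inequalities are rational with respect to the coweight lattice: the functionals $\alpha$ are rational on the coroot span of $\bar\goh$, while $\pr(\beta)$ takes integer values on each coroot $[\bar g_\alpha,\bar g_{-\alpha}]$, because $i$ maps the $sl(2)$-triple $([\bar g_\alpha,\bar g_{-\alpha}],\bar g_\alpha,\bar g_{-\alpha})$ to an $sl(2)$-triple of $\gog$, whose neutral element has integer $\ad$-eigenvalues on the adjoint representation. The contribution of the center of $\bar\gog$ is likewise rational, since its image under $i$ lies in the centralizer of $i([\bar\gog,\bar\gog])$, a reductive subalgebra one checks is defined over $\mathbb Q$. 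A nonempty open cone defined over $\mathbb Q$ meets the rational coweight lattice, and any such rational point is the sought $\bar h$.

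With weak compatibility established, Condition A is immediate from Lemma \ref{leConeEquivalence}. I expect the rationality step of the third paragraph to be the main obstacle: the sign conditions are forced by the grading, so the real content is verifying that the cone of admissible grading elements is defined over $\mathbb Q$ and hence contains a rational element. Should the compatibility hypothesis of \cite{Kobayashi:branching} already furnish an integral (rather than merely hyperbolic) grading element, this obstacle disappears and the proof collapses to the first two paragraphs together with the citation of Lemma \ref{leConeEquivalence}.
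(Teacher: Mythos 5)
Your verification of the two sign conditions is correct, but it runs along a different track from the paper's. For condition 2 the two arguments coincide: the weights of $\gon$ are the strictly positive $\ad H$-eigenvalues, hence positive on $i(\bar h)$. For condition 1, however, the paper does not redo any grading analysis: it invokes Lemma \ref{leLargeNilradicalHasLsubmoduleIsoToSmallerNilradical}(1) (the map $\pr$ sends $\supp_\goh\gon$ onto $\supp_{\bar\goh}\bar\gon$), lifts each $\alpha\in\weights_{\bar\goh}\bar\gon$ to some $\beta\in\weights_{\goh}\gon$ with $\pr(\beta)=\alpha$, and transfers positivity via $\alpha(\bar h)=\beta(i(\bar h))>0$. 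You instead observe that $i(\bar\gog)$ is $\ad H$-stable, deduce that $\bar\gop$ is exactly the parabolic of $\bar\gog$ cut out by the grading element $\bar h$, and read off positivity on $\weights_{\bar\goh}\bar\gon$ from the grading of $\bar\gog$ itself. Both routes are valid; yours is self-contained and makes the structural mechanism transparent (compatibility of $\gop$ descends to a grading description of $\bar\gop$), while the paper's is shorter because it reuses a lemma it needs elsewhere. The appeal to Lemma \ref{leConeEquivalence} for Condition A is identical in both.

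The genuine divergence is the rationality clause $\gamma(i(\bar h))\in\mathbb{Q}$ of Definition \ref{defWeaklyCompatible}. The paper's proof simply ignores it: it takes the hyperbolic element furnished by \cite[Section 3]{Kobayashi:branching}, whose $\ad$-eigenvalues are only guaranteed to be real, and checks the two inequalities. You are right that this clause is where the work lies, but your repair does not close it: the claim that the central contribution is rational ``since the centralizer of $i([\bar\gog,\bar\gog])$ is defined over $\mathbb{Q}$'' is a non sequitur. The weights by which the center of $\bar\gog$ acts on $\gog$ need satisfy no rational relations at all (e.g.\ a one-dimensional center embedded in $\goh$ as $\mathbb{C}\,\mathrm{diag}(1,\sqrt{2},-1-\sqrt{2})\subset sl(3)$), so your admissible cone is not cut out by inequalities rational with respect to any $\mathbb{Q}$-form of $\bar\goh$, and its rational points can be forced into the degenerate locus. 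In fact no density argument can work in general: taking $\bar\gog=gl(2)$ in $\gog=sl(4)$ with the center sent to $\mathrm{diag}(1,1,\sqrt{2},-2-\sqrt{2})$ and $\gop$ the Borel defined by the hyperbolic element $i(\mathrm{diag}(1,-1)+z)=\mathrm{diag}(2,0,\sqrt2,-2-\sqrt2)$, the rationality clause forces the central coordinate of any candidate $\bar h$ to vanish, after which condition 1 demands $a>0$ while condition 2 (applied to the weight $\varepsilon_2-\varepsilon_4$ of $\gon$) demands $-a\geq 0$. So your concluding hedge is exactly right: if compatibility furnishes a rational grading element the Proposition follows from your first two paragraphs, and otherwise the rationality clause is a genuine gap --- one your proof shares with the paper's, since the paper silently drops that clause. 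Note that the ``Condition A holds'' half of the statement is unaffected either way, since the proof of Lemma \ref{leConeEquivalence} never uses rationality (and in the example above one checks directly that $0\notin\mathcal{C}'$).
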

\begin{proof}
By the definition of a parabolic subalgebra compatible 
with $i(\bar{\gog})$ there exists an element $\bar h\in \bar\goh$ for 
which $\beta(i(\bar h))>0$ for all $\beta\in \weights_{\goh}(\gon)$. 
Let $\alpha\in \weights_{\bar{\goh}}\gon$. By Lemma 
\ref{leLargeNilradicalHasLsubmoduleIsoToSmallerNilradical} there exists 
$\beta \in   \weights_{\goh}(\gon)$ with $\pr(\beta)=\alpha$. 
Therefore $\alpha(\bar h)= \pr(\beta)(\bar h)=\beta( i(\bar h))>0$.
\end{proof}

We recall that $\mathbb Q[[y^\gamma]]_{\gamma\in \bar{\goh}^*}$ is not a ring, 
as products are not necessarily well defined: 
for example, the product of $\sum_{n=0}^\infty y^{n\alpha}  $ and 
$\sum_{n=0}^\infty y^{-n\alpha}$. However, if $0\notin \mathcal C'$, 
the vector subspace $\mathbb Q[[y^\gamma]]_{\mathcal C'}$ given by
\begin{equation}\label{eqRingQh}
\mathbb Q[[y^\gamma]]_{\mathcal C'} := \sum_{\mu\in \bar\goh^*} 
\{ \sum_{\gamma\in \mu+ \mathcal  C'} a(\gamma)y^\gamma ~ |~ a(\gamma)\in \mathbb Q\} 
\end{equation}
is a ring, where the outer $\sum$ sign stands vector space sum, i.e., the vector space 
generated by finite linear combinations of the elements of the vector spaces under the $\sum $ sign.

Therefore if $0\notin \mathcal C'$ we can write
\begin{equation}\label{eqGeometricSeriesSum0}
\begin{array}{rcl}
\CharS S^\star(\gon_-/N)&=&\displaystyle\prod_{\alpha\in \weights_{\bar\goh} (\gon_-/N)} \frac{1}{(1-y^{\alpha})}\\
\CharS S^\star(\gon_-)&=&\displaystyle\prod_{\alpha\in \weights_{\bar\goh} (\gon_-)} \frac{1}{(1-y^{\alpha})} \quad,
\end{array}
\end{equation}
where $\frac{1}{(1-y^{\alpha})}$ denotes the multiplicative 
inverse of $(1-y^{\alpha})$ in $\mathbb Q[[y^\gamma]]_{\mathcal C'}$, 
given by the geometric series sum formula, and the products are taken in the ring 
$\mathbb Q[[y^\gamma]]_{\mathcal C'}$.

Let $\lambda\in {\goh}^*$ be dominant with respect to $\gob\cap\gos$ 
and integral with respect to $\gos=[\gol,\gol]$, 
where we recall $\gol$ is the reductive Levi part of $\gog$.
As $\bar \gol$ is reductive in $\gog$, 
the $\bar\gol$-module $V_{\lambda}(\gol)$ decomposes as a direct sum of 
irreducible $\bar \gol$-modules. Therefore for $\mu\in \bar{\goh}^*$ we 
can define the numbers $n(\mu,\lambda)$ as the multiplicity of the module 
$V_{\mu} (\bar\gol)$ in $V_{\lambda}(\gol)$. The numbers $n(\mu,\lambda)$ 
are then computed via the finite sum
\begin{equation}\label{eqSplitCharSVlambdagol}
\CharS V_\lambda (\gol)=:\sum_{\mu\in \bar{\goh}^*} n(\mu, \lambda)\CharS V_\mu(\bar \gol)\quad .
\end{equation}

 For an arbitrary sequence of elements 
$P_1,\dots, P_n,\dots~\in Q[[y^\gamma]]_{\gamma\in \bar\goh^*}$,
let the functions $p_n:\bar\goh^*\to \mathbb Q$ be defined via
$P_n:=\sum_{\gamma\in \goh^*} p_{n}(\gamma) y^\gamma $. 
We say that the sequence 
$P_1,\dots, P_n,\dots~\in Q[[y^\gamma]]_{\gamma\in \bar\goh^*}$
has a limit if for all $\gamma\in\bar\goh^*$ the limit 
$\lim_{n\to \infty} p_n(\gamma)$ exists. 
If so, we define $\displaystyle\lim_{n\to \infty} P_n$ by
 $\displaystyle\lim_{n\to \infty} P_n:=\displaystyle\sum_{\gamma\in \goh^*} \left(\lim_{n\to \infty} p_n(\gamma)\right) y^\gamma $.

The following Lemma follows from the definition of $\CharS$ and the geometric 
series sum formula.
\begin{lemma}~\label{leGeometricSeriesSum}
\begin{enumerate}
\item 
 \begin{equation}\label{eqElementaryCharacterIdentity1}
\begin{array}{rcl}
\displaystyle\sum_{\mu\in \bar{\goh^*}} m(\mu,\lambda) \CharS V_\lambda(\bar\gol) 
&=& \displaystyle\lim_{n\to \infty}\left( \CharS V_{\lambda}(\gol) 
\displaystyle\sum_{i=0}^n \CharS (S^n(\gon_-/N))\right) \\
&=& \displaystyle\lim_{n\to \infty}\left( \CharS V_{\lambda}(\gol) 
\displaystyle \prod_{\alpha\in \weights_{\bar\goh} (\gon_-/N)} 
\frac{1-y^{n\alpha}}{1-y^{\alpha}}\right),
\end{array}
\end{equation}
where the division and multiplication operations are carried out 
in the ring $\mathbb Q[y^{\gamma}]_{\gamma\in \goh^*}$, 
we allow $m(\mu,\lambda) = +\infty$, and for $\alpha=0$ we have 
defined $\frac{1-y^{n\alpha}}{1-y^\alpha}_{|\alpha=0}:=n$.

\item Suppose $0\notin \mathcal C$. Then 
\begin{equation}\label{eqElementaryCharacterIdentity2}
\sum_{\mu\in \bar{\goh^*}} m(\mu,\lambda) \CharS V_\lambda(\bar\gol) 
=\CharS V_{\lambda}(\gol)  
\displaystyle\prod_{\alpha\in \weights_{\bar\goh} (\gon_-/N)} 
\frac{1}{1-y^{\alpha}}\quad ,
\end{equation}
where the inverse and multiplication operations are taken 
in the ring $\mathbb Q[[y^{\gamma}]]_{\mathcal C'}$ defined by (\ref{eqRingQh}).
\end{enumerate}
\end{lemma}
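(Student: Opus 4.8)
The plan is to read both sides as the $\bar\goh$-character of the single (generally infinite-dimensional) $\bar\gol$-module $V_\lambda(\gol)\otimes S^\star(\gon_-/N)$, and to recognize the two right-hand expressions in item 1 as two monotone truncations of that one character. First I would record the decomposition. Since $i(\bar\gol)$ is reductive in $\gog$, each graded piece $V_\lambda(\gol)\otimes S^i(\gon_-/N)$ is a finite-dimensional, hence completely reducible, $\bar\gol$-module; summing over $i$ gives
\[
V_\lambda(\gol)\otimes S^\star(\gon_-/N)\simeq \bigoplus_{\mu} V_\mu(\bar\gol)^{\oplus m(\mu,\lambda)},
\]
where by Schur's lemma $m(\mu,\lambda)$ is exactly the multiplicity of Definition \ref{defm(lambda,mu)}. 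Applying $\CharS$, which is additive over direct sums and multiplicative over tensor products (weights add), converts $\sum_\mu m(\mu,\lambda)\CharS V_\mu(\bar\gol)$ into $\CharS V_\lambda(\gol)\cdot\CharS S^\star(\gon_-/N)$, with $\CharS S^\star(\gon_-/N)=\sum_{i\ge 0}\CharS S^i(\gon_-/N)$ understood coefficientwise. Note $\CharS V_\lambda(\gol)$ has finite support, so multiplying by it never raises well-definedness issues.

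The heart of item 1 is then a convergence argument carried out one $\bar\goh$-weight $\gamma$ at a time. I would observe that the coefficient of $y^\gamma$ in the partial sum $\sum_{i=0}^n\CharS S^i(\gon_-/N)$ counts the monomials in the weights of $\gon_-/N$ of total degree $\le n$ that sum to $\gamma$, whereas the coefficient of $y^\gamma$ in $\prod_{\alpha\in\weights_{\bar\goh}(\gon_-/N)}\frac{1-y^{n\alpha}}{1-y^\alpha}=\prod_{\alpha}\sum_{k=0}^{n-1}y^{k\alpha}$ counts those with each individual exponent $\le n-1$. Both are non-negative integers that are non-decreasing in $n$, and both increase to the number of all $\mathbb Z_{\ge 0}$-representations of $\gamma$, i.e. to the coefficient of $y^\gamma$ in the formal product $\prod_\alpha\frac1{1-y^\alpha}$. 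Hence each limit exists in the extended range $\mathbb Z_{\ge 0}\cup\{+\infty\}$, the two limits agree, and multiplying by the finite character $\CharS V_\lambda(\gol)$ preserves this coefficientwise monotone convergence; the common value is $\sum_\mu m(\mu,\lambda)\CharS V_\mu(\bar\gol)$, allowing $m(\mu,\lambda)=+\infty$. The convention assigning the factor $n$ to a weight $\alpha=0$ fits this picture, since a zero weight forces $0\in\mathcal C$ and makes the relevant coefficients diverge to $+\infty$.

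For item 2 I would invoke $0\notin\mathcal C$. Since the statement uses the ring $\mathbb Q[[y^\gamma]]_{\mathcal C'}$, we work under $0\notin\mathcal C'$, which given $0\notin\mathcal C$ is precisely Condition A; as $\mathcal C\subseteq\mathcal C'$ this is the available hypothesis. In this ring each $\frac1{1-y^\alpha}=\sum_{k}y^{k\alpha}$ is a genuine element and, by \eqref{eqGeometricSeriesSum0}, the finite product $\prod_\alpha\frac1{1-y^\alpha}$ equals $\CharS S^\star(\gon_-/N)$. Crucially, $0\notin\mathcal C$ means every $\gamma$ admits only finitely many $\mathbb Z_{\ge 0}$-representations, so every coefficient of this product is finite and the partial-sum limit of item 1 coincides literally with this closed-form product; multiplying by $\CharS V_\lambda(\gol)$ and comparing with item 1 yields the identity. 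The only real obstacle is the bookkeeping of the two truncation shapes and the monotone passage to $+\infty$; once that is organized, everything reduces to the geometric series sum formula together with the reductivity of $\bar\gol$.
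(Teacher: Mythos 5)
Your proof is correct, and it is precisely the argument the paper leaves implicit: the paper gives no proof of this Lemma beyond the preceding remark that it ``follows from the definition of $\CharS$ and the geometric series sum formula,'' and your two steps --- identifying $\sum_\mu m(\mu,\lambda)\CharS V_\mu(\bar\gol)$ with $\CharS\left(V_\lambda(\gol)\otimes S^\star(\gon_-/N)\right)$ via reductivity of $\bar\gol$ and Schur's lemma, then comparing the two truncations coefficient-by-coefficient as monotone counts of lattice points converging to the Kostant-type count --- are exactly the content of that remark. Your treatment is in fact slightly more careful than the paper's statement, since you silently repair the misprints ($V_\mu(\bar\gol)$ for $V_\lambda(\bar\gol)$, $S^i$ for $S^n$) and you address the mismatch between the hypothesis $0\notin\mathcal C$ and the use of the ring $\mathbb{Q}[[y^\gamma]]_{\mathcal C'}$, which is only a ring when $0\notin\mathcal C'$, i.e.\ under Condition A.
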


\subsection{Character formulas and multiplicity functions $m(\mu,\lambda)$}

In the following Theorem we explore what happens when we apply the 
map $\Pr$ to the character $\CharL M_\lambda(\gog,\gop)$. The observation of 
the Theorem is that we can separate the $\Pr$-images of the denominators 
$(1-x^\beta)$ in the generating function of the character formula according 
to the (possibly zero) multiplicity with which $\pr( \beta)$ belongs to 
$\weights_{\bar \goh} \bar\gon_- $ and $\weights_{\bar{\goh}}\bar{\gon}_-/N$. 

\begin{theorem}\label{thBranchingMultsViaSymmetricTensors}
Let $\lambda\in \goh^*$ be a weight dominant with respect to $\gob\cap\gos$ and 
integral with respect to $\gos$. 
\begin{enumerate}
\item Suppose $0\notin \mathcal C'$. 
Then we have that $\CharL M_\lambda(\gog,\gop)$ is $\pr$-finite and
\begin{equation}\label{eqThBranchingViaSymmetricTensors}
\CharS M_\lambda(\gog,\gop)=\Pr (\CharL M_\lambda(\gog,\gop) )=
\sum_{\mu\in{\bar \goh}^*} m(\mu, \lambda)\CharS M_\mu({\bar\gog},{\bar\gop})\quad .
\end{equation}
In particular $m(\mu, \lambda)\in \mathbb Z_{\geq 0}$.

In the language of \cite{Kobayashi:branching}, in the Grothendieck group of ${\mathcal  O}^{\bar\gop}$
of the Bernstein-Gelfand-Gelfand category $K({\mathcal  O}^{\bar\gop})$ there is equality
\[
\CharS M_\lambda(\gog,\gop)=
\bigoplus_{\mu\in{\bar \goh}^*} m(\mu, \lambda)\CharS M_\mu({\bar\gog},{\bar\gop})\quad .
\]
\item If $0\in \mathcal C$, we have that if 
$m(\mu,\lambda)\neq 0$ then $m(\mu,\lambda)= +\infty$.
\end{enumerate}
\end{theorem}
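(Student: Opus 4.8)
The plan is to prove the two parts by the same mechanism but in opposite regimes: factor $U(\gon_-)$ as an $\bar\gol$-module and feed the resulting character into the geometric-series identities of Lemma \ref{leGeometricSeriesSum}. The structural input is that, by Lemma \ref{leLargeNilradicalHasLsubmoduleIsoToSmallerNilradical} and complete reducibility of the reductive $\bar\gol$-action, $\gon_-\cong N\oplus(\gon_-/N)$ with $N\cong\bar\gon_-$, hence $U(\gon_-)\cong S^\star(N)\otimes S^\star(\gon_-/N)$ as $\bar\gol$-modules, so that $\CharS M_\lambda(\gog,\gop)=\CharS V_\lambda(\gol)\cdot\CharS S^\star(\bar\gon_-)\cdot\CharS S^\star(\gon_-/N)$, where I use that $N$ and $\bar\gon_-$ have identical $\bar\goh$-weights.

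For Part 1, I first establish $\pr$-finiteness. Since $0\notin\mathcal C'$, the cone generated by $\weights_{\bar\goh}(\gon_-)$ is pointed, so there is a functional $\phi$ on $\bar\goh^*$ with $\phi(\pr\beta)>0$ for every $\beta\in\weights_\goh(\gon_-)$. Every weight of $M_\lambda(\gog,\gop)$ is $\nu-\sum_\beta n_\beta\beta$ with $\nu\in\weights_\goh V_\lambda(\gol)$ ranging over a finite set and $n_\beta\in\mathbb Z_{\ge 0}$; evaluating $\phi\circ\pr$ shows a fixed value of $\pr$ bounds $\sum_\beta n_\beta$, so only finitely many weights project to each $\delta\in\bar\goh^*$, giving $\pr$-finiteness and thus $\Pr(\CharL M_\lambda(\gog,\gop))=\CharS M_\lambda(\gog,\gop)$ by \eqref{eqApplyPrOnChars}. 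I then work inside the ring $\mathbb Q[[y^\gamma]]_{\mathcal C'}$ (well defined because $0\notin\mathcal C'$): by \eqref{eqGeometricSeriesSum0} the factorization above becomes $\CharS V_\lambda(\gol)\cdot\CharS S^\star(\bar\gon_-)\cdot\CharS S^\star(\gon_-/N)$, and since $0\notin\mathcal C'$ forces $0\notin\mathcal C$, Lemma \ref{leGeometricSeriesSum}(2) rewrites $\CharS V_\lambda(\gol)\cdot\CharS S^\star(\gon_-/N)=\sum_\mu m(\mu,\lambda)\CharS V_\mu(\bar\gol)$ with each $m(\mu,\lambda)$ a finite nonnegative integer (it is a $\dim\Hom_{\bar\gol}$ by Definition \ref{defm(lambda,mu)}, and finiteness again follows from the pointed-cone bound). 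Multiplying back by $\CharS S^\star(\bar\gon_-)$ and recognizing $\CharS M_\mu(\bar\gog,\bar\gop)=\CharS V_\mu(\bar\gol)\cdot\CharS S^\star(\bar\gon_-)$ yields \eqref{eqThBranchingViaSymmetricTensors}. For the Grothendieck-group statement I would note that the sum is locally finite (each $\bar\goh$-weight receives finitely many contributions, by $\pr$-finiteness), that the $\CharS M_\mu(\bar\gog,\bar\gop)$ are linearly independent since each has a distinct $\bar\gob$-maximal weight $\mu$, and that $M_\lambda(\gog,\gop)$ is discretely decomposable in $\mathcal O^{\bar\gop}$ by Lemma \ref{leDiscreteDecomposability}, so the character identity lifts to $K(\mathcal O^{\bar\gop})$.

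For Part 2, I would argue directly on the $\bar\gol$-module $W:=V_\lambda(\gol)\otimes S^\star(\gon_-/N)$, whose $V_\mu(\bar\gol)$-multiplicity is exactly $m(\mu,\lambda)$. From $0\in\mathcal C$ choose $\alpha_1,\dots,\alpha_k\in\weights_{\bar\goh}(\gon_-/N)$ and $c_j\in\mathbb Z_{>0}$ with $\sum_j c_j\alpha_j=0$, and form the degree-$D$ element $P=\prod_j v_{\alpha_j}^{c_j}\in S^\star(\gon_-/N)$ of $\bar\goh$-weight $0$. Because $S^\star(\gon_-/N)$ is a polynomial ring, multiplication by $P$ is injective and $\bar\goh$-equivariant, so for any occurring $\bar\gol$-highest weight vector $w_0$ of weight $\mu$ the vectors $w_0,Pw_0,P^2w_0,\dots$ are linearly independent of weight $\mu$; this shows every occurring weight has infinite $\bar\goh$-weight multiplicity. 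The delicate step — and the one I expect to be the main obstacle — is upgrading this to infinitude of the $\bar\gol$-isotypic multiplicity $m(\mu,\lambda)$ itself, since $P$ is only a weight-zero vector and need not be $\bar\gol$-invariant (the $\bar\goh$-weight multiplicities can be infinite while each $m(\mu,\lambda)$ stays finite, with infinitely many distinct $\mu$ overlapping at a common weight). To close this gap the plan is to produce a genuinely $\bar\gol$-invariant homogeneous $f\in\bigl(S^\star(\gon_-/N)\bigr)^{\bar\gol}$ of positive degree from the relation witnessing $0\in\mathcal C$; then multiplication by $f$ is $\bar\gol$-equivariant and injective on the domain $W$, so $w_0,fw_0,f^2w_0,\dots$ are infinitely many $\bar\gol$-highest weight vectors of weight $\mu$, forcing $m(\mu,\lambda)=+\infty$ whenever it is nonzero. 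I would establish the existence of such an $f$ by a separate structural argument using the grading by the center of $\bar\gol$ together with the $[\bar\gol,\bar\gol]$-module structure of $\gon_-/N$; alternatively, I would derive Part 2 from the limit formula in Lemma \ref{leGeometricSeriesSum}(1) by showing that when $0\in\mathcal C$ the partial products $\prod_{\alpha}\frac{1-y^{n\alpha}}{1-y^{\alpha}}$ diverge along the $\bar\gol$-dominant directions in a manner incompatible with all nonzero $m(\mu,\lambda)$ being finite.
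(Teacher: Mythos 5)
Your Part 1 is correct and follows the paper's own proof essentially verbatim: the factorization $\CharS M_\lambda(\gog,\gop)=\CharS V_\lambda(\gol)\cdot\CharS S^\star(\bar\gon_-)\cdot\CharS S^\star(\gon_-/N)$ obtained by splitting off $N\cong\bar\gon_-$ is the paper's \eqref{eqSplitNilradicalPowerSeries}, the application of Lemma \ref{leGeometricSeriesSum}(2) (valid since $\mathcal C\subset\mathcal C'$) is the paper's middle step, and the reassembly via \eqref{eqGenVermaCharSmall} is identical. The one place you differ is $\pr$-finiteness, which the paper asserts in a single line from \eqref{eqGenVermaCharr} and \eqref{eqGeometricSeriesSum0}, while you prove it by a Gordan-type separation functional $\phi$; that elaboration is correct and welcome.

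For Part 2 there is a genuine gap, and it sits exactly where you flagged it. Your primary repair — a positive-degree invariant $f\in\bigl(S^\star(\gon_-/N)\bigr)^{\bar\gol}$ manufactured from the relation $\sum_j c_j\alpha_j=0$ — does not exist in general. If $\gon_-/N$ restricts to $\bar\gos\cong sl(2)$ as the two-dimensional standard module (weights $\pm\tfrac{1}{2}\alpha$, so $0\in\mathcal C$), then $S^d(\gon_-/N)$ is irreducible and nontrivial for every $d>0$, so the invariant ring is the constants; your weight-zero element $P$ generates a nontrivial irreducible, multiplication by it is not $\bar\gol$-equivariant, and no chain of highest weight vectors results. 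This configuration is realizable within the paper's standing hypotheses: take the regular $sl(2)\subset sl(3)$ (the paper itself uses this pair as an example of $0\in\mathcal C'$) with $\gop=\gop_{(0,1)}$; then $\bar\gop=\bar\gog$, $N=\{0\}$, and $\gon_-$ is exactly the standard $sl(2)$-module. Your fallback (that the partial products of Lemma \ref{leGeometricSeriesSum}(1) diverge ``in a manner incompatible with finiteness'') is not an argument as written: divergence of $\bar\goh$-weight coefficients is perfectly compatible with all $\bar\gol$-isotypic multiplicities staying finite when infinitely many distinct $\mu$ overlap at a common weight — the very scenario you yourself describe.

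You should know that the subtlety you isolated is not an artifact of your route: the paper's own proof of Part 2 stops after showing the coefficient of $y^\mu$ in $\CharS V_\lambda(\gol)\cdot A(n)$ diverges and concludes, i.e. it makes precisely the leap from infinite weight multiplicities to infinite isotypic multiplicities that you refused to make. Indeed, in the $sl(3)$ configuration above with $\lambda=0$ one computes $V_0(\gol)\otimes S^\star(\gon_-/N)=S^\star(\mathbb C^2)=\bigoplus_{d\geq 0}S^d(\mathbb C^2)$ with each irreducible $\bar\gol$-constituent occurring exactly once, so every nonzero $m(\mu,0)$ equals $1$ even though $0\in\mathcal C$ and every $\bar\goh$-weight multiplicity is infinite. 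So your diagnosis of the obstacle is exactly right, but neither of your proposed repairs closes it, and no repair along the paper's literal lines can: Part 2 requires additional hypotheses (it does hold, for instance, when $\bar\gol$ is abelian, as in the paper's $\gop\simeq\gob$ example, where isotypic and weight multiplicities coincide) or a genuinely different mechanism producing $V_\mu(\bar\gol)$ in infinitely many symmetric degrees, such as the Clebsch--Gordan growth available only when the tensoring factor is nontrivial.
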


\begin{proof}
1. By (\ref{eqGenVermaIsUnTimesV}) we have that
\begin{eqnarray} \label{eqGenVermaCharr}
\CharL M_{\lambda}(\gog, \gop) 
&=&\left( \prod_{\beta\in \weights_\goh(\gon_-)} 
\frac{1}{1-x^{\beta}}\right)\CharL V_\lambda(\gol) \quad,
\end{eqnarray}
and similarly 
\begin{eqnarray} \label{eqGenVermaCharSmall}
\CharS M_{\mu}(\bar\gog, \bar\gop) 
&=&\left( \prod_{\alpha\in \weights_{\bar\goh}(\bar\gon_-)} 
\frac{1}{1-y^{\alpha}}\right)\CharL V_\lambda(\bar\gol) \quad.
\end{eqnarray}
By \eqref{eqGenVermaCharr} and  \eqref{eqGeometricSeriesSum0} 
$\CharL M_\lambda(\gog, \gop)$ is $\pr$-finite. Therefore we can compute
\begin{eqnarray}\notag
& & \Pr\left( \prod_{\beta\in \weights_\goh(\gon_-)} \frac{1}{1-x^{\beta}}\right)= 
\prod_{\beta\in \weights_\goh(\gon_-)} \frac{1}{1-y^{\pr(\beta)}}\\ 
\label{eqSplitNilradicalPowerSeries}
& & =
\prod_{\alpha\in \weights_{\bar\goh}(\gon_-/N)} \frac{1}{1-y^{\alpha}} 
\prod_{\alpha\in \weights_{\bar\goh}(N)}\frac{1}{1-y^{\alpha}}\quad ,
\end{eqnarray}
where the multiplication is taken in the ring $\mathbb Q[[y^\gamma]]_{\mathcal C'}$.
The computation
\begin{eqnarray*}
\Pr \CharL M_\lambda(\gog, \gop)& \stackrel{(\ref{eqGenVermaCharr}), (\ref{eqSplitNilradicalPowerSeries}) }{=} &
\left(\CharS   V_\lambda(\gol)\prod_{\alpha\in \weights_{\bar\goh}(\gon_-/N)} \frac{1}{1-y^{\alpha}}\right)
\prod_{\alpha\in \weights_{\bar\goh}(\bar\gon_-)}\frac{1}{(1-y^{\alpha})} \\
\nonumber \\
& \stackrel{(\ref{eqElementaryCharacterIdentity2}) }{=} & 
\sum_\mu m(\mu,\lambda) \CharS V_\mu(\bar\gol)\prod_{\alpha\in \weights_{\bar\goh}(\bar\gon_-)}
\frac{1}{(1-y^{\alpha})}\\&\stackrel{(\ref{eqGenVermaCharSmall})}{=}& 
\sum_\mu m(\mu,\lambda) \CharS   M_\mu({\bar\gog},{\bar\gop})
\end{eqnarray*}
completes the proof of 1).

2. By (\ref{eqElementaryCharacterIdentity1}), for any positive integer $n$,
we have that $m(\mu,\lambda)$ is greater than or equal to the coefficient of $y^{\mu}$ 
in the expression
\[
\CharS V_\lambda(\gol) \prod_{\alpha\in \weights_{\bar\goh} (\gon_-/N)} (1+y^\alpha+\dots+y^{n\alpha})\quad .
\]
Consider the expression
\[
A(n):=\prod_{\alpha\in \weights_{\bar\goh} (\gon_-/N)} (1+y^\alpha+\dots+y^{n\alpha})\quad .
\] 
The condition $0\in \mathcal C$ implies there exist non-negative 
integers $t_\alpha$, not simultaneously vanishing, with 
$0=\sum_{\alpha\in \weights_{\bar\goh} (\gon_-/N)} t_\alpha\alpha$. 
If $\mu=\sum_{\alpha\in \weights_{\bar\goh} (\gon_-/N)} m_\alpha \alpha $ 
for some integers $m_\alpha\geq 0$, the coefficient in front of $y^{\mu}$ in $A(n)$ 
is greater than or equal to $(n- \max_\alpha m_\alpha)/(\max_\alpha t_\alpha)$. 
Therefore, as $n\to \infty$, the coefficient in front of $y^\mu$ in $A(n)$ 
tends to $\infty$. As the character $\CharS V_\lambda(\gol) $ is 
polynomial (in the variables $y^{\psi_j}$ for $\psi_j$ - the fundamental 
$\bar{\goh}$-weights of $\bar\gog$) with positive coefficients, the statement follows.
\end{proof}

For an ordered tuple $X\subset\bar\goh^*$, denote by $P_{X}:\bar\goh^*\to \mZ_{\geq 0}\cup \{\infty\}$ 
the vector (Kostant) partition function with respect to $X$. More precisely,
$P_X(\alpha)$ is defined to be the number of ways to write $\alpha$ as a non-negative 
integral combination of the elements of $X$. For overview of the theory of 
vector partition functions we direct the reader to \cite{BBCV} and 
the references therein.

We conclude this section by expressing the 
multiplicities $m(\mu, \lambda)$ via vector partition functions. 
Vector partition functions can be written in closed form as 
piecewise quasi-polynomials, for example using the Szenes-Vergne formula, 
\cite{Milev:PartialFractions}, and therefore so can the function $m(\mu, \lambda)$. 
Here, closed form formula means a formula which can
be evaluated with a constant number of arithmetic operations in 
the coordinates of $\mu$ and $\lambda$.
Although we do not use it in the remainder of the paper, 
we find that Theorem \ref{thTheBranchingRuleIsQP} is important on its own. 
As its proof  is constructive, it will allow the use of computers 
to compute the piecewise quasi-polynomial functions $m(\mu, \lambda)$. 
In addition, Theorem \ref{thTheBranchingRuleIsQP} gives an upper 
bound on the growth of $m(\mu, \lambda)$ as a function of $\mu$ and $\lambda$, 
as the example at the end of the section shows.

We give first a definition of piecewise quasi-polynomials. 
\begin{definition}
We say that a function $p:\mathbb C^n\to \mathbb Q$ is an elementary 
piecewise quasi-polynomial if there exist 
\begin{enumerate}
\item
A finite set of non-strict linear inequalities with rational 
coefficients cutting off a (not necessarily bounded) 
set $C\subset \mathbb R^n\subset \mathbb C^n$,
\item
A rational lattice $\Lambda$ (i.e., a $\mathbb Z$-span of a set of vectors with rational coordinates), 
\item
A rational polynomial $p_{C,\Lambda}(y_1,\dots, y_n)$, such that 
\[p(y_1,\dots, y_n)=\left\{\begin{array}{ll} p_{C, \Lambda}(y_1,\dots, y_n) & \mathrm{if~}(y_1,\dots, y_n)\in \Lambda \cap C\\0 &\mathrm{otherwise\quad .}\end{array}\right.
\] 
\end{enumerate}
We say that the degree of  $p_{C,\Lambda}$ is the degree 
of the elementary piecewise quasi-polynomial $p$.

We say that a function $p$ is piecewise quasi-polynomial if it can be 
written as a finite sum of elementary piecewise 
quasi-polynomials. We say that $p$ is of degree $d$ if $p$ can be 
written as a linear combination of elementary piecewise 
quasi-polynomials, each of degree less 
than or equal to $d$, and $d$ is the 
smallest integer with this property.
\end{definition}

\begin{theorem}
\label{thTheBranchingRuleIsQP}
Assume the notation of Theorem \ref{thBranchingMultsViaSymmetricTensors} 
and $0\notin \mathcal C$. Then the multiplicity function 
$m(\mu, \lambda)$ is piecewise quasi-polynomial (in the coordinates of $\mu$ and $\lambda$, 
i.e., a quasi-polynomial of 
$\dim \goh+\dim \bar\goh$ variables). The piecewise quasi-polynomial 
is of degree not exceeding the integer 
$\frac{1}{2}(\dim \gog- \dim\bar\gog-\dim \goh-\dim \bar\goh )$.
\end{theorem}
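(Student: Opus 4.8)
The plan is to express the multiplicity function $m(\mu,\lambda)$ as a combination of vector partition functions and then invoke the known fact that such functions are piecewise quasi-polynomial. First I would start from Theorem \ref{thBranchingMultsViaSymmetricTensors}.1, which under $0\notin \mathcal C$ (hence $0\notin \mathcal C'$ by Lemma \ref{leConeEquivalence}, or directly from the hypotheses) gives the closed-form character identity. Combining \eqref{eqElementaryCharacterIdentity2} with \eqref{eqSplitCharSVlambdagol} and the Weyl character formula for the finite-dimensional $\bar\gol$-module $V_\lambda(\gol)$, I would write
\[
m(\mu,\lambda)=\sum_{\nu} n(\nu,\lambda)\, \dim \Hom_{\bar\gol}\!\left(V_\mu(\bar\gol),\; V_\nu(\bar\gol)\otimes S^\star(\gon_-/N)\right),
\]
where the inner multiplicities are, via the Weyl character formula applied in $\mathbb Q[[y^\gamma]]_{\mathcal C'}$, extractable as coefficients of a rational generating function whose denominator is $\prod_{\alpha\in \weights_{\bar\goh}(\gon_-/N)}(1-y^{\alpha})$. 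The point is that each coefficient extraction of this form is governed by the Kostant partition function $P_X$ with $X=\weights_{\bar\goh}(\gon_-/N)$.

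Next I would make the reduction to partition functions precise. The multiplicity $\dim \Hom_{\bar\gol}(V_\mu(\bar\gol), V_\nu(\bar\gol)\otimes S^\star(\gon_-/N))$ can be computed by the standard Kostant-type formula: apply the Weyl character formula to $V_\nu(\bar\gol)$ and to $V_\mu(\bar\gol)$, so that the multiplicity becomes an alternating sum over pairs $(w,w')\in \mathbf W(\bar\gol)\times \mathbf W(\bar\gol)$ of values $P_X\big(w(\nu+\rho_{\bar\gol}) - w'(\mu+\rho_{\bar\gol})\big)$, where $\rho_{\bar\gol}$ is the half-sum of positive roots of $\bar\gol$ and $X=\weights_{\bar\goh}(\gon_-/N)$. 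Since $n(\nu,\lambda)$ is itself a branching multiplicity of a finite-dimensional module, it is (for fixed $\lambda$, or as a function of $\lambda$ restricted to the dominant chamber) also piecewise quasi-polynomial; and the finite sum over $\nu$ has support in a region linearly controlled by $\lambda$. Assembling these, $m(\mu,\lambda)$ is a finite $\mathbb Z$-linear combination of evaluations of $P_X$ at arguments that are affine-linear in the coordinates of $\mu$ and $\lambda$. I would then cite the fact (e.g., the Szenes--Vergne formula, \cite{Milev:PartialFractions}) that $P_X$ is piecewise quasi-polynomial, and observe that piecewise quasi-polynomiality is preserved under affine substitution of arguments and under finite linear combinations, yielding the first claim.

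For the degree bound I would use the standard fact that the vector partition function $P_X$ for a list $X$ of $N=|X|$ vectors spanning an $r$-dimensional space is piecewise quasi-polynomial of degree $N-r$. Here $X=\weights_{\bar\goh}(\gon_-/N)$ has cardinality $\dim(\gon_-/N)=\dim\gon_- - \dim\bar\gon_-$, and these weights span a subspace of $\bar\goh^*$ of dimension at most $\dim\bar\goh$. Hence the degree is at most
\[
\dim(\gon_-) - \dim(\bar\gon_-) - \dim\bar\goh .
\]
Using $\dim\gon_- = \tfrac12(\dim\gog - \dim\gol)$ and $\dim\gol \geq \dim\goh$, together with $\dim\bar\gon_- = \tfrac12(\dim\bar\gog-\dim\bar\gol)$ and $\dim\bar\gol\geq\dim\bar\goh$, one bounds this above by $\tfrac12(\dim\gog - \dim\bar\gog - \dim\goh - \dim\bar\goh)$, matching the asserted bound; the affine substitutions and the alternating Weyl sum do not raise the degree. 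The main obstacle I anticipate is bookkeeping the spanning rank of $X$ and confirming that the Weyl-sum and $\lambda$-dependence contribute no net increase to the degree — in particular verifying that the piece coming from $n(\nu,\lambda)$, which lives in the $\lambda$ variables, does not inflate the degree beyond the stated bound. Handling the rank estimate carefully (the weights of $\gon_-/N$ may fail to span all of $\bar\goh^*$, which only helps the bound) is where the argument needs the most care.
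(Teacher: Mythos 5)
Your overall strategy---reduce $m(\mu,\lambda)$ to vector partition functions and quote their piecewise quasi-polynomiality---is the same as the paper's, but your reduction routes through the decomposition of $V_\lambda(\gol)$ over $\bar\gol$, and that is where the argument has a genuine gap. In your assembled formula
\[
m(\mu,\lambda)=\sum_{\nu} n(\nu,\lambda)\,\dim\Hom_{\bar\gol}\bigl(V_\mu(\bar\gol),\,V_\nu(\bar\gol)\otimes S^\star(\gon_-/N)\bigr),
\]
the index $\nu$ runs over the $\bar\gol$-constituents of $V_\lambda(\gol)$, and the number of such $\nu$ grows with $\lambda$. Hence $m(\mu,\lambda)$ is \emph{not} a finite linear combination, with a number of terms bounded independently of $\lambda$, of evaluations of $P_{X'}$ (where $X'=\weights_{\bar\goh}(\gon_-/N)$) at points affine-linear in $(\mu,\lambda)$; so the closure properties you invoke (finite sums and affine substitutions preserve piecewise quasi-polynomiality and degree) do not apply. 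This is not cosmetic: summing a quasi-polynomial over a lattice region whose size grows linearly in $\lambda$ generically \emph{raises} the degree by the dimension of that region, and $n(\nu,\lambda)$ itself grows with $\lambda$. Indeed, the degree bound furnished by $P_{X'}$ alone, namely $\dim\gon_--\dim\bar\gon_--\dim\bar\goh$, falls short of the stated bound by exactly $\frac12\bigl(|\Delta(\gol)|-|\Delta(\bar\gol)|\bigr)$, and this shortfall is precisely the degree that the $\nu$-summation together with the factors $n(\nu,\lambda)$ can (and generically will) contribute; your proposal gives no mechanism to control it. A second, smaller error: your parenthetical that the weights of $\gon_-/N$ failing to span $\bar\goh^*$ ``only helps the bound'' is backwards---the degree of a partition function is the number of its vectors minus the dimension of their \emph{span}, so a smaller span makes the degree bound larger, not smaller.

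The paper avoids all of this by never decomposing $V_\lambda(\gol)$ over $\bar\gol$: it applies the Weyl character formula for $\gol$ to $\CharL V_\lambda(\gol)$, so that after multiplying by the $\gon_-$ factor the denominator becomes the product over all of $\weights_\goh(\gom_-)$ (the Borel-level opposite nilradical), projects by $\Pr$, cancels the common factor $\prod_{\alpha\in \weights_{\bar\goh}\bar\gom_-}(1-y^{\alpha})^{-1}$ against the analogous expression for $\CharS M_\mu(\bar\gog,\bar\gop)$, and compares coefficients of $\bar\gob\cap\bar\gos$-dominant exponents (only $w=\id$ on the right produces such an exponent). This yields the single alternating sum
\[
m(\mu,\lambda)=\sum_{w\in \mathbf{W}(\gol)}\sign(w)\,P_X\bigl(\mu-\pr(w(\lambda+\rho_\gol)-\rho_\gol)\bigr),
\qquad X=\weights_{\bar\goh}\bigl(\gom_-/i(\bar\gom_-)\bigr),
\]
in which the Levi-root contributions that your approach leaves inside $n(\nu,\lambda)$ are absorbed into the list $X$, and the degree bound $|X|-\dim\bar\goh=\frac12(\dim\gog-\dim\bar\gog-\dim\goh-\dim\bar\goh)$ follows in one line. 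If you want to salvage your route, the repair is to write $n(\nu,\lambda)$ itself by a Kostant-type formula as an alternating sum of the partition function on $\weights_{\bar\goh}\bigl((\gom_-\cap\gol)/i(\bar\gom_-\cap\bar\gol)\bigr)$ and then use that the convolution of two partition functions is the partition function of the concatenated list; but carrying this out reassembles exactly the paper's $X$ and reproduces its proof.
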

\begin{proof}
Let $\lambda\in \goh^*$ be integral with respect to $\gos=[\gol,\gol]$ and dominant with respect to 
$\gob\cap \gos$, where we recall that $\gol$ is the reductive Levi part of $\gop$.
The Weyl character formula asserts that 
\[
\CharL V_\lambda(\gol) = \left(\prod_{-\alpha\in \weights_{\goh}(\gol\cap \gob) } 
\frac{1}{1-x^{\alpha}}\right) \left(\sum_{w\in \mathbf{W}(\gol)} \sign(w)x^{w (\lambda+\rho_\gol)-\rho_\gol}\right),
\]
where $\rho_\gol:=\frac 1 2 \displaystyle\sum_{\beta\in \weights_{\goh}\gol\cap\gob}\beta$. 
By (\ref{eqGenVermaIsUnTimesV}) and the Weyl character formula we have that

\begin{eqnarray}\notag
\CharL M_{\lambda}(\gog, \gop) 
&=&\left( \prod_{\alpha\in \weights_\goh(\gon_-)} \frac{1}{1-x^{\alpha}}\right)
\CharL V_\lambda(\gol) \\ \label{eqGenVermaChar}
&=&\left( \prod_{-\alpha\in \weights_{\goh}(\gob)}\frac{1}{1-x^{\alpha}}\right) 
\sum_{w\in \mathbf{W}(\gol)} \sign(w)x^{w (\lambda+\rho_\gol)-\rho_\gol}\quad .
\end{eqnarray}
Similarly, we get that 
\begin{eqnarray}\label{eqthBranchingMultsViaSymmetricTensors1}
\CharS M_{\mu}(\bar\gog, \bar\gop) 
&=&\left( \prod_{-\alpha\in \weights_{\bar{\goh}}(\bar\gob)} \frac{1}{1-y^{\alpha}}\right)  
\sum_{w\in \mathbf{W}(\bar \gol)} \sign(w) y^{w (\mu+\rho_{\bar\gol}) -\rho_{\bar\gol}} \quad ,
\end{eqnarray}
where $\rho_{\bar\gol}:=\frac 1 2\sum_{\alpha\in \Delta^+(\bar\gol)}\alpha $. 
Let $\gom_-$ and $\bar\gom_-$ be the nilradicals opposite to the nilradicals of $\gob$ and 
$\bar\gob$ and let $ M$ be the $\bar\goh$-module $M:=\gom_-/i(\bar \gom_-)$ under 
the $\ad\circ i$-action. We substitute \eqref{eqthBranchingMultsViaSymmetricTensors1} 
in (\ref{eqThBranchingViaSymmetricTensors}) from Theorem \ref{thBranchingMultsViaSymmetricTensors}, 
and cancel the common denominator $\prod_{\alpha\in \weights_{\bar \goh}\bar\gom_-} \frac{1 }{1-y^\alpha}$ 
from both sides to obtain
\begin{eqnarray}\label{eqBranchingSeries1}
&&\left( \prod_{\substack{ \alpha \in\weights_{\bar\goh} M }}\frac{1}{1-y^{\alpha}}\right) 
\sum_{w\in \mathbf{W}(\gol)}\notag \sign(w)y^{\pr(w(\lambda+\rho_\gol)-\rho_\gol)} \\&&= 
\sum_\mu m(\mu,\lambda) \sum_{w\in \mathbf{W}(\bar \gol)} 
\sign(w) y^{w (\mu+\rho_{\bar\gol})-\rho_{\bar\gol}}\quad ,
\end{eqnarray}
where $\mu$ runs over the weights integral with respect to $\bar{\gos}=[\bar{\gol},\bar{\gol}]$ 
and dominant with respect to $\bar\gob\cap \bar{\gos}$.

Set $X:=\weights_{\bar\goh} M$ (with multiplicities) 
and let $P_X$ be the vector partition function 
with respect to $X$. 

Fix a  weight $\mu$ dominant with respect to $\bar\gob\cap\bar\gos$. In the 
right hand side of \eqref{eqBranchingSeries1}, among all monomials of the 
form $m(\mu,\lambda) \sign(w) y^{w(\mu+\rho_{\bar{\gol}})-\rho_{\bar{\gol}}}$, the only monomial 
whose exponent is dominant with respect to $\bar \gob\cap\bar\gos$ is $m(\mu,\lambda)y^\mu$, obtained for $w=id$.
Therefore we can compare the coefficients in front of all $y^\gamma$
for which $\gamma$ is  $\bar\gob\cap\bar\gol$-dominant to get that 
\begin{eqnarray*}\label{eqBranchingMultGeneral}
m(\mu,\lambda)&=&\sum_{\substack{w\in \mathbf{W}(\gol) \\ 
\mu \mathrm{~is~}\bar{\gob}\cap\bar\gol-\mathrm{dominant}}}\sign(w)
P_X(\mu - \pr(w (\lambda+\rho_\gol)-\rho_\gol))\notag\\
&=&\sum_{\substack{w\in \mathbf{W}(\gol) \\ 
\mu \mathrm{~is~}\bar{\gob}\cap\bar\gol-\mathrm{dominant}}}\sign(w)
P_X(u_w( \mu, \lambda)+ \tau_w)\quad ,
\end{eqnarray*}
where we have set 
\begin{eqnarray*}
\tau_w:=-\pr( w(\rho_\gol) -\rho_\gol),\, \bar u_w:=-\pr\circ w,\, 
u_w(\mu,\lambda):=\mu+\bar u_w(\lambda)\quad .
\end{eqnarray*} 
By standard results on vector partition functions (see \cite{PartialFractions} 
and the references therein), $P_X$ is a piecewise quasi-polynomial  of degree less than or 
equal to the number of elements of $X$ minus the dimension of the 
ambient vector space, i.e.,
$\dim M - \dim \bar \goh =\dim \gom_-  - \dim \bar \gom_- -\dim \bar \goh=
\frac 1 2(\dim \gog-\dim \goh) - \frac 1 2(\dim \bar\gog-\dim\bar \goh)-\dim \bar \goh=
\frac{1}{2}\left(\dim \gog- \dim\bar\gog-\dim \goh-\dim \bar\goh\right)$.
\end{proof}

\textbf{Example.} For $\LieGtwo\hookrightarrow so(7)$, we have that 
$\frac{1}{2}(\dim \gog- \dim\bar\gog-\dim \goh-\dim \bar\goh)=1$ 
and the piecewise quasi-polynomial $m(\mu, \lambda)$ is linear. 
For $\mathbb C\oplus so(2n+1)\hookrightarrow so(2n+2)$, 
$so(2n)\hookrightarrow so(2n+1)$ and $gl(n)\hookrightarrow sl(n+1)$ 
we have that $\frac{1}{2}(\dim \gog- \dim\bar\gog-\dim \goh-\dim \bar\goh)=0$, 
and the piecewise quasi-polynomial  $m(\mu, \lambda)$  is bounded by a constant; 
in those cases, for finite dimensional representations, we know 
from the classical branching rules that the constant is 1 (``multiplicity free branching'').

\section{Constructing $\bar\gob$-singular vectors in $M_\lambda (\gog, \gop)$ }\label{secConstructingSingularVectors}

We recall that $V_{\lambda}(\gol)$ denotes the finite dimensional $\gop$-module inducing $M_\lambda (\gog, \gop)$.
By Weyl's semisimplicity theorem $V_{\lambda}(\gol)$ decomposes as a direct sum of irreducible 
$\bar \gol$-modules (we recall that $\bar \gol= i^{-1}(i(\bar{\gog})\cap \gol)$). 
By \eqref{eqSplitCharSVlambdagol}, the component $V_\mu(\bar\gol)$ of highest weight $\mu$ 
in the $\bar\gol$-decomposition of $V_\lambda(\gol)$ appears with  multiplicity $n(\mu,\lambda)$. 
By Theorem \ref{thBranchingMultsViaSymmetricTensors}, the multiplicity $m(\mu,\lambda)$ of 
$M_{\mu}(\bar\gog, \bar \gop)$ in $M_{\lambda}(\gog,\gop)$ is equal to the $\bar \gol$-multiplicity 
of $ V_\mu (\bar\gol)$ in $S^\star(\gon_-/N)\otimes V_{\lambda}(\gol)$. 
Therefore $m(\mu,\lambda)\geq n(\mu,\lambda)$.

With some assumptions, in the present Section we assign 
to each $\bar\gob\cap\bar\gol $-singular vector $v$ in $V_\lambda(\gol)$ 
a $\bar \gob$-singular vector $v'$ in $M_\lambda(\gog,\gop)$.
More precisely, assuming that $\lambda$ satisfies Condition B 
(Definition \ref{defSufficientlyGenericWeight} below), given a weight $\mu\in \bar\goh^*$
we show a method for constructing $n(\mu,\lambda)$ linearly independent $\bar{\gob}$-singular vectors 
in $M_\lambda (\gog, \gop)$. In the particular case that $\gop$ has a finite branching problem over $i(\bar{\gog})$
(Definition \ref{defFiniteBranchingProblem} below), it follows that $n(\mu,\lambda)=m(\mu,\lambda)$ and
the construction exhausts all $\bar\gob$-singular vectors of weight $\mu$ 
for which $m(\mu,\lambda)\neq 0$.

\begin{definition}\label{defFiniteBranchingProblem}
We say that the parabolic subalgebra $\gop\subset\gog$ has a finite branching problem over $i(\bar{\gog})$ if
for $\lambda\in \goh^*$ there are only finitely many $\mu\in \goh^*$ with  $m(\mu, \lambda)\neq 0$. 
\end{definition}
The above definition is equivalent to requesting that $M_{\lambda}(\gog,\gop)$ 
has finite Jordan-H\"older series as a $\bar\gog$-module. 
In view of Theorem \ref{thBranchingMultsViaSymmetricTensors}, 
it is also equivalent to requesting that $\dim \gon=\dim\bar\gon $. 
Theorem \ref{thBranchingMultsViaSymmetricTensors} furthermore implies that the definition 
does not depend on the choice of inducing representation. 
A non-trivial example of Definition \ref{defFiniteBranchingProblem} is given by 
$\LieAlgPair{\LieGtwo}{so(7)}$ for $\gop\simeq \gop_{(1,0,0)}$.

Our construction is carried out in two steps. First, we decompose the inducing finite 
dimensional representation $V_\lambda(\gol)$ over $\bar\gol$ by producing a 
spanning set for the $\bar\gob\cap\bar{\gol}$ singular vectors in 
$V_\lambda(\gol)$. Second, we project the obtained 
$\bar\gob\cap\bar{\gol}$-singular vectors to $\bar\gob$-singular, 
using an element  in the center of $U(\bar{\gog})$. 

We need the following definition.
\begin{definition}\label{defgobmaximalWeight}
Let $P\in \mathbb Q[[y^\gamma]]_{\gamma\in\goh^*}$. We say that 
$\mu\in \bar{\goh}^*$ is $\bar{\gob}$-maximal in $P$ if $y^\mu$ 
has non-zero coefficient in $P$ and we have that 
$y^{\mu+\alpha}$ has coefficient zero in $P$ for all positive roots $\alpha$ of $\bar{\gog}$.
\end{definition}
We describe a procedure for computing
the decomposition (\ref{eqSplitCharSVlambdagol}).
Consider the $\gop$-module $V_\lambda(\gol)$ inducing $M_\lambda(\gog,\gop)$. 
We compute $\CharL V_\lambda(\gol)$ using 
the Freudenthal formula with respect to 
$\gol$ (see e.g., \cite[\S 22.3, 22.4]{Humphreys}). Then we compute the projection 
$\Pr(\CharL V_\lambda(\gol)) = \CharS V_{\lambda}(\gol)$. 
Next we find a $\bar\gob$-maximal 
weight $\mu_1$ in $\Pr(\CharL V_\lambda(\gol))$. 
Then $V_{\mu_1} (\bar\gol)$ appears as a summand in the 
$\bar\gol$-decomposition of $V_\lambda(\gol)$. 
Next we compute $\CharS V_{\mu_1} (\bar\gol)$ using the 
Freudenthal formula with respect to $\bar \gol$ and 
subtract $\CharS V_{\mu_1}(\bar\gol) $ from 
$\CharS  V_\lambda(\gol)$. If we obtain zero, we are done, 
otherwise we find another $\bar\gob$-maximal 
weight $\mu_2$ in the remaining character 
$\CharS V_\lambda(\gol)-\CharS V_{\mu_1}(\bar{\gol})$ 
and subtract $\CharS V_{\mu_2}(\bar{\gol})$, and so on. 
On each step of the algorithm, the remaining character 
is a finite sum of monomials with non-negative integer coefficients. 
Furthermore, on each step the sum of the coefficients of all monomials 
decreases strictly, and therefore the algorithm 
terminates after a finite number of steps. 
By recording the intermediate summands $\CharS V_\mu (\bar \gol)$ appearing 
in the algorithm, we obtain the decomposition \eqref{eqSplitCharSVlambdagol}.

We are now in a position to compute the $\bar\gob\cap\bar\gol$-singular 
vectors of $V_\lambda(\gol)$
using linear algebra and the remarks on finite 
dimensional representations in 
Section \ref{secNotationPreliminaries}. Indeed, for each weight $\mu\in \bar\goh^*$ 
with $n(\mu,\lambda)>0$, we compute the intersection of 
the eigenspaces of the linear operators
given by the $\cdot$ action of $ i( \bar g_\alpha), i(\bar h_\alpha)-\mu(\bar h_\alpha) $ 
on  $V_\lambda(\gol)$,
where $\alpha$ runs over
the simple positive roots of $\bar \gol $.

We discuss shortly 
the center of $U(\bar{\gog})$.
For an arbitrary semisimple Lie algebra $\bar \gog$, 
the center of $U(\bar\gog)$ is a polynomial algebra 
(of rank equal to $\dim\bar{\goh}$), as follows from 
the Harish-Chandra homomorphism Theorem 
(\cite[\S 23.3]{Humphreys}) and a Theorem of Chevalley 
(\cite[Chapter 3]{Humphreys:ReflectionGroups}). 
Let $\bar c_1,\dots, \bar c_k$ with $k=\dim \bar \goh$ 
be a basis of the center of $U(\bar\gog)$ consisting 
of homogeneous elements, where $\bar c_1$ is chosen to 
be the quadratic Casimir element. The elements 
$\bar c_j$ are known, a list of their degrees for each type can 
be found in \cite[page 260]{PopovVinberg:InvariantTheory}. 
A discussion of how to construct a basis 
$\bar c_1,\dots, \bar c_k$ can be found in \cite{BincerRiesselmann:CasimirG2} 
and the references therein.

For an arbitrary $\mu\in\bar{\goh}^*$, the elements of the center of $U(\bar \gog)$ act by 
scalars on the Verma module $M_\mu(\bar{\gog}, \bar{\gob})$. 
Let the constant by which $\bar c_j$ acts on 
$M_\mu(\bar{\gog}, \bar{\gob})$ be $p_j(\mu)$, i.e.,
\[
p_{j}(\mu) v=\bar c_j\cdot v \quad\mathrm{for~all~}v\in M_\mu(\bar{\gog}, \bar{\gob})\quad .
\]
We say that two weights $\mu$ and $\nu$ in $\bar\goh^*$ are \emph{linked} 
if there exists an element $w$ of 
the Weyl group of $\bar{\gog} $ such that $w(\mu+\rho_{\bar\gog})-\rho_{\bar\gog}=\nu$, 
where $\rho_{\bar\gog}$ 
is the half-sum of the positive roots of $\bar\gog$. 
By the Harish-Chandra homomorphism Theorem, $\mu$ and $\nu$ 
are linked if and only if $p_j(\mu)= p_j( \nu)$ for $j=1,\dots, \dim \bar{\goh}$. 

Let $Q_\lambda$ denote the ``top level'' $\bar{\gog}$-submodule of 
$M_\lambda(\bar\gog,\bar{\gop})$ given by
\begin{equation}\label{eqTopLevel}
Q_\lambda:=i(U(\bar{\gog}))\otimes_{i(U(\bar\gop))} V_\lambda(\gol) \quad ,
\end{equation}
where $V_\lambda(\gol)$ is equipped with the $\bar{\gop}$ action induced from the embedding $i$. 
We note that $Q_\lambda$ lies in the BGG category $\mathcal O^{\bar\gop}$.

Before we proceed with the construction of $\bar\gob$-singular 
vectors in $Q_\lambda$, we note the following.
\begin{proposition}\label{propFiniteBranchingVermaEqualsTopPart}
Suppose the parabolic subalgebra $\gop\subset\gog$ has finite branching problem over $i(\bar{\gog})$. 
Let $\lambda\in \goh^*$ be dominant with respect to
$\gob\cap \gos$ and integral with respect to $\gos$. 
Then we have the $\bar\gog$-module isomorphism 
\[
Q_\lambda \simeq M_{\lambda}(\gog, \gop)\quad .
\]
\end{proposition}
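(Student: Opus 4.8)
The plan is to realize $Q_\lambda$ as the image of a tautological $\bar\gog$-homomorphism into $M_\lambda(\gog,\gop)$ and to prove that map is bijective, with surjectivity established on the level of the PBW-associated graded module and injectivity read off from equality of $\bar\goh$-characters. Concretely, the subspace $1\otimes V_\lambda(\gol)\subset M_\lambda(\gog,\gop)$ is a $\bar\gop$-module under $i$, so by the universal property of induction there is a $\bar\gog$-homomorphism $\phi\colon Q_\lambda\to M_\lambda(\gog,\gop)$ restricting to the identity on $V_\lambda(\gol)$; its image is exactly the $\bar\gog$-submodule generated by $V_\lambda(\gol)$, i.e. the ``top level''. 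The entire content of the Proposition is that $\phi$ is an isomorphism.

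The heart of the argument is to prove $\Imm\phi=M_\lambda(\gog,\gop)$ by passing to associated graded modules. Using \eqref{eqGenVermaIsUnTimesV}, filter $M_\lambda(\gog,\gop)\cong U(\gon_-)\otimes V_\lambda(\gol)$ by degree in $U(\gon_-)$, so that $\gr M_\lambda(\gog,\gop)\cong S^\star(\gon_-)\otimes V_\lambda(\gol)$. With respect to $\gog=\gon_-\oplus\gol\oplus\gon$ write, for $\bar g_{-\alpha}\in\bar\gon_-$, the element $i(\bar g_{-\alpha})=n_{-\alpha}+\ell_{-\alpha}+n^{+}_{-\alpha}$ with $n_{-\alpha}\in\gon_-$, $\ell_{-\alpha}\in\gol$, $n^{+}_{-\alpha}\in\gon$. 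On $M_\lambda(\gog,\gop)$ the summand $n_{-\alpha}$ raises the filtration degree by one (left multiplication), while $\ell_{-\alpha}$ preserves it and $n^{+}_{-\alpha}$ lowers it, so the leading symbol of $i(\bar g_{-\alpha})$ on $\gr M_\lambda(\gog,\gop)$ is multiplication by $n_{-\alpha}$. Hence $\gr(\Imm\phi)$ is a graded subspace of $S^\star(\gon_-)\otimes V_\lambda(\gol)$ that contains $S^0(\gon_-)\otimes V_\lambda(\gol)$, is stable under the degree-preserving action of $i(\bar\gol)\subset\gol$, and is stable under multiplication by every $n_{-\alpha}$. I would then invoke the finite branching hypothesis: it is equivalent to $\dim\gon=\dim\bar\gon$, i.e. to $\gon_-/N=0$, so the $\bar\gol$-submodule $N\simeq\bar\gon_-$ of Lemma \ref{leLargeNilradicalHasLsubmoduleIsoToSmallerNilradical} is all of $\gon_-$; since $N$ is generated over $\bar\gol$ by the $\gon_-$-components of $i(\bar\gon_-)$, the vectors $n_{-\alpha}$ generate $\gon_-=S^1(\gon_-)$ as a $\bar\gol$-module. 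An induction on degree now gives $\gr_d(\Imm\phi)=S^d(\gon_-)\otimes V_\lambda(\gol)$ for all $d$: granting the case $d-1$, the subspace contains $\{n_{-\alpha}\}\cdot\gr_{d-1}(\Imm\phi)$ and is $\bar\gol$-stable, and the purely representation-theoretic fact that a $\bar\gol$-generating set $A$ of a module $M$ makes $A\otimes N$ generate $M\otimes N$ (a consequence of the reductivity of $\bar\gol$ via Hom-tensor adjunction) shows, after composing with the surjection $S^1(\gon_-)\otimes\bigl(S^{d-1}(\gon_-)\otimes V_\lambda(\gol)\bigr)\to S^d(\gon_-)\otimes V_\lambda(\gol)$, that it is all of $S^d(\gon_-)\otimes V_\lambda(\gol)$. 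As the filtration is exhaustive with finite dimensional graded pieces, $\Imm\phi=M_\lambda(\gog,\gop)$.

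Injectivity I would settle by a character count. As a vector space $Q_\lambda\cong U(\bar\gon_-)\otimes V_\lambda(\gol)$, so by \eqref{eqSplitCharSVlambdagol} and \eqref{eqGenVermaCharSmall} one gets $\CharS Q_\lambda=\sum_\mu n(\mu,\lambda)\,\CharS M_\mu(\bar\gog,\bar\gop)$. Finite branching forces $\gon_-/N=0$, whence $m(\mu,\lambda)=n(\mu,\lambda)$ by Definition \ref{defm(lambda,mu)}, and also $0\notin\mathcal C'$, so Theorem \ref{thBranchingMultsViaSymmetricTensors} applies and yields $\CharS M_\lambda(\gog,\gop)=\sum_\mu m(\mu,\lambda)\,\CharS M_\mu(\bar\gog,\bar\gop)=\CharS Q_\lambda$. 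A surjection of $\bar\goh$-weight modules whose weight spaces are finite dimensional of equal dimension must be an isomorphism, so $\phi$ is an isomorphism.

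I expect the surjectivity step to be the real obstacle. Because $\gop$ need not be compatible with $i(\bar\gog)$, the operators $i(\bar\gon_-)$ are genuinely mixed and do not preserve the nilradical, so one cannot run the surjectivity argument directly on $M_\lambda(\gog,\gop)$; the point is that passage to the associated graded cleanly isolates the degree-raising part $n_{-\alpha}$, and that the finite branching hypothesis is exactly what guarantees these leading parts span all of $\gon_-$ and hence generate the whole graded module.
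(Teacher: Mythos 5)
Your proof is correct, but it takes a genuinely different route from the paper's. The paper works directly inside $M_\lambda(\gog,\gop)$ with the monomial basis coming from \eqref{eqGenVermaIsUnTimesV} and exploits a rigidity that finite branching forces and that your write-up never isolates: since $\pr$ then maps $\weights_{\goh}\gon_-$ bijectively onto $\weights_{\bar\goh}\bar\gon_-$ and no root of $\gon$ projects to a weight of $\bar\gon_-$, each $i(\bar g_{\alpha_j})$ with $\alpha_j\in \weights_{\bar\goh}\bar\gon_-$ equals $a_j g_{\beta_j}+u_j$ with $a_j\neq 0$, $g_{\beta_j}$ a \emph{single} root vector of $\gon_-$, and $u_j\in\gol$. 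Consequently the vectors $i(\bar g_{\alpha_1}^{n_1}\cdots \bar g_{\alpha_l}^{n_l})\cdot v_s$ are, modulo lower total degree, nonzero multiples of the basis monomials $g_{\beta_1}^{n_1}\cdots g_{\beta_l}^{n_l}\cdot v_s$; this one triangularity statement yields spanning and linear independence simultaneously, i.e.\ surjectivity and injectivity of your $\phi$ in a single stroke, with no input from the character formulas. You instead split the problem: surjectivity through the associated graded module plus a $\bar\gol$-generation lemma, and injectivity through the character identity of Theorem \ref{thBranchingMultsViaSymmetricTensors} (a legitimate appeal, since that theorem is proved independently of this Proposition, and $0\notin\mathcal C'$ does hold here because under finite branching the $\bar\goh$-weights of $\gon_-$ are negative roots of $\bar\gog$). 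Your route buys conceptual robustness (the symbol argument never needs the leading parts $n_{-\alpha}$ to be single root vectors) and the identity $\CharS Q_\lambda=\sum_\mu n(\mu,\lambda)\CharS M_\mu(\bar\gog,\bar\gop)$ as a by-product; the paper's route buys economy and an explicit basis correspondence, which is what its later computations actually use.

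Two smaller points. First, your claim that the $\gon$-component $n^+_{-\alpha}$ \emph{lowers} the filtration degree is not quite right: for $n^+\in\gon$ and a monomial $u\in U(\gon_-)$ of degree $d$ one has $n^+ u\otimes v=[n^+,u]\otimes v$, and a bracket $[n^+,g_{-\beta}]$ may land back in $\gon_-$, so the $\gon$-part in general only \emph{preserves} the degree; your argument survives because all it needs is that neither the $\gol$- nor the $\gon$-part raises the degree. Second, the Hom-tensor generation lemma is more machinery than you need: the map $\bar\gon_-\to\gon_-$ sending $\bar g_{-\alpha}$ to $n_{-\alpha}$ is the composition of $i$ with the $\bar\gol$-equivariant projection $\gog\to\gon_-$ along $\gop$, its kernel is $\bar\gon_-\cap\bar\gop=\{0\}$, and by the dimension count it is therefore a $\bar\gol$-module isomorphism; hence the $n_{-\alpha}$ already form a vector-space basis of $\gon_-$, and your induction on degree closes by pure linear algebra. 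That isomorphism is exactly the structural fact on which the paper's triangular argument rests.
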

\begin{proof} By Lemma \ref{leLargeNilradicalHasLsubmoduleIsoToSmallerNilradical}, 
the map $\pr$ maps $\weights_{\goh}\gon_-$ 
bijectively onto  $\weights_{\bar\goh}\bar\gon_-$. Let  $\beta_1,\dots, \beta_l$ 
be elements of the ordered tuple
$\weights_{\goh}\gon_-$, and let $\alpha_1,\dots, \alpha_l$ be the 
elements of $\weights_{\bar{\goh}}\bar\gon_-$, such that $\pr(\beta_i)=\alpha_i$. 
Let $v_1,\dots, v_k$ be a basis of $V_\lambda(\gol)$.
By \eqref{eqGenVermaIsUnTimesV} the set 
\[
A:= \{g_{\beta_1}^{n_1}\dots g_{\beta_l}^{n_l}\cdot v_s| n_i\in \mathbb Z_{\geq 0}, s=1,\dots, k\}
\]
is a vector space basis of $M_{\lambda}(\gog,\gop)$. 
Define a partial order $\succ $ 
among the monomials $g_{\beta_1}^{n_1}\dots g_{\beta_l}^{n_l}\cdot v_s$ by requesting 
that $g_{\beta_1}^{n_1}\dots g_{\beta_l}^{n_l}\cdot v_s\succ g_{\beta_1}^{m_1}\dots g_{\beta_l}^{m_l}\cdot v_t$
if and only if $\sum_{i=1}^l n_i >\sum_{i=1}^l m_i$.
The partial order does not depend on $s$ and $t$.
Consider the element 
\[
m(n_1,\dots, n_l, s):= i(\bar g_{\alpha_1}^{n_1}\dots \bar g_{\alpha_l}^{n_l})\cdot v_s\quad .
\]
Lemma 
\ref{leLargeNilradicalHasLsubmoduleIsoToSmallerNilradical} 
and the finiteness of the branching problem
imply the vector space isomorphism 
$i(\bar{\gon})\simeq (\gon+\gol)/\gol$. Therefore 
by the proof of Lemma \ref{leLargeNilradicalHasLsubmoduleIsoToSmallerNilradical}, for each index $j$ 
there exists a non-zero complex number $a_j$ and an element $u_j\in \gol$ such that 
$i(\bar g_{\alpha_j})\in a_j g_{\beta_j}+u_j $ and so
\begin{equation}\label{eqFiniteBranchingPBWbasis}
m(n_1,\dots, n_l, s)= (a_1 g_{\beta_1}+u_1)^{n_1}\dots (a_l g_{\beta_l}+u_l)^{n_l}\cdot v_s\quad .
\end{equation}
Rewrite $m(n_1,\dots, n_l, s)$ in the monomial basis $A$. As $\gon_-$ is 
an ideal in $\gol \oplus \gon_-$, \eqref{eqFiniteBranchingPBWbasis} 
and the PBW theorem imply that $m(n_1,\dots, n_l, s)$ has 
unique $\succ$-maximal monomial in the basis $A$, proportional to 
$g_{\beta_1}^{n_1}\dots g_{\beta_l}^{n_l}\cdot v_s$.
Therefore a straightforward filtration argument shows that, 
for fixed $P\in \mathbb Z_{\geq 0}$, 
the linear span of 
\[\{m(n_1,\dots, n_i,s)| \sum_{i=1}^l n_i\leq P, s=1,\dots, k\}
\]
equals the linear span of 
\[\{g_{\beta_1}^{n_1}\dots g_{\beta_l}^{n_l}\cdot v_s|\sum_{i=1}^l n_i\leq P, s=1,\dots, k\}
\] which proves the statement.
\end{proof}

In order to associate $\gob\cap \gol$-singular vectors of $V_{\lambda}(\gol)$ to 
$\gob$-singular vectors in $M_\lambda(\gog,\gop)$, 
we need the following condition on $\lambda$.
\begin{definition}[Condition B]\label{defSufficientlyGenericWeight}
Let $\lambda\in \goh^*$ be integral with respect to  $\gos=[\gol,\gol]$ 
and dominant with respect to $\gob\cap \gos$. 
We say that $\lambda$ satisfies Condition B if for each pair of 
$\bar{\goh}$-weights $\mu \neq \nu$ for which 
$n(\mu,\lambda)>0$ and  $n(\nu,\lambda)>0$,
we have that $\nu$ and $\mu$ are not linked.

We say that $\lambda$ satisfies the strong Condition B if for all pairs 
$\mu \neq \nu$ with $n(\mu,\lambda)>0$ and  $n(\nu,\lambda)>0$, 
we have that $p_1(\mu)\neq p_1(\nu)$, where $p_1(\mu), p_1(\nu)$ 
are the constants given by the action of the quadratic Casimir element $\bar c_1$ of $\bar{\gog}$ on 
$M_\mu(\bar\gog, \bar{\gob})$, $M_\nu(\bar\gog, \bar{\gob})$.
\end{definition}

We note that, for the two extreme cases, the full parabolic 
subalgebra  $\bar\gop\simeq\bar\gog$, and 
$\bar \gop\simeq \bar \gob$, every possible weight $\lambda$ 
satisfies Condition B. Indeed, for the 
full parabolic $\bar{\gop}=\bar\gog$, all $\mu$ for which $n(\mu,\lambda)>0$ 
are dominant with respect 
to $\bar\gob$ and integral with respect to $\bar{\gog}$ 
and therefore cannot be pairwise 
linked. For the minimal parabolic $\bar\gop\simeq \bar\gob$, 
the inducing module is one-dimensional  and the condition 
is trivially satisfied. 

\begin{theorem}\label{thSufficientlyGenericWeightEnablesFDbranching}
Let $\lambda\in \goh^*$ be a integral with respect to $\gos=[\gol,\gol]$ 
and dominant with respect to $\gob\cap\gos$. 
Suppose $\lambda$ satisfies Condition B (Definition \ref{defSufficientlyGenericWeight}). 
Let $\mu\in \bar{\goh}^*$ such that $n(\mu,\lambda)> 0$.
Then there exists an element $\bar d$ in the center of $U(\bar \gog)$, 
such that $i(\bar d)\cdot ( 1\otimes_{U(\gop)}  v_\mu) $ is a non-zero $\bar\gob$-singular vector 
in $M_{\lambda}(\gog,\gop)$ for any $\bar\gob\cap\bar\gol$-singular vector $v_\mu$ 
in $V_\lambda(\gol)$ of $\bar \goh$-weight $\mu$.
\end{theorem}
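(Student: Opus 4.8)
The plan is to realise the required vector as the projection of $1\otimes v_\mu$ onto a single central--character block of the top--level submodule $Q_\lambda$, and then to produce that projection by a central element via the Harish--Chandra isomorphism. The starting observation is a local computation at the top level. Since $i(\bar\gop)\subset\gop=\gol\oplus\gon$, with $\gon$ annihilating $1\otimes V_\lambda(\gol)$ and $\gol$ preserving it, the space $1\otimes V_\lambda(\gol)$ is an $i(\bar\gop)$--submodule of $M_\lambda(\gog,\gop)$; thus $Q_\lambda$, defined in \eqref{eqTopLevel}, is the induced module $i(U(\bar\gog))\otimes_{i(U(\bar\gop))}V_\lambda(\gol)$ and lies in $\mathcal O^{\bar\gop}$. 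Fix a grading element $\bar h_0\in\bar\goh$ defining $\bar\gop$, so that $\alpha(\bar h_0)>0$ for all $\alpha\in\weights_{\bar\goh}\bar\gon$ and $\bar h_0$ is central in $\bar\gol$. From $[i(\bar h_0),i(\bar g_\alpha)]=\alpha(\bar h_0)\,i(\bar g_\alpha)$ one sees that $i(\bar\gon)$ strictly raises the $i(\bar h_0)$--eigenvalue, and in particular it carries the eigenspace of eigenvalue $d(\nu):=\nu(\bar h_0)$ into eigenspaces of strictly larger eigenvalue.

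Next I would build a $\bar\gog$--filtration of $Q_\lambda$ whose subquotients are honest generalized Verma modules. Ordering the eigenvalues $D_1>\dots>D_s$ occurring in $V_\lambda(\gol)$, let $W_p$ be the sum of the $\bar h_0$--eigenspaces with eigenvalues $D_1,\dots,D_p$. By the previous paragraph $W_p$ is an $i(\bar\gop)$--submodule, and on the quotient $W_p/W_{p-1}\cong\bigoplus_{d(\nu)=D_p}V_\nu(\bar\gol)^{n(\nu,\lambda)}$ the nilradical $\bar\gon$ acts trivially, since it raises into $W_{p-1}$. Because $U(\bar\gog)$ is free over $U(\bar\gop)$, induction is exact, so $Q_\lambda$ acquires a $\bar\gog$--filtration with subquotients $\bigoplus_{d(\nu)=D_p}M_\nu(\bar\gog,\bar\gop)^{n(\nu,\lambda)}$. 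Let $\chi_\nu$ denote the central character by which the center of $U(\bar\gog)$ acts on $M_\nu(\bar\gog,\bar\gop)$. The central characters occurring are exactly the $\chi_\nu$ with $n(\nu,\lambda)>0$, and Condition B (Definition \ref{defSufficientlyGenericWeight}) asserts these are pairwise distinct. Since an extension of two modules in $\mathcal O^{\bar\gop}$ with disjoint central--character supports splits canonically into its central--character blocks, the whole filtration splits and $Q_\lambda\cong\bigoplus_\nu M_\nu(\bar\gog,\bar\gop)^{n(\nu,\lambda)}$, with the $\chi_\mu$--block equal to $M_\mu(\bar\gog,\bar\gop)^{n(\mu,\lambda)}$; in particular the center acts on each block by the exact scalar $\chi_\nu$, with no nilpotent part.

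Now $1\otimes v_\mu$ lies in the top $\bar h_0$--eigenspace of its level (eigenvalue $d(\mu)$), so its image in the corresponding subquotient is the highest weight generator of one copy of $M_\mu(\bar\gog,\bar\gop)$. Hence its projection $\Pi_{\chi_\mu}(1\otimes v_\mu)$ onto the $\chi_\mu$--block corresponds, under the isomorphism above, to that highest weight vector; it is therefore nonzero and $\bar\gob$--singular of weight $\mu$, being annihilated by $i(\bar\gom\cap\bar\gol)$ (as $v_\mu$ is $\bar\gob\cap\bar\gol$--singular) and by $i(\bar\gon)$ (by construction of a generalized Verma module). Finally, since the center of $U(\bar\gog)$ is a polynomial algebra isomorphic to $\mathbb C[\bar\goh^*]^{\mathbf W(\bar\gog)}$ by the Harish--Chandra and Chevalley theorems, and the finitely many $\chi_\nu$ are distinct points of its spectrum, I would choose by interpolation a central element $\bar d$ with $\chi_\mu(\bar d)=1$ and $\chi_\nu(\bar d)=0$ for the remaining $\nu$ with $n(\nu,\lambda)>0$. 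Because the center acts by exact scalars on each block, $i(\bar d)$ realises the projection $\Pi_{\chi_\mu}$ on $Q_\lambda$, and therefore $i(\bar d)\cdot(1\otimes v_\mu)=\Pi_{\chi_\mu}(1\otimes v_\mu)$ is exactly the asserted nonzero $\bar\gob$--singular vector.

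The step I expect to be the genuine obstacle is the nonvanishing $\Pi_{\chi_\mu}(1\otimes v_\mu)\neq0$. A crude block decomposition does not by itself prevent $1\otimes v_\mu$ from being absorbed into the submodule generated by the higher--$\bar h_0$ top components, precisely because $i(\bar\gon_-)$ has a $\gol$--component that can descend within the top level; trying to separate blocks by hand runs into these descents. The device that removes the difficulty is to filter the \emph{inducing} module $V_\lambda(\gol)$ by the $\bar h_0$--grading \emph{before} inducing: this forces the subquotients to be genuine generalized Verma modules $M_\nu(\bar\gog,\bar\gop)$, on which the center acts by scalars (so no nilpotent corrections survive), and it pins down the image of $1\otimes v_\mu$ as an explicit highest weight vector, making both singularity and nonvanishing transparent.
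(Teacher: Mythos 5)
Your proof is correct and reaches the paper's conclusion, but it is organized genuinely differently, so a comparison is worthwhile. Both arguments rest on the same two pillars: the filtration of the inducing module $V_\lambda(\gol)$ by eigenvalues of a grading element of $\bar\gop$ (your $W_p$ is literally the paper's $W_s$), and the fact that Condition B, via the Harish--Chandra theorem, separates the central characters $\chi_\nu$ attached to the weights $\nu$ with $n(\nu,\lambda)>0$. The executions then diverge. The paper never decomposes $Q_\lambda$: it runs a descending induction over the levels, choosing for each $\bar\gob\cap\bar\gol$-singular weight $\nu$ lying above the level of $\mu$ a central element $\bar d_\nu$ annihilating the Verma module $M_\nu(\bar\gog,\bar\gob)$ but not $M_\mu(\bar\gog,\bar\gob)$, showing by an explicit PBW computation that $i(\bar d_\nu)$ pushes the level-$s$ generators into $Q_{s-1}$, and finally taking $\bar d$ to be the product of all these factors. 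You instead identify the subquotients of the induced filtration as $\bigoplus_{d(\nu)=D_p} M_\nu(\bar\gog,\bar\gop)^{n(\nu,\lambda)}$ by exactness of induction, split the filtration into central-character blocks, and obtain $\bar d$ by interpolation as the projector onto the $\chi_\mu$-block. Your route buys a stronger structural byproduct: $Q_\lambda\cong\bigoplus_\nu M_\nu(\bar\gog,\bar\gop)^{n(\nu,\lambda)}$ under Condition B alone, which together with Proposition \ref{propFiniteBranchingVermaEqualsTopPart} recovers Corollary \ref{corGenVermaDecompo} without the multiplicity-one hypothesis. The paper's route buys explicitness: $\bar d$ is a concrete product of annihilators, which under the strong Condition B specializes to the Casimir projectors $\prod_\nu\bigl(i(\bar c_1)-p_1(\nu)\bigr)$ used verbatim in the $\LieAlgPair{\LieGtwo}{so(7)}$ computations, and it avoids invoking any block-decomposition machinery in $\mathcal O^{\bar\gop}$.

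One shared caveat, which I do not count against you because the paper's own proof does exactly the same: both arguments treat $Q_\lambda$ and the $Q_p$, ``realized as $\bar\gog$-submodules of $M_\lambda(\gog,\gop)$,'' as honest copies of the induced modules. Concretely, your nonvanishing of the projection (and the paper's ``$\neq 0$ by construction'') needs $1\otimes v_\mu\notin Q_{p-1}$ \emph{inside} $M_\lambda(\gog,\gop)$, not merely in the abstract induced module. This is true but deserves a line of proof: the natural map $U(\bar\gog)\otimes_{U(\bar\gop)}V_\lambda(\gol)\to M_\lambda(\gog,\gop)$ is compatible with the PBW filtrations, its associated graded is the map of symmetric algebras induced by the linear map $\bar\gon_-\to\gon_-$ obtained by composing $i$ with the projection of $\gog$ onto $\gon_-$ along $\gop$, and that linear map is injective because $\bar\gon_-\cap\bar\gop=\{0\}$; hence the natural map is injective and the identification both proofs rely on is legitimate.
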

\begin{proof}
Let $\bar h$ be a grading element that defines $\bar\gop$, i.e., 
an element such that $\alpha(\bar{\goh})>0$ for 
$\alpha\in \weights_{\bar{\goh}}\bar\gon$ and $\alpha(\bar{\goh})=0$ 
for $\alpha\in \weights_{\bar{\goh}}\bar\gol$. 
Let 
\[
a:=\max_{n(\nu,\lambda)>0} \nu(\bar h)\quad .
\]

For each $s\in \mathbb Z$, let $W_{s}$ be the sum of all $\bar{\goh}$-weight 
spaces of $V_\lambda(\gol)$ of weight $\nu$ for which $\nu(\bar h)\geq a-s $.
Then $W_s$ is a $\bar\gop$-module. Set 
\[
Q_s:=i(U(\bar\gog) )\otimes_{i(U(\bar{\gop}))} W_s\quad , \quad R_s:= Q_{s}/Q_{s-1}\quad ,
\]
where $Q_{-1}:=\{0\}$, where $Q_s$ are realized as $\bar{\gog}$-submodules of $M_{\lambda}(\gog,\gop)$.

Let 
\[
\mu(\bar{h})=a-m\quad .
\] 
We will prove by induction on $s$ that 
for each $s<m$ there exists an element $\bar d_s$ in the center 
of $U(\bar\gog)$ such that $i(\bar d_s) \cdot  Q_s =0$ and 
\begin{eqnarray}
i(\bar d_s)\cdot ( 1\otimes_{U(\bar{\gop})} v_\mu ) = x( 1\otimes_{U(\bar{\gop})} v_\mu )\mod Q_{m-1}
\end{eqnarray}
for some non-zero complex number $x$, where the latter computation is carried out in $R_{m}$. 

Our induction hypothesis holds trivially for the base case 
$s=-1$ with $d_{-1}:=1$. Suppose that there exists an element 
$\bar d_{s-1}$ that satisfies the induction hypothesis. 
Let $v_\nu\in V_\lambda(\gol)$ be a $\bar\gob\cap\bar\gol$-singular 
vector of $\bar\goh$-weight $\nu$ for which $\nu(\bar{h})=a-s$. 
As Condition B holds, there exists an element $\bar  d_\nu$ such 
that $\bar d_\nu\cdot M_{\nu}(\bar\gog, \bar\gob)=\{0\}$ and 
$\bar d_\nu \cdot M_{\mu}(\bar\gog, \bar{\gob})\neq \{0\}$.

Let $\bar\gom$ be the nilradical of $\bar\gob$ and $\bar{\gom}_-$ 
its opposite nilradical. 
By the PBW theorem we have the vector space isomorphism 
$U(\bar{\gog})\simeq U(\bar\gom_-)\otimes U(\bar\goh)\otimes U(\bar{\gom})$ 
and we can write 
\begin{equation}\label{eqmminushmDecompo}
\bar d_\nu=\sum_k \bar m^-_k \bar h_k \bar m^+_k
\end{equation}
for some monomials 
$\bar m^-_k\in U(\bar\gom_-), \bar h_k\in U(\bar\goh) ,\bar m^+_k\in U(\bar{\gom})$, 
where the summands are assumed linearly independent. 
Consider the action of a summand $\bar m^-_k \bar h_k \bar m^+_k$ 
in \eqref{eqmminushmDecompo} on $1\otimes_{i(U(\bar{\gop}))} v_{\nu}$. 
As $i(\bar\gom\cap \bar\gol)\cdot v_{\nu}=0 $, it follows that whenever 
the monomial $m^+_k$ is not constant it maps $1\otimes_{i(U(\bar{\gop}))} v_{\nu}$ to $Q_{s-1}$. 
If $m^-_k$ is a non-constant monomial, $m^+_k$ is a non-constant 
monomial too (as elements of the center of $U(\bar\gog)$ have 
zero weight and the summands in \eqref{eqmminushmDecompo} are linearly independent). 
Let $u$ be the sum of the summands in \eqref{eqmminushmDecompo} 
that lie in $U(\bar{\goh})$. As we have chosen $\bar d_\nu$ to 
annihilate $M_{\nu}(\bar\gog, \bar\gob)$, we have that $ u \cdot v_\nu=0 $. 
The preceding discussion now implies that 
$i(\bar d_\nu) \cdot( 1\otimes_{i(U(\bar{\gop}))} v_\nu)\in Q_{s-1}$ 
and therefore the product $\bar d_{s-1} \bar d_\nu$ annihilates 
$1\otimes_{i(U(\bar{\gop}))} v_\nu$. In addition, the above discussion shows 
\begin{eqnarray}
i(\bar d_\nu) \cdot (1\otimes_{i(U(\bar{\gop}))} v_\mu) 
= (1\otimes_{i(U(\bar{\gop}))} u\cdot v_\mu) \mod Q_{m-1}
\end{eqnarray} 
and $\left((1\otimes_{i(U(\bar{\gop}))} u\cdot v_\mu) \mod Q_{m-1} \right)$ 
is a non-zero multiple of $((1\otimes_{i(U(\bar{\gop}))} v_\mu)\mod Q_{m-1})$ 
as $\bar d_\nu \cdot M_{\mu}(\bar\gog, \bar{\gob})\neq \{0\}$. 
Set $\bar d_s$ to be the product  
\[
\bar d_s:=\bar d_{s-1}\prod_{ n(\nu, \lambda)>0, \nu(\bar{h})=a-s } \bar d_\nu\quad .
\] 
Then $\bar d_s$ is an element with the desired properties, 
which completes our induction argument.

We claim $\bar d:=\bar d_{m-1}$ satisfies the requirements of the Theorem. 
Indeed, $i(\bar d)\cdot (1\otimes_{U(\gop)} v_\mu)\neq 0$ by the 
construction of $\bar d$, and if $\bar m\in\bar\gom $, then 
$i(\bar m) \cdot(i(\bar d)\cdot (1\otimes_{U(\gop)} v_\mu)) =i(\bar d)\cdot (i(\bar m) \cdot(1\otimes_{U(\gop)} v_\mu) )=0$.
\end{proof}

The proof of Theorem \ref{thSufficientlyGenericWeightEnablesFDbranching} and 
Proposition \ref{propFiniteBranchingVermaEqualsTopPart} imply the following.
\begin{corollary}\label{corGenVermaDecompo}
Suppose the parabolic subalgebra $\gop\subset\gog$ has finite branching problem over $i(\bar{\gog})$. 
Let $\lambda\in \goh^*$ be dominant with respect to 
$\gob\cap \gos$ and integral with respect to $\gos$. 
Suppose $\lambda$ satisfies Condition B. 
Suppose in addition $n(\mu,\lambda)\leq 1$ for all $\mu\in \goh^*$.
Then we have the $\bar{\gog}$-module isomorphism
\[
M_\lambda(\gog,\gop)\simeq \bigoplus_{n(\mu,\lambda)=1} M_{\mu}(\bar\gog, \bar\gop)\quad .
\]
\end{corollary}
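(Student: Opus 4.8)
The plan is to combine Proposition \ref{propFiniteBranchingVermaEqualsTopPart}, which identifies $M_\lambda(\gog,\gop)$ with its ``top level'' $Q_\lambda$, with the singular vectors produced by Theorem \ref{thSufficientlyGenericWeightEnablesFDbranching}, and to let the distinct central characters guaranteed by Condition B perform the splitting. First I would record two consequences of the finiteness of the branching problem. Since $\dim\gon=\dim\bar\gon$ we have $\gon_-/N=0$, so $m(\mu,\lambda)=n(\mu,\lambda)$, and Theorem \ref{thBranchingMultsViaSymmetricTensors} gives the character identity $\CharS M_\lambda(\gog,\gop)=\sum_{n(\mu,\lambda)=1}\CharS M_\mu(\bar\gog,\bar\gop)$. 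For each $\mu$ with $n(\mu,\lambda)=1$ let $v_\mu\in V_\lambda(\gol)$ be the (unique up to scale) $\bar\gob\cap\bar\gol$-singular vector of weight $\mu$, and let $w_\mu:=i(\bar d)\cdot(1\otimes_{U(\gop)}v_\mu)$ be the nonzero $\bar\gob$-singular vector of weight $\mu$ furnished by Theorem \ref{thSufficientlyGenericWeightEnablesFDbranching}.

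Next I would build a homomorphism out of each generalized Verma module. Because $\bar d$ is central it commutes with $i(U(\bar\gol))$, so $U(\bar\gol)w_\mu=i(\bar d)\cdot(1\otimes U(\bar\gol)v_\mu)=i(\bar d)\cdot(1\otimes V_\mu(\bar\gol))$ is a nonzero $\bar\gol$-quotient of the irreducible $V_\mu(\bar\gol)$, hence isomorphic to it; and since $w_\mu$ is $\bar\gob$-singular while $\bar\gon$ is an ideal of $\bar\gop$, this copy of $V_\mu(\bar\gol)$ is a $\bar\gop$-submodule on which $\bar\gon$ acts by zero. Frobenius reciprocity then yields a nonzero homomorphism $\phi_\mu:M_\mu(\bar\gog,\bar\gop)\to M_\lambda(\gog,\gop)$ with image $U(\bar\gog)w_\mu$. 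The key structural input is Condition B: the weights $\mu$ with $n(\mu,\lambda)=1$ are pairwise non-linked, so the modules $M_\mu(\bar\gog,\bar\gop)$ carry pairwise distinct central characters $\chi_\mu$. As $M_\lambda(\gog,\gop)$ has finite length in $\mathcal O^{\bar\gop}$, it splits as the direct sum of its central-character blocks, and the character identity forces the block $M_\lambda[\chi_\mu]$ to have character exactly $\CharS M_\mu(\bar\gog,\bar\gop)$, since among the inducing weights only $\mu$ itself is linked to $\mu$.

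The main obstacle, and the crux of the argument, is to show that $\phi_\mu$ is an isomorphism onto its block; since source and target have equal character it suffices to prove surjectivity. Here I would use Proposition \ref{propFiniteBranchingVermaEqualsTopPart} again to write $M_\lambda(\gog,\gop)=\sum_\nu U(\bar\gog)\cdot(1\otimes_{U(\gop)}v_\nu)$, where $\nu$ ranges over the weights with $n(\nu,\lambda)=1$, and apply the projection $\pi_{\chi_\mu}$ onto the block $\chi_\mu$. A block projection preserves $\bar\goh$-weights, so $\pi_{\chi_\mu}(1\otimes v_\nu)$ can be nonzero only when its weight $\nu$ is a weight of $M_\mu(\bar\gog,\bar\gop)$, i.e.\ only when $\nu$ is linked to $\mu$; by Condition B this forces $\nu=\mu$. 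Hence $M_\lambda[\chi_\mu]$ is generated by the single weight-$\mu$ vector $\pi_{\chi_\mu}(1\otimes v_\mu)$, which is nonzero (its image under the central $i(\bar d)$ is $w_\mu\neq0$) and therefore proportional to $w_\mu$, the $\mu$-weight space of the block being one-dimensional. Thus $M_\lambda[\chi_\mu]=U(\bar\gog)w_\mu=\operatorname{im}\phi_\mu$, so $\phi_\mu$ is surjective and, by the character equality, an isomorphism; summing over the blocks yields the asserted decomposition. I expect the only delicate bookkeeping to be the routine verifications that $\bar\gon$ annihilates $U(\bar\gol)w_\mu$ and that the finite-length block decomposition is available, both of which follow once the weight and central-character accounting above is in place.
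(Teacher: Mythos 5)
Your preparatory steps are sound and in places more careful than the paper's own terse proof: the identity $m(\mu,\lambda)=n(\mu,\lambda)$ under finite branching, the Frobenius-reciprocity construction of $\phi_\mu$ with image $U(\bar\gog)w_\mu$, and the Grothendieck-group computation showing $\CharS M_\lambda[\chi_\mu]=\CharS M_\mu(\bar\gog,\bar\gop)$ are all correct. The gap is in your crux step. You assert that $\pi_{\chi_\mu}(1\otimes v_\nu)\neq 0$ forces $\nu$ to be a weight of $M_\mu(\bar\gog,\bar\gop)$, ``i.e.\ only when $\nu$ is linked to $\mu$''. That ``i.e.'' is false: the weights of $M_\mu(\bar\gog,\bar\gop)$ are the weights of $V_\mu(\bar\gol)$ lowered by $\mZ_{\geq 0}$-combinations of weights of $\bar\gon_-$, and these are in general not linked to $\mu$. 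Consequently the vanishing $\pi_{\chi_\mu}(1\otimes v_\nu)=0$ for $\nu\neq\mu$ is not only unproved but actually false. Take the paper's pilot case $\gop_{(1,0,0)}\subset so(7)$, $\lambda=x_1\omega_1+\omega_2$ with $x_1$ generic (Theorem~\ref{leBranchingExplicit(x_1,0,0),(1,a,b)}.1), and write $\mu_0=\pr(\lambda)$, $\mu_1=\mu_0-\alpha_1-\alpha_2$, $\mu_2=\mu_0-2\alpha_1-\alpha_2$ for the three inducing weights. Here $1\otimes v_{\mu_1}=g_{-3}g_{-2}\cdot v_\lambda$, while the weight-$\mu_1$ space of the block $\chi_{\mu_1}$ is one-dimensional, spanned by $v_{\lambda,1}=((-x_1-2)g_{-3}g_{-2}-4g_{-4}+2g_{-2}g_{-1})\cdot v_\lambda$, which involves the $\gon_-$-directions $g_{-4},g_{-1}$ and so is not proportional to $1\otimes v_{\mu_1}$; the $\chi_{\mu_2}$-component of $1\otimes v_{\mu_1}$ vanishes for weight reasons ($\mu_1=\mu_2+\alpha_1$), so $\pi_{\chi_{\mu_0}}(1\otimes v_{\mu_1})=1\otimes v_{\mu_1}-c\,v_{\lambda,1}\neq 0$. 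This is possible precisely because $-\alpha_1-\alpha_2$ is a weight of $\bar\gon_-$ for $\bar\gop_{(1,0)}$, so $\mu_1$ is a weight of $M_{\mu_0}(\LieGtwo,\bar\gop_{(1,0)})$ even though $\mu_1$ is not linked to $\mu_0$. Your projection argument therefore only yields $M_\lambda[\chi_\mu]=\sum_\nu U(\bar\gog)\,\pi_{\chi_\mu}(1\otimes v_\nu)$, with the $\nu\neq\mu$ terms unaccounted for.

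The repair, which is what the paper's proof actually invokes (note its preamble cites the \emph{proofs} of Theorem~\ref{thSufficientlyGenericWeightEnablesFDbranching} and Proposition~\ref{propFiniteBranchingVermaEqualsTopPart}), is to replace the one-shot block projection by a downward induction along the grading element $\bar h$. The proof of Theorem~\ref{thSufficientlyGenericWeightEnablesFDbranching} gives more than $w_\nu\neq 0$: with $Q_s=i(U(\bar\gog))\otimes_{i(U(\bar\gop))}W_s$ the filtration used there, it gives the congruence $w_\nu\equiv x_\nu\,(1\otimes v_\nu)\bmod Q_{s_\nu-1}$ with $x_\nu\neq 0$, where $s_\nu$ is the level of $\nu$. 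Hence $U(\bar\gog)(1\otimes v_\nu)\subseteq U(\bar\gog)w_\nu+Q_{s_\nu-1}$, and induction on the level, combined with Proposition~\ref{propFiniteBranchingVermaEqualsTopPart} and $n(\nu,\lambda)\leq 1$ (which give $M_\lambda(\gog,\gop)=\sum_\nu U(\bar\gog)(1\otimes v_\nu)$), yields $M_\lambda(\gog,\gop)=\sum_\nu U(\bar\gog)w_\nu$. At that point everything else you wrote can be kept: Condition B makes the sum direct via distinct central characters, your maps $\phi_\nu$ exhibit each summand as a quotient of $M_\nu(\bar\gog,\bar\gop)$, and your character identity forces each $\phi_\nu$ to be an isomorphism. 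The missing ingredient is thus the graded congruence satisfied by the singular vectors, not merely their existence.
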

\begin{proof}
By Condition B and the fact that $n(\mu,\lambda)=1$, all summands on 
the right hand side have pairwise zero intersections. 
By Proposition \ref{propFiniteBranchingVermaEqualsTopPart} and 
Theorem \ref{thSufficientlyGenericWeightEnablesFDbranching} each of 
the modules on the right hand side is contained as a subset in the left 
hand side. By Proposition \ref{propFiniteBranchingVermaEqualsTopPart} 
the right hand side contains a vector space basis of $M_\lambda(\gog,\gop)$. 
This proves the statement.
\end{proof}

We conclude this section with a discussion on Condition B. 
Let $\mathbf x:=$ $(x_1, $ $\dots,$ $x_{\dim \goh})$ be indeterminate 
variables and let $\mathbf z:=(z_1,\dots, z_{\dim \goh})$, 
$z_i\in \mathbb Z_{\geq 0}$ be non-negative integers. 
Let $I$ be the set of simple roots of $\gog$. Let the fundamental weight that 
corresponds to the simple root $\eta_i$ be denoted by $\omega_i$. 

Let $I_\gol$ be the subset the elements of $I$ that are roots of 
$\gol$ and define
\begin{equation}\label{eqFDpartOfTheHighestWeight}
\begin{array}{rcl}
a(\mathbf x)&:=&\displaystyle\sum_{\alpha_i \in I\backslash I_\gol} x_{i} \omega_{i} \\
b(\mathbf z)&:=&\displaystyle\sum_{\alpha_i \in I_\gol} z_{i} \omega_{i}\\
\lambda(\mathbf x, \mathbf z)&:=& a(\mathbf x)+b(\mathbf z)
\quad .
\end{array}
\end{equation}

For a fixed value of $\mathbf z$, we claim that the set of 
the $\lambda$'s satisfying Condition B is a Zariski 
open set in the variables $x_1,\dots, x_{\dim \goh}$. Indeed, as a $\gos$-module, 
$V_{\lambda(\mathbf x, \mathbf y)} (\gol)$ is isomorphic to 
$V_{ b(\mathbf z)} (\gos)$, and therefore the number of values of $\alpha$ for which 
$n( \pr(\lambda(\mathbf x, \mathbf z) ) - \alpha,\lambda(\mathbf x, \mathbf z))\neq 0$ 
is a function independent of $x_1,\dots, x_{\dim \goh}$. 
For each $\mu\neq \nu\in \bar{\goh}^*$ with 
$n( \mu,\lambda(\mathbf x, \mathbf z))\neq 0$, $n( \nu,\lambda(\mathbf x, \mathbf z))\neq 0$, 
the condition that $\mu$ and $\nu$ are not linked is a finite (as $ \mathbf W(\bar{\gog})$ 
is finite) set  of linear $\neq$-inequalities in the coordinates of $\mu$ and $\nu$, which in 
turn are  linear functions in the $x_i$'s. 
Therefore for a fixed value of $\mathbf z$, Condition B is determined by a finite 
set of linear $\neq$-inequalities in the variables $x_1,\dots, x_{\dim \goh}$ 
and is therefore Zariski open. We note that it is possible the Zariski open set 
on the variables $x_1,\dots, x_{\dim \goh}$ is the empty set:
this could happen if one of the linear $\neq$-inequalities is of the 
form $a\neq 0$ for a non-zero constant $a$.

For a fixed value of $\mathbf z$, we claim that the set of $\lambda$ 
satisfying the strong Condition B is also a Zariski 
open set in the variables $x_1,\dots, x_{\dim \goh}$ (which may, again, be the empty set). 
Indeed, let $\alpha_i$ be the simple roots of $\bar{\gog}$, and define $q_i(\mathbf x, \mathbf z)$ via 
$\pr (\lambda(\mathbf x, \mathbf z))=:\sum q_i(\mathbf x, \mathbf z)\alpha_i$. 
The elements  $q_i(\mathbf x, \mathbf z)$ are linear 
functions of the $x_{i}$'s. By the preceding discussion, a weight 
$\mu\in \bar{\goh}^*$ with $n(\mu,\pr(\lambda(\mathbf x, \mathbf z)))>0$ is of 
the form $\pr(\lambda(\mathbf x, \mathbf z))-\alpha_i$ where $\alpha_i\in \bar{\goh}^*$ 
runs over some finite set of weights with coordinates that do not depend on the $x_i$. 
Therefore for $n(\mu,\pr(\lambda(\mathbf x, \mathbf z)))>0$, the function $p_1(\mu)$
is a quadratic polynomial in the variables  $x_1,\dots, x_{\dim \goh}$ 
with quadratic term depending only on $\lambda(\mathbf x,\mathbf z)$. 
Therefore the function $p_1(\mu)-p_1(\nu)$ is linear function in the 
variables $x_1,\dots, x_{\dim \goh}$, which proves our claim. 

\section{The pair $\LieAlgPair{\LieGtwo}{so(7)}$ }\label{secG2inB3}
In this section we study the branching problem for the pair 
$\LieAlgPair{\LieGtwo}{so(7)}$. 

Our original motivation for studying the example of
$\LieAlgPair{\LieGtwo}{so(7)}$ comes from a natural conformal
geometry problem in dimension $5$ (notice that $so(7)$ is the complexification of the
conformal Lie algebra in dimension $5$). There is a remarkable
connection between the geometry of generic $2$-plane fields ${\fam2 D}$ on
a manifold $M$ of dimension $5$ and pseudo-Riemannian metrics of signature $(3,4)$
in dimension $7$ whose holonomy is the split real form of ${G_2}$. 
The identification proceeds in two stages. The first
step, due to Cartan, is the equivalence of such fields
${\fam2 D}$ to a conformal class of metrics of
signature $(2,3)$ on $M$. The second step
canonically associates to a conformal structure of signature $(2,3)$ on $M$ a
Ricci-flat metric $\tilde{g}$ of signature $(3,4)$ on an open subset of
$(\mR_+\times M\times\mR)$ containing $(\mR_+\times M\times \{0\})$. 
Furthermore, the metrics constructed in this way satisfy 
$Hol({\tilde{g}})\subset\LieGtwo$, where $Hol({\tilde{g}})$ is 
the holonomy of the metric ${\tilde g}$. 
We note that metrics with holonomy reduced 
to ${\LieGtwo}$ in dimension $7$ are not easy to construct.

On the other hand, a geometrical characterization of the reduction of
the structure group $SO(7)$ down to ${G_2}$ for a given inducing representation
$\mV_\lambda$ is given by invariant differential operators acting on sections of the
associated vector bundles, intertwined by actions of $so(7)$ and ${\LieGtwo}$.

Finally, by \eqref{eqInvriantDiffOpActing}, we can transform the problem 
of finding differential invariants for
$(so(7),\LieGtwo),\mV_\lambda$ into an algebraic question
on homomorphisms between generalized Verma modules, which correspond
to solutions of the branching problem. For the homomorphisms constructed in the 
present Section, we need to further construct lifts of homomorphisms to the category of
semiholonomic generalized Verma modules. 
We hope to address this geometrically interesting topic in a future work.

The example $\LieAlgPair{\LieGtwo}{so(7)}$ is of special
algebraic interest as well. 
In the branching problem examples in 
which the subalgebra is of
rank 1 (such as, for example, the principal $sl(2)$-subalgebra of $sl(3)$)
the quasipolynomials that govern the branching via Theorem
\ref{thTheBranchingRuleIsQP}
are in one variable. In our point of view these examples  are not 
sufficiently ``generic'' - here, we note that there are fundamental differences in the
vector partition function theory in one and two or more variables. These differences
stem from the fact that polynomials in one variable
are a principal ideal domain, which in turn implies ``non-generic'' properties 
such as uniqueness of the partial fraction decompositions 
used to compute the vector partition functions. 

The pairs of type $B_2\simeq C_2\hookrightarrow A_3\simeq D_3$,
$A_3\hookrightarrow B_3$, $A_3\hookrightarrow C_3$ are,
by Theorem \ref{thTheBranchingRuleIsQP},
of quasipolynomial degree $0=\frac{1}{2}(\dim \gog- \dim\bar\gog-\dim
\goh-\dim \bar\goh )$,
and we do not consider them sufficiently ``generic''.
Of the remaining low-dimensional examples
there is only one in which the subalgebra is simple,
namely that of $\LieAlgPair{\LieGtwo}{so(7)}$.

We note that the finite-dimensional branching laws for the 
pair $(so(7),\LieGtwo)$ were studied in \cite{McGovernG2InB3Branching}. However, 
\cite{McGovernG2InB3Branching} does not address the task of finding 
explicit formulas for the $\gob$-singular vectors realizing the branching, which is ultimately needed 
for the geometric applications described in the preceding paragraph. 
It is important to note that formulas for the finite dimensional 
branching can be extracted using the infinite-dimensional branching problem.
We make use of this idea in Corollary 
\ref{corFormulasEigenVectorsRemainValid} which states that 
the $\bar\gob$-singular vector formulas derived  in an infinite-dimensional setting in 
Theorem \ref{leBranchingExplicit(x_1,0,0),(1,a,b)}
remain valid for finite dimensional representations as well.

\subsection{Structure of the embedding $\LieAlgPair{\LieGtwo}{so(7)}$ and the corresponding parabolic subalgebras}\label{secso7G2LieAlgstructure}\label{secG2inB3def}
We describe first the structure of $so(7)$ as a $\LieGtwo$-module 
and the parabolic subalgebras $\gop$ of $so(7)$ relative to the parabolic 
subalgebras $i(\bar\gop)$ of $i(\LieGtwo)$.

\offlineVersionOnly{
The structure constants of $so(7) $ and $\LieGtwo$ relative to a 
Chevalley-Weyl basis are classically known, and available through a number 
of computer algebra systems, including \cite{Milev:vpf}. We use abbreviations  
$g_{\pm j}$, $j\in \{\pm 1, \dots, \pm 9\}$, $h_{1}, h_2, h_3$ for the
the Chevalley-Weyl generators of $so(7)$ and 
$g_{\pm j}$, $j\in \{\pm 1, \dots, \pm 6\}$, $\bar h_1, \bar h_2$ for the 
Chevalley-Weyl generators of $\LieGtwo$. In our labeling
$g_{\pm 1}, g_{\pm 2}, g_{\pm 3}$ and $\bar g_{\pm 1}, \bar g_{\pm 2}$ are the simple generators of 
$so(7)$ and $\LieGtwo$. We denote by $\alpha_1$ the short positive simple root of $\LieGtwo$ and by 
$\alpha_2$ the long one. We denote by $\eta_1=\varepsilon_1-\varepsilon_2, \eta_2:=\varepsilon_2-\varepsilon_3, \eta_3:=\varepsilon_3 $ the simple positive roots of $so(7)$. We denote by $\omega_1:=\varepsilon_1$, $\omega_2:=\varepsilon_1+\varepsilon_2$ and  $\omega_3:=\frac12(\varepsilon_1+\varepsilon_2+\varepsilon_3)$ 
the fundamental weights of  $so(7)$ and by $\psi_1:=2\alpha_1+\alpha_2$, $\psi_2:=3\alpha_1+2\alpha_2$ 
the fundamental weights of $\LieGtwo$.

For a more detailed explanation of our notation 
we direct the reader to the extended version of this 
text available online, \cite{MilevSomberg:BranchingExtended}.

} 

\onlineVersionOnly{
We start by fixing a Chevalley-Weyl basis of the Lie algebra $so(2n+1)$. 
Let the defining vector space $V$ of $so(2n+1)$ have a basis 
$e_{1},\dots e_{n}, e_0, e_{-1}, \dots e_{-n}$. Let the defining symmetric bilinear 
form $B$ of $so(2n+1)$ be given by the matrix 
$\mathbf B:=\left(\begin{array}{c|c|c}\mathbf 0_{n\times n} & \mathbf 0_{n\times 1} 
& \mathbf I_{n\times n} \\ \hline \mathbf 0_{1\times n} & 1 & \mathbf  0_{1\times n}\\ 
\hline \mathbf I_{n\times n}& \mathbf{0}_{n\times 1} & \mathbf 0_{n\times n} \end{array} 
\right)$, alternatively defined by $ B(e_i, e_j):=0, i\neq -j$, 
$ B(e_i,e_{-i})=1$, $B(e_i,e_0):=0$, $B(e_0,e_0):=1$, or alternatively defined as an 
element of $S^2(V^*)$,
\begin{equation}\label{eqDefiningSymmetricBilinearFormTypeB}
B:=\sum_{i=-n}^{n} e_i^*\otimes e_{-i}^*=(e_0^*)^2+ 2\sum_{i=1}^{n} e_i^* e_{-i}^*\quad ,
\end{equation}
under the identification $v^*w^*:=\frac {1}{2!} \left(v^*\otimes w^*+w^*\otimes v^*\right)$.

In the basis $e_{1},\dots e_{n}, e_0, e_{-1}, \dots e_{-n}$, the 
matrices of the elements of $so(2n+1)$ are of the form
\[
\left(\begin{array}{c|c|c}A&\begin{array}{c}v_1\\ \vdots \\ v_n\end{array} &C=-C^T 
\\\hline \begin{array}{ccc}w_1 &\dots&  w_n\end{array} &0& \begin{array}{ccc}-v_1 
&\dots&  -v_n\end{array} \\\hline D=-D^T&\begin{array}{c}-w_1\\ \vdots \\ -w_n\end{array} 
& -A^T\end{array}\right)\quad , \]
i.e., all matrices $\mathbf C$ such that $\mathbf C^t\mathbf B+\mathbf B\mathbf C=0$. 
We fix $e_{1}^*,\dots e_{n}^*, e_0^*, e_{-1}^*, \dots e_{-n}^*$ to be basis of $V^*$ 
dual to $e_{1},\dots e_{n}, e_0, e_{-1}, \dots e_{-n}$. We identify elements of $End(V)$ 
with elements of $V\otimes V^*$.
In turn, we identify elements of $End(V)$ with their matrices in the basis 
$e_{1},\dots, e_{n}$, $e_0$, $e_{-1}, \dots, e_{-n}$.

Fix the Cartan subalgebra $\goh$ of $so(2n+1)$ to be the subalgebra of
diagonal matrices, i.e., the subalgebra spanned by the vectors 
$e_i\otimes e_i^*-e_{-i}\otimes e_{-i}^* $. 
Then the basis vectors $e_{1},\dots e_{n}, e_0, e_{-1}, \dots e_{-n}$ are a basis for 
the $\goh$-weight vector decomposition of $V$. Let the $\goh$-weight of $e_i, i>0$, be 
$\varepsilon_i$. Then the $\goh$-weight of $e_{-i}, i>0$ is $-\varepsilon_i$, and an 
$\goh$-weight decomposition of $so(2n+1)$ is given by the elements 
$g_{\varepsilon_i-\varepsilon_j}:= e_i\otimes e_j^*-e_{-j}\otimes e_{-i}^*$, 
$g_{\pm(\varepsilon_i+\varepsilon_j)}:=e_{\pm i}\otimes e_{\mp j}^*-e_{\pm j}\otimes e_{\mp i}^*$ 
and $g_{\pm \varepsilon_i}:=\sqrt{2}\left(e_{\pm i}\otimes e_0^*-e_0\otimes e_{\mp i}^*\right)$, 
where $i,j>0$.

Define the symmetric bilinear form $\scalarLA{\bullet}{\bullet}$ on $\goh^*$ by 
$\scalarLA{\varepsilon_i}{\varepsilon_j}=1$ if $i=j$ and zero otherwise. 

The root system of $so(2n+1)$ with respect to $\goh$ is given by $\Delta(\gog):=\Delta^+(\gog)\cup\Delta^-(\gog)$, 
where we define 
\begin{equation}\label{eqPosRootSystemB3}
\Delta^+(\gog):=\{\varepsilon_i\pm\varepsilon_j|1\leq i< j\leq n\}\cup\{\varepsilon_i|1\leq i\leq n\}
\end{equation}
and $\Delta^-(\gog):=-\Delta^{+}(\gog)$. We fix the Borel subalgebra $\gob$ of $so(2n+1)$ to be the 
subalgebra spanned by $\goh$ and the elements $g_{\alpha}, \alpha\in \Delta^+(\gog)$. The simple positive 
roots corresponding to $\gob$ are then given by 
\[
\eta_1:=\varepsilon_1-\varepsilon_2,\dots,  \eta_{n-1}:=\varepsilon_{n-1}-\varepsilon_{n},\eta_{n}:=\varepsilon_{n}\quad .
\]
We recall that the $i^{th}$ fundamental weight $\omega_i\in \goh^*$ is defined by $\scalarLA{\omega_i}{\eta_j}:=\delta_{ij}\frac{\scalarLA{\eta_j}{\eta_j}}2$.
The fundamental  weights of $so(2n+1)$ are then given by 
\begin{eqnarray*}
\omega_1&:=&\varepsilon_1 \\
\omega_2&:=&\varepsilon_1+\varepsilon_2 \\ 
\vdots \\
\omega_{n-1}&:=&\varepsilon_1+\dots+\varepsilon_{n-1} \\
\omega_n&:=&1/2(\varepsilon_1+\dots+\varepsilon_n)\quad.
\end{eqnarray*}

For the remainder of this Section we fix the odd orthogonal Lie algebra to be $so(7)$. 

We abbreviate the  Chevalley-Weyl generator $g_{\alpha}\in so(7)$ as $g_i$ 
according to the Table below. 
The rows are sorted by the graded lexicographic order on the 
simple coordinates of the roots. We furthermore set 
$h_1:=[g_1,g_{-1}]$, $h_2:=[g_2,g_{-2}]$, $h_3:=1/2[g_{3}, g_{-3}]$.

\noindent \begin{longtable}{ccc} \caption{Generators of so(7)}\\generator & root simple coord. & root $\varepsilon$-notation \\\hline \endhead
$g_{-9}$&$(-1, -2, -2)$&$-\varepsilon_{1}-\varepsilon_{2}$\\ $g_{-8}$&$(-1, -1, -2)$&$-\varepsilon_{1}-\varepsilon_{3}$\\ $g_{-7}$&$(0, -1, -2)$&$-\varepsilon_{2}-\varepsilon_{3}$\\ $g_{-6}$&$(-1, -1, -1)$&$-\varepsilon_{1}$\\ $g_{-5}$&$(0, -1, -1)$&$-\varepsilon_{2}$\\ $g_{-4}$&$(-1, -1, 0)$&$-\varepsilon_{1}+\varepsilon_{3}$\\ $g_{-3}$&$(0, 0, -1)$&$-\varepsilon_{3}$\\ $g_{-2}$&$(0, -1, 0)$&$-\varepsilon_{2}+\varepsilon_{3}$\\ $g_{-1}$&$(-1, 0, 0)$&$-\varepsilon_{1}+\varepsilon_{2}$\\ $h_{1}$&$(0, 0, 0)$&$0$\\ $h_{2}$&$(0, 0, 0)$&$0$\\ $h_{3}$&$(0, 0, 0)$&$0$\\ $g_{1}$&$(1, 0, 0)$&$\varepsilon_{1}-\varepsilon_{2}$\\ $g_{2}$&$(0, 1, 0)$&$\varepsilon_{2}-\varepsilon_{3}$\\ $g_{3}$&$(0, 0, 1)$&$\varepsilon_{3}$\\ $g_{4}$&$(1, 1, 0)$&$\varepsilon_{1}-\varepsilon_{3}$\\ $g_{5}$&$(0, 1, 1)$&$\varepsilon_{2}$\\ $g_{6}$&$(1, 1, 1)$&$\varepsilon_{1}$\\ $g_{7}$&$(0, 1, 2)$&$\varepsilon_{2}+\varepsilon_{3}$\\ $g_{8}$&$(1, 1, 2)$&$\varepsilon_{1}+\varepsilon_{3}$\\ $g_{9}$&$(1, 2, 2)$&$\varepsilon_{1}+\varepsilon_{2}$\\ \end{longtable}

Let now $\bar\gog= \LieGtwo$. One way of defining the positive 
root system of $\LieGtwo$ is by setting it to be the set of vectors
\begin{equation}\label{eqPosRootSystemG2}
\Delta(\bar \gog):=\{\pm(1, 0), \pm(0, 1), \pm(1, 1), \pm(2, 1), \pm(3, 1), \pm(3, 2)\}\quad .
\end{equation}
We set $\alpha_1:=(1,0)$ and $\alpha_2:=(0,1)$. 
We fix a bilinear form $\scalarSA{\bullet}{\bullet}$ on $\bar\goh$, proportional
\footnote{For the bilinear form induced by the Killing form of $\LieGtwo$, 
the long root has squared length $36$, so the coefficient of proportionality is $6$} 
to the one induced by Killing form by setting 
$\left(\begin{array}{cc}\scalarSA{\alpha_1}{\alpha_1} & \scalarSA{\alpha_1}{\alpha_2} \\ 
\scalarSA{\alpha_2}{\alpha_1} & \scalarSA{\alpha_2}{\alpha_2}\\\end{array}\right):= 
\left(\begin{array}{cc}2 & -3\\ -3 & 6\\\end{array}\right)$. 
In an  $\scalarSA{\bullet}{\bullet}$-orthogonal basis the root system of $\LieGtwo$ 
is often drawn as \begin{tabular}{c}
\begin{pspicture}(-1,1)(1,-1)
\psline(0,0)(0.5,0.87)
\psline(0,0)(-0.5,0.87)
\psline(0,0)(0.5,-0.87)
\psline(0,0)(-0.5,-0.87)
\psline(0,0)(1,0)
\psline(0,0)(-1,0)
\psline(0,0)(0.57,0.32) 
\psline(0,0)(-0.57,0.32)
\psline(0,0)(0.57,-0.32)
\psline(0,0)(-0.57,-0.32)
\psline(0,0)(0,0.67)
\psline(0,0)(0,-0.67)
\put(1,0.1){\tiny{$\alpha_2$}}
\put(-0.62,0.38){\tiny{$\alpha_1$}}
\end{pspicture}\end{tabular}. 
The fundamental weights of $\LieGtwo$ relative to the 
simple basis $\{\alpha_1, \alpha_2\}$ are then given by 
$\psi_1:=2\alpha_1+\alpha_2$, $\psi_2:=3\alpha_1+2\alpha_2$.
We fix a basis for the Lie algebra $\LieGtwo$ by giving a 
set of Chevalley-Weyl generators  $\bar g_{i}$, $i\in \{\pm 1, \dots \pm 6\}$, 
and by setting $ \bar h_{1}:=[\bar g_1, \bar g_{-1}]$, $ \bar h_{2}:=3[\bar g_2, \bar g_{-2}]$ 
according to the following table.

\begin{longtable}{ccc} \caption{Generators of $\LieGtwo$\label{tableG2ChevalleyGeneratorsNames} }\\
generator & root simple coord.\\\hline\endhead 
$\bar{g}_{-6}$&$(-3, -2)$\\ $\bar{g}_{-5}$&$(-3, -1)$\\ $\bar{g}_{-4}$& $(-2, -1)$\\ $\bar{g}_{-3}$&$(-1, -1)$\\ $\bar{g}_{-2}$&$(0, -1)$\\ $\bar{g}_{-1}$&$(-1, 0)$\\ $\bar{h}_{1}$&$(0, 0)$\\ $\bar{h}_{2}$&$(0, 0)$\\ $\bar{g}_{1}$&$(1, 0)$\\ $\bar{g}_{2}$&$(0, 1)$\\ $\bar{g}_{3}$&$(1, 1)$\\ $\bar{g}_{4}$&$(2, 1)$\\ $\bar{g}_{5}$&$(3, 1)$\\ $\bar{g}_{6}$&$(3, 2)$\\ \end{longtable}
} 

All embeddings $\LieAlgPair{\LieGtwo}{so(7)}$ are conjugate over $\mathbb C$. 
Following  \cite{McGovernG2InB3Branching}, one such embedding is given via 
\begin{equation}\label{eqEmbedG2B3accordingToMcGovern}
i(\bar{g}_{\pm 2}):= g_{\pm 2}, \quad i(\bar g_{\pm 1}):= g_{\pm 1}+g_{\pm 3}\quad.
\end{equation}
\onlineVersionOnly{
As $\bar g_{\pm 1}, \bar g_{\pm 2}$ generate the Lie algebra $\LieGtwo$, 
the preceding data determines the map $i$. The structure constants of $\LieGtwo$ 
are readily available from a number of computer algebra systems, including ours, 
and one can directly check that the map $i$ is a Lie algebra homomorphism. 
Alternatively, we can use $i(\bar{g}_{\pm 1}), i(\bar{g}_{\pm 2})$ to generate 
a Lie subalgebra of $so(7)$, verify that this subalgebra is indeed 
14-dimensional  and simple, and finally use this 14-dimensional image 
to compute the structure constants of $\LieGtwo$.

Let $\pr: \goh^*\to \bar\goh^*$ be the map naturally induced by $i$. Then
\begin{equation}\label{eqProjectionCartanB_3toCartanG_2}
\pr(\underbrace{\varepsilon_1-\varepsilon_2}_{\eta_1})=\pr(\underbrace{\varepsilon_3}_{\eta_3})=\alpha_1 \quad \pr(\underbrace{\varepsilon_2-\varepsilon_3}_{\eta_2})=\alpha_2\quad ,
\end{equation}
or equivalently
\begin{equation*}
\pr(\omega_1)=\pr(\omega_3)=\psi_1 \quad \pr(\omega_2)=\psi_2\quad.
\end{equation*}
Conversely, $\iota: \bar\goh^* \to \goh^*$ (see Section \ref{secNotationPreliminaries}) is the map 
\begin{equation*}\label{eqG2rootSystemEmbeddingInB3}
\iota(\alpha_2)=3\eta_2= 3\varepsilon_2-3\varepsilon_3 \quad \iota(\alpha_1)=\eta_1+2\eta_3= \varepsilon_1-\varepsilon_2+2\varepsilon_3\quad .
\end{equation*}
The following Lemma gives the pairwise inclusions between the 
parabolic subalgebras of $so(7)$ and the embeddings of the parabolic 
subalgebras of $\LieGtwo$.
} 

\begin{lemma}\label{leParabolicsG2inParabolicsB3}
For the pair $G_2\stackrel{i}{\hookrightarrow} so(7)$, 
let $\goh, \gob,\gop,\bar \goh, \bar{\gob}, \bar{\gop}$ denote respectively Cartan, 
Borel and parabolic subalgebras with the assumptions that 
$i(\bar\goh)\subset\goh\subset \gob$, $i(\bar{\gob})\subset \gob\subset\gop$, 
$\bar{\gob}\subset\bar{\gop}$.
Then we have the following inclusion diagram for all possible values of $\gop, \bar\gop$.
\[
\xymatrix{
&{\BThreeDiagram{0}{0}{0}}\simeq so(7) & &  \\
{\BThreeDiagram{1}{0}{0}} \ar[ur] &{\BThreeDiagram{0}{1}{0}}  \ar[u] &{\BThreeDiagram{0}{0}{1}} \ar[ul] &
{\GTwoDiagram{0}{0}} \simeq\LieGtwo \ar[ull] \\
{\BThreeDiagram{1}{1}{0}} \ar[u]  \ar[ur] &{\BThreeDiagram{1}{0}{1}}  \ar[ul] \ar[ur]  &{\BThreeDiagram{0}{1}{1}} \ar[ul]  \ar[u] &
& {\GTwoDiagram{0}{1}}  \ar[ul] \ar[ulll]  \\
&{\BThreeDiagram{1}{1}{1}}\simeq \gob \ar[u]  \ar[ur]  \ar[ul]  &  &
{\GTwoDiagram{1}{0}} \ar[ull] \ar[uu]  \\
&&& {\GTwoDiagram{1}{1}}\simeq \bar\gob \ar[u] \ar[ull]  \ar[uur]\\
}
\]
If a path of arrows exists from one node of the diagram 
to the other, then the corresponding parabolic subalgebras lie inside one another. 
If in the diagram a direct arrow exists from a parabolic subalgebra $\bar \gop$ of 
$\LieGtwo$ to a parabolic subalgebra $\gop$ of $so(7)$, 
then $\bar \gop= i^{-1}(i(\bar{\gog})\cap\gop)$.
\end{lemma}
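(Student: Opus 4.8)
The plan is to split the diagram into three kinds of arrows — those internal to the $so(7)$-block, those internal to the $G_2$-block, and the ``cross'' arrows joining a $G_2$-parabolic to an $so(7)$-parabolic — to establish each direct arrow, and then to read off every path-inclusion by transitivity of $\subseteq$.

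The two families of internal arrows are routine. A standard parabolic $\gop\supseteq\gob$ is determined by its crossed simple roots (those simple roots that are weights of $\gon$), so under inclusion the parabolics of $so(7)$ (resp.\ of $G_2$) form a Boolean lattice, with $\gop_{(a,b,c)}\subseteq\gop_{(a',b',c')}$ precisely when $(a',b',c')\le(a,b,c)$ componentwise, and likewise for the $\bar\gop_{(p,q)}$. The arrows drawn inside each block are exactly the covering relations of these lattices, and composing them yields all internal inclusions.

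The substance of the lemma is the cross relation $\bar\gop=i^{-1}(i(\bar\gog)\cap\gop)$. First I would check that this preimage is automatically a standard parabolic: $i(\bar\gog)\cap\gop$ is an intersection of subalgebras, it is $\ad(i(\bar\goh))$-stable because both factors are, and it contains $i(\bar\gob)$ since $i(\bar\gob)\subseteq\gob\subseteq\gop$; applying the injective $i^{-1}$ therefore produces an $\bar\goh$-graded subalgebra of $\bar\gog$ containing $\bar\gob$, i.e.\ a standard parabolic. Such a parabolic is pinned down by the simple negative root spaces it contains, and here $\bar\gog_{-\alpha_j}\subseteq\bar\gop$ iff $i(\bar\gog_{-\alpha_j})\subseteq\gop$. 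I would then run this finite test using \eqref{eqEmbedG2B3accordingToMcGovern}: as $i(\bar g_{-2})=g_{-2}$ lies in the single root space of weight $-\eta_2$, one gets $\bar\gog_{-\alpha_2}\subseteq\bar\gop$ iff $\eta_2$ is uncrossed; and as $i(\bar g_{-1})=g_{-1}+g_{-3}$ has nonzero components in the two distinct root spaces of weights $-\eta_1$ and $-\eta_3$, membership of this vector in the $\goh$-graded space $\gop$ forces both summands into $\gop$, so $\bar\gog_{-\alpha_1}\subseteq\bar\gop$ iff both $\eta_1$ and $\eta_3$ are uncrossed. This gives the rule $\gop_{(a,b,c)}\mapsto\bar\gop_{(a\vee c,\,b)}$, which on the four pairs carrying a direct cross arrow returns $(0,0,0)\mapsto(0,0)$, $(0,1,0)\mapsto(0,1)$, $(1,0,1)\mapsto(1,0)$ and $(1,1,1)\mapsto(1,1)$, exactly as drawn; the accompanying inclusion $i(\bar\gop)\subseteq\gop$ is then free, since $i(\bar\gop)=i(\bar\gog)\cap\gop$ by construction.

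I do not anticipate a real obstacle: once the two structural remarks are in place — that the preimage is a standard parabolic, and that $G_2$-root-space containment reduces to the finitely many $so(7)$-root-space components of the image — everything is a short explicit computation. The only point that rewards care is the behaviour of $\alpha_1$: because $i(\bar g_{-1})$ is a genuine two-term sum lying in distinct $\goh$-root spaces, both components must enter $\gop$ simultaneously, and it is this that produces the $a\vee c$ in the rule, hence the non-injectivity of $\gop\mapsto\bar\gop$ that the diagram records by drawing only one representative cross arrow per $G_2$-parabolic.
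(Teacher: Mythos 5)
Your proposal is correct and takes essentially the same route as the paper: the paper's proof consists precisely of the observation that a parabolic subalgebra is a direct sum of root spaces, so that $i(\bar g_{\pm 1})=g_{\pm 1}+g_{\pm 3}$ lies in $\gop$ if and only if both $g_{\pm 1}$ and $g_{\pm 3}$ do, while $i(\bar g_{\pm 2})=g_{\pm 2}$ lies in $\gop$ if and only if $g_{\pm 2}$ does. Your extra steps (checking that $i^{-1}(i(\bar\gog)\cap\gop)$ is a standard parabolic, the Boolean-lattice bookkeeping for the internal arrows, and the explicit rule $\gop_{(a,b,c)}\mapsto\bar\gop_{(a\vee c,\,b)}$) just spell out details the paper leaves implicit.
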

\begin{proof}
Since a parabolic subalgebra of a reductive Lie algebra is a direct sum of root spaces, 
\eqref{eqEmbedG2B3accordingToMcGovern} implies that $i(\bar g_{\pm 1})$ belongs to $\gop$ 
if and only if both $ g_{\pm 1}$ and $ g_{\pm 3}$  
belong to $\gop$. Similarly we have that $i(\bar g_{\pm 2})$ belongs to $\gop$ if and only 
if $g_{\pm 2}$ belongs to $\gop$. This proves the Lemma.
\end{proof}

The following Lemma describes the structure of $so(7)$ as a 
module over the Levi part of 
a parabolic subalgebra of $\LieGtwo$. The proof of the Lemma is a straightforward computation.
Recall from Section \ref{secNotationPreliminaries} that $\goh$ stands for 
Cartan subalgebra, $\gop$ stands for parabolic subalgebra, $\gob$ stands for 
Borel subalgebra, 
$\gop$ stands for parabolic subalgebra, $\gol $ stands for the 
reductive Levi part of $\gop$, $\gos:=[\gol,\gol]$
and $\gon_-$ stands for the nilradical opposite to the nilradical of $\gop$. 

\begin{lemma}\label{leStructureB3overG2}
Let $\bar \goh, \bar \gob, \bar \gos, \bar{\gol}, \bar{\gop}, \bar \gon_-\subset \LieGtwo$ 
depend on the subalgebras $\goh, \gon_-, \gob, \gos, \gol,  \gop\subset so(7)$ as in Section \ref{secNotationPreliminaries}. 
We recall the key requirements that $\bar \gop= i^{-1}(i(\bar{\gog})\cap\gop)$ and $i(\bar{\gob})\subset\gob$.
\begin{enumerate}
\item
Suppose $\bar \gop\simeq\bar{\gop}_{(0,0)}\simeq \LieGtwo$. Then $so(7)$ decomposes over $\LieGtwo$ as the direct sum of $\LieGtwo$-modules
$V_{\psi_2}(\LieGtwo)\oplus V_{\psi_1}(\LieGtwo)$, where $V_{\psi_1}(\LieGtwo)$ is the 7-dimensional simple $\LieGtwo$-module and $V_{\psi_2}(\LieGtwo)\simeq \LieGtwo$ is the adjoint 14-dimensional  $\LieGtwo$-module. 
A weight vector basis of $V_{\psi_1}(\LieGtwo)$ is given by the following table.
\begin{longtable}{rp{1.8cm}l}Monomial form & embedding in so(7) &$\bar{\goh}$-weight\\
$\ad i(\bar g_{-1}) \ad i(\bar g_{-2})\ad i(\bar g_{-1})^{2}\ad i(\bar g_{-2})\ad i(\bar g_{-1}) \cdot v$ &$2g_{-6}-4g_{-7} $ &$-\psi_1$\\
$\ad i(\bar g_{-2})\ad i(\bar g_{-1})^{2}\ad i(\bar g_{-2})\ad i(\bar g_{-1})\cdot v$& $ 4g_{-4}+2g_{-5}$  & $\psi_1-\psi_2 $\\
$\ad i(\bar g_{-1})^{2}\ad i(\bar g_{-2})\ad i(\bar g_{-1}) \cdot v$& $4g_{-1}-2g_{-3} $& $-2\psi_1+\psi_2$ \\
$\ad i(\bar g_{-1})\ad i(\bar g_{-2})\ad i(\bar g_{-1})\cdot v$&$ -2h_{3}+2h_{1}$ &$0$\\
$\ad i(\bar g_{-2})\ad i(\bar g_{-1})\cdot v$&$g_{3}-2g_{1} $ & $2\psi_1-\psi_2$ \\
$\ad i(\bar g_{-1} ) \cdot v$& $g_{5}+2g_{4} $&$-\psi_1+\psi_2$\\
$v$ &  $2g_{7}-g_{6}$&$\psi_1$\\
\end{longtable}

\item
Suppose $\bar\gop\simeq \bar\gop_{(0,1)}$. Then $\bar \gol\simeq sl(2)+\bar\goh$ 
and $\bar\gos=[\bar\gol,\bar\gol]\simeq sl(2)$. The $\bar\gob\cap \bar\gos$-positive 
root of $\bar\gos$ can be identified with $\alpha_1$ and $so(7)$ 
decomposes over $\bar \gos$ as 
$2V_{\frac 3 2\alpha_1}(\bar\gos)\oplus 2V_{\alpha_1}(\bar\gos)\oplus 
2V_{\frac{\alpha_1}{2}}(\bar\gos)\oplus 3V_{0}(\bar\gos)$ 
(the summands are of respective dimensions $2\times 4+2\times 3+2\times 2+3\times 1$). 

By Lemma \ref{leParabolicsG2inParabolicsB3} $\gop\simeq \gop_{(0,1,0)}$. 
Therefore $\dim\gon_-=7$, $i(\bar\gon_-)\subset\gon_-$, and $\gon_-$ equals the 
maximal $\bar \gol$-stable subspace of the nilradical of the opposite 
Borel subalgebra of $so(7)$. In addition, $\gon_-$ is isomorphic as an 
$\bar \gos$-module to 
$\underbrace{V_{\frac 3 2 \alpha_1}(\bar\gos)}_{\subset i(\bar \gon)}\oplus 
V_{\frac{\alpha_1}{2}}(\bar\gos)\oplus \underbrace{V_0(\bar\gos)}_{\subset i(\bar \gon)}$ 
and is spanned respectively by the $\bar\goh$-weight vectors 
$\underbrace{\{\underbrace{6g_{-8}}_{\mathrm{lowest}},-2g_{-6}-2g_{-7}, -g_{-4}+g_{-5},  g_{-2}\}}_{\subset i(\bar\gon_-)}$,  $\{\underbrace{g_{-6}-2g_{-7}}_{\mathrm{lowest}}, 2g_{-4}+g_{-5}\}$ 
and $\underbrace{\{g_{-9} \}}_{\subset i(\bar\gon_-)} $. 
The coefficients of the preceding elements are chosen 
so that $\ad (i(\bar {g}_{-1}))$  sends the highest weight vectors to the lower ones.

The set $\weights_{\bar\goh} (\gon_-/N)$ equals $\{ -\psi_1, \psi_1-\psi_2\}$.
\item
Suppose $\bar\gop\simeq \bar{\gop}_{(1,0)}$. Then $\bar \gol\simeq sl(2)+\bar\goh$ and 
$\bar\gos=[\bar \gol,\bar \gol]\simeq sl(2)$. 
The $\bar\gob\cap\bar\gos$-positive root of $\bar \gos$ can 
be identified with $\alpha_2$ and $so(7)$ decomposes over $\bar\gos$ as 
$V_{\frac 3 2\alpha_2}(\bar\gos) \oplus 
6V_{\frac{\alpha_2}{2}}(\bar \gos)\oplus 6V_{0}(\bar \gos)$ 
(the summands are of respective dimensions $1\times 3+6\times 2+6\times 1$). 
The nilradical of the opposite Borel subalgebra of $so(7)$ has a maximal 
$\bar \gol$-stable subspace isomorphic as a $\bar \gos$-module 
to $ 3V_{\frac{\alpha_2} {2}}(\bar\gos)\oplus 2V_0(\bar\gos)$, 
spanned respectively by the $\bar\goh$-weight vectors 
$\underbrace{\{ \underbrace{-g_{-9}}_{\mathrm{lowest}}, g_{-8}  \}}_{\subset i(\bar\gon_-)}$, $\underbrace{\{\underbrace{g_{-4}-g_{-5}}_{\mathrm{lowest}}, g_{-1}+g_{-3}\}}_{\subset i(\bar\gon_-)}$, $\{\underbrace{2g_{-4}+g_{-5}}_{\mathrm{lowest}},2g_{-1}-g_{-3}\} $, 
$\underbrace{\{g_{-6}+g_{-7}\}}_{\subset i(\bar\gon_-)}$, $\{g_{-6}+2g_{-7}\}$. 
The coefficients of the preceding 
elements are chosen so that $\ad (i(\bar {g}_{-2}))$  sends the 
highest weight vectors to the lower ones.

By Lemma \ref{leParabolicsG2inParabolicsB3} $\gop\simeq$ 
$\gop_{(1,0,1)}$, $\gop_{(0,0,1)}$ or $\gop_{(1,0,0)}$.
\begin{enumerate}
\item
Let $\gop\simeq \gop_{(1,0,1)}$. Then $\dim\gon_-=8 $ and $i(\bar\gon_-)\subset \gon_-$. 
The set $\weights_{\goh} (\gon_-/N)$ equals $\{ -\psi_1, \psi_1-\psi_2, -2\psi_1+\psi_2\}$.
\item
Let $\gop\simeq \gop_{(0,0,1)}$. Then $\dim\gon_-=6 $ and $\dim (i(\bar\gon_-)\cap\gon_-)=3$. 
In addition, $\gon_-$ is isomorphic as a $\bar\gos=[\bar\gol,\bar{\gol}]$-module 
to $ 2V_{\frac{\alpha_2 }2}(\bar\gos)\oplus 2V_0(\bar\gos)$ and is 
spanned by the $\bar\goh$-weight vectors $\underbrace{\{-g_{-9}, g_{-8}  \}}_{\subset i(\bar\gon_-)}$, 
$\{\underbrace{-g_{-5}}_{\mathrm{lowest}}, g_{-3}\}$,  
$\underbrace{\{g_{-6}+g_{-7}\}}_{\subset i(\bar\gon_-)}$,  
$\{ g_{-6}-2g_{-7}\}$. The coefficients of the preceding elements 
are chosen so that $\ad (i(\bar {g}_{-2}))$  sends the highest weight vectors to the lower ones.

The set $\weights_{\goh} (\gon_-/N)$ equals $\{ -\psi_1\}$.
\item
Let $\gop\simeq \gop_{(1,0,0)}$. Then $\dim\gon_-=5 $ and $\dim (i(\bar\gon_-)\cap\gon_-)=2$. 
In addition, $\gon_-$ is isomorphic as a $\bar\gos$-module to 
$ 2V_{\frac{\alpha_2} 2}(\bar \gos) \oplus V_0 (\bar \gos)$, 
spanned by the $\bar\goh$-weight vectors 
$\underbrace{\{-g_{-9}, g_{-8}  \}}_{\subset i(\bar\gon_-)}$, 
$\{\underbrace{g_{-4}}_{\mathrm{lowest}}, g_{-1}\}$, $\{g_{-6}\}$. 
The coefficients of the preceding elements are chosen so that 
$\ad (i(\bar {g}_{-2}))$  sends the highest weight vectors to the lower ones.
\end{enumerate}
\end{enumerate}
\end{lemma}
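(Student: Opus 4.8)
The plan is to reduce every assertion to an explicit finite computation using the structure constants recorded in the generator tables together with the embedding \eqref{eqEmbedG2B3accordingToMcGovern}. First I would compute the $\bar\goh$-weight multiplicities of the $21$-dimensional adjoint $\LieGtwo$-module $so(7)$ directly from the projection formulas \eqref{eqProjectionCartanB_3toCartanG_2}. Applying $\pr$ to the $so(7)$ root table shows that each short root of $\LieGtwo$ (and its negative) receives exactly two roots of $so(7)$, each long root receives exactly one, and no root of $so(7)$ projects to $0$; hence the zero $\bar\goh$-weight space has dimension $3$, coming entirely from the Cartan $\goh$. This determines $\CharS so(7)$ completely.

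For part 1, I would match this character against $\CharS V_{\psi_2}(\LieGtwo)+\CharS V_{\psi_1}(\LieGtwo)$. Since $i(\LieGtwo)$ is reductive in $so(7)$, the module is completely reducible, and since the only irreducible $\LieGtwo$-modules of dimension at most $7$ are the trivial one and the $7$-dimensional $V_{\psi_1}(\LieGtwo)$, the character match forces $so(7)\simeq V_{\psi_2}(\LieGtwo)\oplus V_{\psi_1}(\LieGtwo)$, with $V_{\psi_2}\simeq i(\LieGtwo)$ the adjoint copy; a trivial summand is excluded because the zero $\bar\goh$-weight multiplicity of the $7$-dimensional complement is $3-2=1$, matching $V_{\psi_1}$. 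To produce the explicit basis table I would verify that $v:=2g_7-g_6$ is $\LieGtwo$-highest, i.e.\ annihilated by $\ad i(\bar g_1)$ and $\ad i(\bar g_2)$, of $\bar\goh$-weight $\psi_1$, and then apply the listed strings of lowering operators $\ad i(\bar g_{-1})$, $\ad i(\bar g_{-2})$, checking at each node that the output equals the tabulated element of $so(7)$. This is a direct evaluation of brackets through \eqref{eqEmbedG2B3accordingToMcGovern}.

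For parts 2 and 3 the relevant Levi semisimple part is $\bar\gos\simeq sl(2)$, generated by $i(\bar g_{\pm 1})$ (positive root $\alpha_1$) in part 2 and by $i(\bar g_{\pm 2})$ (positive root $\alpha_2$) in part 3. I would restrict the adjoint action to this $sl(2)$ and decompose by the standard string argument: the eigenvalues of the $\bar\gos$-Cartan element on the $\bar\goh$-weight spaces already computed determine the $sl(2)$-isotypic components uniquely, giving the stated multiplicities. The parabolic $\gop\subset so(7)$ attached to each $\bar\gop$ is read off from Lemma \ref{leParabolicsG2inParabolicsB3}, so $\gon_-$ is the span of the negative root spaces of $so(7)$ lying outside $\gol$. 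I would group these into $\bar\gos$-strings under $\ad i(\bar g_{-1})$ or $\ad i(\bar g_{-2})$, solving the small linear systems that yield the explicit $\bar\goh$-weight vectors (such as $6g_{-8}$, $-2g_{-6}-2g_{-7}$, and so on), normalized so that the lowering operator sends highest to lowest. Finally I would identify $i(\bar\gon_-)\cap\gon_-$ as those strings whose top vector is the image of a $\bar\gon_-$ highest weight vector, and read off $\weights_{\bar\goh}(\gon_-/N)$ by deleting the weights of $N\simeq\bar\gon_-$.

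The conceptual content is light: once $\CharS so(7)$ is known, every decomposition is forced by character theory and $sl(2)$-representation theory. The real labor, and the only place where errors are likely, lies in the explicit linear algebra producing the singular vectors, i.e.\ in determining the coefficients $a_\beta$ of Lemma \ref{leLargeNilradicalHasLsubmoduleIsoToSmallerNilradical} that make a prescribed $\bar\goh$-weight combination $\bar\gos$-singular, and in fixing the normalizations consistently across all the sub-cases so that the tabulated embeddings into $so(7)$ come out exactly as stated. This is where I would lean on the structure-constant computations of \cite{Milev:vpf}.
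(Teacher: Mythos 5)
Your proposal is correct and is essentially the paper's own proof: the paper disposes of this Lemma with the single remark that it is ``a straightforward computation'' (with all explicit computations deferred to the computer algebra system \cite{Milev:vpf}), and your plan --- complete reducibility plus the projected weight count forcing part 1, then $sl(2)$-string decompositions and explicit Chevalley--Weyl bracket computations through \eqref{eqEmbedG2B3accordingToMcGovern} for parts 2 and 3 --- is precisely that computation, spelled out and correctly organized. Nothing further is needed.
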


\begin{corollary}\label{corParabolicsB3weaklyCompatibleWithG2}
All 8 parabolic subalgebras $\gop\supset\gob\supset\goh$ of $so(7)$ are 
weakly compatible with $i(\LieGtwo)$. Furthermore, 
$\gop_{(1,0,1)}, \gop_{(0,1,0)}, \gop_{(1,1,1)}$ and 
$\gop_{(0,0,0)}$ are compatible with $i(\LieGtwo)$ in the sense of 
\cite[Section 3]{Kobayashi:branching} and the remaining 4 parabolic subalgebras of $so(7)$ are not.
\end{corollary}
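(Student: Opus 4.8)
The plan is to reduce both assertions to the explicit projection data of \eqref{eqProjectionCartanB_3toCartanG_2}, namely $\pr(\eta_1)=\pr(\eta_3)=\alpha_1$ and $\pr(\eta_2)=\alpha_2$, combined with the identity $\beta(i(\bar h))=\pr(\beta)(\bar h)$ valid for every $\bar h\in\bar\goh$ and every root $\beta$ of $so(7)$ (this is the defining property of $\pr$ as the dual of $i|_{\bar\goh}$, already used in the proof of Proposition \ref{corCompatibleIsWeaklyCompatible}). Setting $a:=\alpha_1(\bar h)$ and $b:=\alpha_2(\bar h)$, every $\bar h\in\bar\goh$ is determined by the pair $(a,b)$, and for a positive root $\beta=\sum_j c_j\eta_j$ of $so(7)$ one gets $\pr(\beta)(\bar h)=(c_1+c_3)a+c_2 b$.

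For the weak-compatibility claim I would first note that, since the $c_j$ are non-negative and not all zero, $\pr(\beta)$ is a non-zero non-negative combination of $\alpha_1,\alpha_2$ for every $\beta\in\weights_\goh\gon$, and likewise every positive root of $\LieGtwo$ is a non-negative non-zero combination of $\alpha_1,\alpha_2$. Hence any $\bar h$ with $a,b$ positive rationals satisfies $\alpha(\bar h)>0$ for all positive roots $\alpha$ of $\LieGtwo$, so condition~1 of Definition \ref{defWeaklyCompatible} holds for every possible $\bar\gon$, and $\beta(i(\bar h))=\pr(\beta)(\bar h)>0\ge 0$ for all $\beta\in\weights_\goh\gon$, so condition~2 holds for every possible $\gon$; the rationality requirement is automatic. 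Since a single $\bar h$ works uniformly, all $8$ parabolic subalgebras are weakly compatible with $i(\LieGtwo)$.

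For the compatibility dichotomy I would use that $\gop$ is compatible in the sense of \cite[Section 3]{Kobayashi:branching} exactly when there is $\bar h\in\bar\goh$ whose image $i(\bar h)$ is a hyperbolic grading element defining $\gop$, i.e.\ $\pr(\beta)(\bar h)=0$ for every root $\beta$ of $\gol$ and $\pr(\beta)(\bar h)>0$ for every $\beta\in\weights_\goh\gon$. The crucial obstruction is that $\pr(\eta_1)=\pr(\eta_3)=\alpha_1$ forces $\eta_1(i(\bar h))=\eta_3(i(\bar h))$ for every $\bar h$. Writing $\gop=\gop_{(c_1,c_2,c_3)}$, if $c_1\ne c_3$ then exactly one of $\eta_1,\eta_3$ is a root of $\gol$, where the grading value must vanish, while the other lies in $\gon$, where it must be strictly positive, which is impossible; this rules out the four parabolics $\gop_{(1,0,0)},\gop_{(0,0,1)},\gop_{(1,1,0)},\gop_{(0,1,1)}$. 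Conversely, for the four cases $c_1=c_3$, namely $\gop_{(0,0,0)},\gop_{(0,1,0)},\gop_{(1,0,1)},\gop_{(1,1,1)}$, I would exhibit an explicit grading element, taking $(a,b)$ to be $(0,0)$, $(0,b)$ with $b>0$, $(a,0)$ with $a>0$, and $(a,b)$ with $a,b>0$ respectively. Using $\pr(\beta)(\bar h)=(c_1+c_3)a+c_2 b$ one checks that in each case the positive roots on which the value vanishes are precisely $c_2=0$ roots, $c_1=c_3=0$ roots, or no root at all, matching exactly the positive root system of the Levi $\gol$ determined by Lemma \ref{leParabolicsG2inParabolicsB3}.

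The only genuinely delicate point is verifying that, for each of the four explicit grading elements, no positive root outside the intended Levi accidentally projects to a weight vanishing on $\bar h$; this is a finite check against the list of the nine positive roots of $so(7)$ and their projections recorded above, and it is precisely where the coincidence $\pr(\eta_1)=\pr(\eta_3)=\alpha_1$ must be tracked. Everything else is bookkeeping, matching each $\gop_{(c_1,c_2,c_3)}$ with its $\bar\gop$ and its Levi via Lemma \ref{leParabolicsG2inParabolicsB3}.
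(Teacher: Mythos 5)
Your proof is correct, and it is essentially the argument the paper intends: the paper states this Corollary without a written proof, as a direct consequence of the projection data $\pr(\eta_1)=\pr(\eta_3)=\alpha_1$, $\pr(\eta_2)=\alpha_2$ and of Lemma \ref{leParabolicsG2inParabolicsB3}. Both your weak-compatibility argument (a single $\bar h$ with $\alpha_1(\bar h)>0$, $\alpha_2(\bar h)>0$ works uniformly for all eight parabolic subalgebras, because each $\gop$ contains $\gob$ so $\weights_{\goh}\gon$ consists of positive roots) and your obstruction $\eta_1(i(\bar h))=\eta_3(i(\bar h))$, which forces $\eta_1$ and $\eta_3$ to be crossed out simultaneously in any compatible $\gop$, are exactly the intended verifications, and your case-by-case exhibition of grading elements for $\gop_{(0,0,0)}$, $\gop_{(0,1,0)}$, $\gop_{(1,0,1)}$, $\gop_{(1,1,1)}$ checks out against the list of the nine positive roots.
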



\subsection{Constructing $\bar\gob$-singular vectors in $M_\lambda(so(7),\gop)$ for $\LieAlgPair{\LieGtwo}{so(7)}$ }\label{secConstructingSingularG2inB3}
The remarks after Definition \ref{defFiniteBranchingProblem} and 
Lemma \ref{leStructureB3overG2} imply the following Corollary.
\begin{corollary}
The parabolic subalgebra $\gop\subset so(7)$ has a 
finite branching problem over $i(\LieGtwo)$ if and only if
$\gop\simeq \gop_{(1,0,0)}$ or $\gop\simeq  so(7)=\gop_{(0,0,0)}$.
\end{corollary}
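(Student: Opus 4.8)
The plan is to reduce the statement to a comparison of nilradical dimensions. By the remark following Definition \ref{defFiniteBranchingProblem}, $\gop$ has a finite branching problem over $i(\LieGtwo)$ if and only if $\dim\gon=\dim\bar\gon$, where $\bar\gon$ is the nilradical of $\bar\gop=i^{-1}(i(\LieGtwo)\cap\gop)$ (and $\dim\gon=\dim\gon_-$). This criterion is legitimate for every $\gop\subset so(7)$ here, since all eight parabolic subalgebras are weakly compatible with $i(\LieGtwo)$ by Corollary \ref{corParabolicsB3weaklyCompatibleWithG2}, so Condition A holds and the multiplicities are governed by Theorem \ref{thBranchingMultsViaSymmetricTensors}. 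Thus the whole proof becomes a finite check over the eight parabolics $\gop_{(a_1,a_2,a_3)}$, which I would organize according to whether the simple root $\eta_2$ is crossed.

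First I would treat the four parabolics with $a_2=0$. Reading off \eqref{eqEmbedG2B3accordingToMcGovern} exactly as in the proof of Lemma \ref{leParabolicsG2inParabolicsB3}, $\eta_2$ uncrossed forces $\alpha_2$ uncrossed in $\bar\gop$, so $\bar\gop$ is $\LieGtwo$ (for $\gop_{(0,0,0)}$) or $\bar\gop_{(1,0)}$ (for $\gop_{(1,0,0)}$, $\gop_{(0,0,1)}$, $\gop_{(1,0,1)}$), with $\dim\bar\gon=0$ and $5$ respectively. These four cases are precisely parts 1 and 3(a)--(c) of Lemma \ref{leStructureB3overG2}, which give $\dim\gon=0,5,6,8$ for $\gop_{(0,0,0)},\gop_{(1,0,0)},\gop_{(0,0,1)},\gop_{(1,0,1)}$. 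Comparing term by term, the equality $\dim\gon=\dim\bar\gon$ holds exactly for $\gop_{(0,0,0)}$ and $\gop_{(1,0,0)}$.

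Next I would dispatch the four parabolics with $a_2=1$ uniformly. For any such $\gop$ the nilradical $\gon$ contains every positive root of $so(7)$ whose $\eta_2$-coefficient is positive; in particular it contains the nilradical of $\gop_{(0,1,0)}$, which has dimension $7$ by Lemma \ref{leStructureB3overG2}.2, so $\dim\gon\geq 7$. On the other hand $\bar\gon$ is a nilradical inside $\LieGtwo$, whose root system has only six positive roots, so $\dim\bar\gon\leq 6<7\leq\dim\gon$. Hence none of $\gop_{(0,1,0)},\gop_{(1,1,0)},\gop_{(0,1,1)},\gop_{(1,1,1)}$ has finite branching. Combining the two cases leaves exactly $\gop_{(0,0,0)}\simeq so(7)$ and $\gop_{(1,0,0)}$, which is the claim.

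I do not expect a genuine obstacle: once the criterion $\dim\gon=\dim\bar\gon$ is in hand, the argument is finite bookkeeping. The only mild subtlety is that Lemma \ref{leStructureB3overG2} explicitly tabulates only the five parabolics whose associated $\bar\gop$ lies in $\{\LieGtwo,\bar\gop_{(1,0)},\bar\gop_{(0,1)}\}$; the three parabolics corresponding to $\bar\gob$ are not listed there. My plan absorbs them painlessly into the $a_2=1$ case, where the crude bound $\dim\bar\gon\leq 6$ already settles them and no further case-by-case computation of $\bar\gon$ is needed.
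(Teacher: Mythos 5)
Your proposal is correct and takes essentially the same route as the paper: the paper's one-line proof combines the remark after Definition \ref{defFiniteBranchingProblem} (finite branching is equivalent to $\dim \gon = \dim \bar\gon$, legitimate here since Condition A holds for all parabolics) with the dimension counts in Lemma \ref{leStructureB3overG2}, which are exactly your two ingredients. Your uniform bound $\dim\bar\gon \leq 6 < 7 \leq \dim\gon$ for the parabolics with $\eta_2$ crossed is a clean way to cover the three parabolics whose nilradicals Lemma \ref{leStructureB3overG2} does not explicitly tabulate, a detail the paper's terse proof leaves to the reader.
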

In the present Section we apply Section \ref{secConstructingSingularVectors} 
to the pair $\LieAlgPair{\LieGtwo}{so(7)}$. Let $\bar{c}_1$ be the quadratic Casimir element of 
$U(\LieGtwo)$.  A formula for a degree 6 homogeneous element, 
linearly (and algebraically) independent on $c_1^3$ can be found in \cite{BincerRiesselmann:CasimirG2}.

In order to simplify computations, we only use 
the quadratic Casimir operator and the strong Condition B (Definition \ref{defSufficientlyGenericWeight}). 
As it turns out in all of our examples, for a fixed value of $\mathbf z$, 
the set of weights $\lambda(\mathbf x, \mathbf z)$ satisfying the strong Condition B is 
non-empty and Zariski open, and therefore includes almost all values of $x_1,x_2,x_3$.

\subsection{$\gop_{(1,0,0)}\subset so(7)$} 

We devote special attention to the finite branching problem, as here,
by Proposition \ref{propFiniteBranchingVermaEqualsTopPart}
the ``top level'' 
$Q_\lambda$ equals $M_\lambda(so(7),\gop_{(1,0,0)})$. Furthermore, whenever 
the requirements of Corollary \ref{corGenVermaDecompo} are satisfied, the branching 
problem of $M_\lambda(so(7),\gop_{(1,0,0)})$ 
is reduced to problems in single central character blocks of Category $\mathcal O^{\bar{\gop}_{(1,0)}}$ 
(Corollary \ref{corGenVermaDecomposB3G2} below).
In Theorem \ref{leBranchingExplicit(x_1,0,0),(1,a,b)}, for $\gop\simeq\gop_{(1,0,0)}$  and highest weight of the form 
$x_1\omega_1$, $x_1\omega_1+\omega_2$,  $x_1\omega_1+\omega_3$,  $x_1\omega_1+2\omega_2$,  
$x_1\omega_1+\omega_1+\omega_2$,  $x_1\omega_1+2\omega_3$, we produce a 
spanning set of the $\bar \gob$-singular vectors with $m(\mu,\lambda)\neq 0$, 
except for finitely many values of the variable $x_1$. 
Among those 5 families of weights, except for $\lambda=x_1\omega_1+\omega_1+\omega_2$ and the 
finitely many values of $x_1$ excluded by Corollary \ref{corGenVermaDecomposB3G2}, 
we prove that the generalized Verma $so(7) $-module $M_\lambda(so(7),\gop_{(1,0,0)})$ is 
a direct sum of generalized Verma $\LieGtwo$-modules $M_\mu(\LieGtwo, \gop_{(1,0)})$.

\begin{theorem}\label{leBranchingExplicit(x_1,0,0),(1,a,b)}
Let $v_\lambda$ be the highest weight vector of the  generalized Verma module 
$M_{\lambda}(so(7),\gop_{(1,0,0)})$.
We recall $\omega_1= \varepsilon_1$, $\omega_2=\varepsilon_1+\varepsilon_2$, 
$\omega_3=1/2(\varepsilon_1+\varepsilon_2+\varepsilon_3)$ 
are the fundamental weights of $so(7)$. Recall $\bar{\gob}$ is the Borel subalgebra of $\LieGtwo$.
\begin{enumerate}
\item Suppose $\lambda=x_1\omega_1+\omega_2$. Then
\begin{eqnarray}\notag
v_{\lambda, 0}&:=&v_\lambda \\\notag
v_{\lambda, 1}&:=&\left((-x_1-2)g_{-3}g_{-2}-4g_{-4}+2g_{-2}g_{-1}\right) \cdot v_\lambda \\\notag
v_{\lambda, 2}&:=&\left((x_1^{2}+2x_1)g_{-3}^{2}g_{-2}+(x_1-1)g_{-6}\right.\\\notag&&
\left.+(-2x_1-1)g_{-1}g_{-3}g_{-2}-2g_{-4}g_{-1}+2g_{-1}^{2}g_{-2}\right)\cdot v_{\lambda} \\\notag
\end{eqnarray}
are  linearly independent $\bar\gob$-singular vectors. Moreover, for $x_1\notin\{-1, -7/2, -6\} $, 
the $\LieGtwo$-submodules generated by $v_{\lambda, i}$ have pairwise empty intersections.
\item Suppose $\lambda=x_1\omega_1+\omega_3$. Then
\begin{eqnarray}\notag
v_{\lambda, 0}&:=&v_\lambda \\\notag
v_{\lambda, 1}&:=&\left(-x_1g_{-3}+g_{-1} \right) \cdot v_\lambda \\\notag
v_{\lambda, 2}&:=&\left((2x_1+5)g_{-3}g_{-2}g_{-3}-g_{-6} \right.\\\notag&&\left.+2g_{-4}g_{-3}-2g_{-1}g_{-2}g_{-3}\right)\cdot v_{\lambda}
\end{eqnarray}
are linearly independent $\bar\gob$-singular vectors. Moreover, for $x_1\notin\{ -5, -3, -1\} $, 
the $\LieGtwo$-submodules generated by $v_{\lambda, i}$ 
have pairwise empty intersections. 
\offlineVersionOnly{
\item Suppose $\lambda=x_1\omega_1+2\omega_2$, $\lambda=x_1\omega_1+\omega_2+\omega_3$ 
or $\lambda=x_1\omega_1+2\omega_3$. Then there exist formulas similar 
to the preceding ones that give linearly independent $\bar\gob$-singular vectors 
in $M_{\lambda}(so(7),\gop_{(1,0,0)})$.
Moreover, if  $\lambda=x_1\omega_1+2\omega_3$ or  $\lambda=x_1\omega_1+2\omega_2$, 
for all but finitely many values of $x_1$, the $\bar\gob$-singulars have pairwise empty intersections.

The formulas, as well as the critical values for $x_1$ can be found in the extended version of this 
text available online, \cite{MilevSomberg:BranchingExtended}.
}
\onlineVersionOnly{
\item Suppose $\lambda=x_1\omega_1+2\omega_2$.  Then
\begin{eqnarray}\notag
v_{\lambda,0}&:=&v_\lambda \\\notag
v_{\lambda,1}&:=&\left(-4 g_{-4}+(-x_{1}-3) g_{-3} g_{-2}+2 g_{-1} g_{-2}\right)\cdot v_\lambda\\\notag
v_{\lambda,2}&:=&
\left((4x_{1}-4) g_{-6}-8 g_{-4} g_{-1}+(2x_{1}^{2}+6x_{1}) g_{-3}^{2} g_{-2}\right. \\\notag&&
\left.+(-4x_{1}-4) g_{-1} g_{-3} g_{-2}+4 g_{-1}^{2} g_{-2}\right)\cdot v_\lambda\\\notag
v_{\lambda,3}&:=&\left( (-10x_{1}-20) g_{-4} g_{-3} g_{-2}+20 g_{-4} g_{-1} g_{-2}-20 g_{-4}^{2}\right.\\\notag&&
+(-x_{1}^{2}-5x_{1}-6) g_{-3}^{2} g_{-2}^{2}+(x_{1}^{2}+5x_{1}+6) g_{-2} g_{-3}^{2} g_{-2}\\\notag&&
\left. +(5x_{1}+10) g_{-1} g_{-3} g_{-2}^{2}-10 g_{-1}^{2} g_{-2}^{2}\right)\cdot v_\lambda\\\notag
v_{\lambda,4}&:=&
\left(
(12x_{1}^{2}+48x_{1}+48) g_{-9}+(6x_{1}^{2}+24x_{1}+24) g_{-8} g_{-2}+(12x_{1}^{2}\right.\\\notag&&
+60x_{1}-168) g_{-6} g_{-4}+(3x_{1}^{3}+24x_{1}^{2}-72) g_{-6} g_{-3} g_{-2}\\\notag&& 
+(-6x_{1}^{2}-30x_{1}+84) g_{-6} g_{-1} g_{-2}+(6x_{1}^{3}+51x_{1}^{2}+72x_{1}-12) g_{-4} g_{-3}^{2} g_{-2}\\\notag&&
+(-18x_{1}^{2}-138x_{1}-84) g_{-4} g_{-1} g_{-3} g_{-2}+(24x_{1}+168) g_{-4} g_{-1}^{2} g_{-2}\\\notag&&
+(-24x_{1}-168) g_{-4}^{2} g_{-1}+(1/2x_{1}^{4}+6x_{1}^{3}+41/2x_{1}^{2}+21x_{1}) g_{-3}^{3} g_{-2}^{2}\\\notag&&
+(-3x_{1}^{3}-57/2x_{1}^{2}-54x_{1}-18) g_{-1} g_{-3}^{2} g_{-2}^{2}\\\notag&&
+(3x_{1}^{2}+18x_{1}+24) g_{-1} g_{-2} g_{-3}^{2} g_{-2}+(-12x_{1}-84) g_{-1}^{3} g_{-2}^{2}\\\notag&&
\left.+(9x_{1}^{2}+69x_{1}+42) g_{-1}^{2} g_{-3} g_{-2}^{2}\right)\cdot v_\lambda
\\\notag
v_{\lambda,5}&:=&
\left(
(8x_{1}^{2}+24x_{1}+16) g_{-9} g_{-1}+(-24x_{1}^{2}-72x_{1}+112) g_{-8} g_{-4}\right.\\\notag&&
+ (-8x_{1}^{3}-36x_{1}^{2}-12x_{1}+56) g_{-8} g_{-3} g_{-2}+(16x_{1}^{2}+48x_{1}-48) g_{-8} g_{-1} g_{-2}\\\notag&& 
+(16x_{1}^{2}+8x_{1}-168) g_{-6} g_{-4} g_{-1}\\\notag&&
+(-4x_{1}^{4}-14x_{1}^{3}+12x_{1}^{2}+34x_{1}-28) g_{-6} g_{-3}^{2} g_{-2}\\\notag&& 
+(8x_{1}^{3}+12x_{1}^{2}-60x_{1}-24) g_{-6} g_{-1} g_{-3} g_{-2}+(-8x_{1}^{2}-4x_{1}+84) g_{-6} g_{-1}^{2} g_{-2}\\\notag&& 
+(-4x_{1}^{3}+28x_{1}-56) g_{-6}^{2}+(8x_{1}^{3}+28x_{1}^{2}-12x_{1}-32) g_{-4} g_{-1} g_{-3}^{2} g_{-2}\\\notag&& 
+(16x_{1}+56) g_{-4} g_{-1}^{3} g_{-2}+(-16x_{1}^{2}-48x_{1}+28) g_{-4} g_{-1}^{2} g_{-3} g_{-2}\\\notag&& 
+(-16x_{1}-56) g_{-4}^{2} g_{-1}^{2}+(-1/3x_{1}^{5}-5/2x_{1}^{4}-5x_{1}^{3}+5/6x_{1}^{2}+7x_{1}) g_{-3}^{4} g_{-2}^{2}\\\notag&& 
+(4/3x_{1}^{4}+20/3x_{1}^{3}+5x_{1}^{2}-25/3x_{1}-14/3) g_{-1} g_{-3}^{3} g_{-2}^{2}+(-8x_{1}-28) g_{-1}^{4} g_{-2}^{2}\\\notag&& 
+(8x_{1}^{2}+24x_{1}-14) g_{-1}^{3} g_{-3} g_{-2}^{2}+(-4x_{1}^{3}-14x_{1}^{2}+2x_{1}+8) g_{-1}^{2} g_{-3}^{2} g_{-2}^{2}\\\notag&& 
\left.+(4x_{1}+8) g_{-1}^{2} g_{-2} g_{-3}^{2} g_{-2} \right)\cdot v_\lambda\\\notag
\end{eqnarray}
are linearly independent $\bar\gob$-singular vectors.

Moreover, for $x_1\notin\{ 0, -1, -4$, $-3$, $-9/2, -2, -8$, $-6, -7, -5, -9\}$, 
the $\LieGtwo$-submodules generated by $v_{\lambda, i}$ have pairwise empty intersections. 

\item Suppose $\lambda=x_1\omega_1+\omega_2+\omega_3$. Then
\begin{eqnarray}\notag
v_{\lambda, 0}&:=&v_\lambda \\\notag
v_{\lambda, 1}&:=&\left( -x_1g_{-3}+g_{-1}\right) \cdot v_\lambda \\\notag
v_{\lambda, 2}&:=&\left( 5g_{-1}g_{-2}+(-2x_1-4)g_{-3}g_{-2}+(x_1+2)g_{-2}g_{-3}-5g_{-4}  \right) \cdot v_\lambda \\\notag
v_{\lambda, 3}&:=&(  (1/6x_1^{3}+5/3x_1^{2}+8/3x_1)g_{-3}^{2}g_{-2}\\\notag&&
+(-1/2x_1^{2}-x_1)g_{-3}g_{-2}g_{-3}\\\notag&&
+(1/6x_1^{2}+4/3x_1-3)g_{-6}+(2/3x_1^{2}+10/3x_1-1)g_{-4}g_{-3}\\\notag&&
+(-2/3x_1^{2}-16/3x_1-3)g_{-1}g_{-3}g_{-2}+(-x_1-7)g_{-4}g_{-1}\\\notag&&
+(x_1+7)g_{-1}^{2}g_{-2}+(x_1+2)g_{-1}g_{-2}g_{-3}) \cdot v_\lambda \\\notag
v_{\lambda, 4}&:=&((1/6x_1^{3}+5/3x_1^{2}+17/3x_1+6)g_{-3}g_{-2}g_{-3}\\\notag&&
+(-1/8x_1^{2}-5/4x_1-2)g_{-3}^{2}g_{-2}\\\notag&&
+(-1/12x_1^{2}-2/3x_1-9/4)g_{-6}+(1/2x_1^{2}+35/12x_1+31/12)g_{-4}g_{-3}\\\notag&&
+(-1/6x_1^{2}-11/12x_1-7/6)g_{-1}g_{-2}g_{-3}\\\notag&&
+(-1/6x_1^{2}-13/12x_1-1/4)g_{-1}g_{-3}g_{-2}\\\notag&&
+(-1/3x_1-23/12)g_{-4}g_{-1}+(1/3x_1+23/12)g_{-1}^{2}g_{-2}) \cdot v_\lambda \\\notag
v_{\lambda,5}&:=& ((-x_1^{4}-9/2x_1^{3}-3/2x_1^{2}+7x_1)g_{-3}^{3}g_{-2}+(-3x_1^{3}-3x_1^{2} \\\notag&& 
+18x_1-12)g_{-6}g_{-3}+(3x_1^{3}+21/2x_1^{2}-9/2x_1-9)g_{-1}g_{-3}^{2}g_{-2} \\\notag
&& +(-3x_1^{2}-15x_1+18)g_{-8}+(3x_1^{2}+3x_1-30)g_{-6}g_{-1}\\\notag&&
+(6x_1^{2}+12x_1-24)g_{-4}g_{-1}g_{-3} \\\notag&& 
+(-6x_1^{2}-18x_1+12)g_{-1}^{2}g_{-3}g_{-2}+(-3x_1^{2}-3x_1+6)g_{-1}g_{-3}g_{-2}g_{-3} \\\notag&& 
+(-6x_1-21)g_{-4}g_{-1}^{2}+(6x_1+21)g_{-1}^{3}g_{-2}+(3x_1+6)g_{-1}^{2}g_{-2}g_{-3})\cdot v_\lambda \\\notag
v_{\lambda,6}&:=& ((1/2x_1^{4}+29/4x_1^{3}+153/4x_1^{2}+173/2x_1+70)g_{-3}^{2}g_{-2}^{2}g_{-3}\\\notag&& 
+(1/2x_1^{3}+11/2x_1^{2}+19x_1+20)g_{-6}g_{-2}g_{-3} \\\notag&&
+(-x_1^{3}-11x_1^{2}-38x_1-40)g_{-6}g_{-3}g_{-2}\\\notag&& 
+(-1/6x_1^{4}-29/12x_1^{3}-51/4x_1^{2}-173/6x_1-70/3)g_{-2}g_{-3}^{3}g_{-2}\\\notag&&  
+(4x_1^{3}+93/2x_1^{2}+349/2x_1+210)g_{-4}g_{-3}g_{-2}g_{-3}\\\notag&&
+(-1/2x_1^{3}-27/4x_1^{2}-121/4x_1-45)g_{-4}g_{-3}^{2}g_{-2}\\\notag&& 
+(-2x_1^{3}-93/4x_1^{2}-349/4x_1-105)g_{-1}g_{-3}g_{-2}^{2}g_{-3}\\\notag&&
+(-5/2x_1^{2}-45/2x_1-50)g_{-9}\\\notag&& 
+(-5/2x_1^{2}-45/2x_1-50)g_{-8}g_{-2} +(5x_1^{2}+45x_1+100)g_{-4}^{2}g_{-3}\\\notag&&
+(-5/2x_1^{2}-45/2x_1-50)g_{-6}g_{-4}\\\notag&& 
+(5/2x_1^{2}+45/2x_1+50)g_{-6}g_{-1}g_{-2}\\\notag&&
 +(-5x_1^{2}-45x_1-100)g_{-4}g_{-1}g_{-2}g_{-3}\\\notag&& 
+(1/2x_1^{3}+27/4x_1^{2}+121/4x_1+45)g_{-1}g_{-2}g_{-3}^{2}g_{-2}\\\notag&&
+(5/2x_1^{2}+45/2x_1+50)g_{-1}^{2}g_{-2}^{2}g_{-3}) \cdot v_\lambda \\\notag
\end{eqnarray}
\begin{eqnarray}\notag
v_{\lambda,7}&:=&((-1/24x_1^{6}-37/48x_1^{5}-265/48x_1^{4}-115/6x_1^{3}\\\notag&&
-129/4x_1^{2}-21x_1)g_{-3}^{3}g_{-2}^{2}g_{-3}\\\notag&& 
+(-1/4x_1^{5}-15/4x_1^{4}-77/4x_1^{3}-147/4x_1^{2}-6x_1+36)g_{-6}g_{-3}g_{-2}g_{-3}\\\notag&& 
+(1/8x_1^{5}+15/8x_1^{4}+10x_1^{3}+87/4x_1^{2}+51/4x_1-9)g_{-6}g_{-3}^{2}g_{-2}\\\notag&&  
+(-1/12x_1^{5}-7/6x_1^{4}-139/24x_1^{3}-287/24x_1^{2}-17/2x_1)g_{-4}g_{-3}^{3}g_{-2}\\\notag&&  
+(1/8x_1^{5}+2x_1^{4}+187/16x_1^{3}+487/16x_1^{2}+33x_1+9)g_{-1}g_{-3}^{2}g_{-2}^{2}g_{-3}\\\notag&&  
+(-1/4x_1^{4}-9/4x_1^{3}-8x_1^{2}-33/2x_1-18)g_{-9}g_{-3}\\\notag&&
+(-1/4x_1^{4}-3x_1^{3}-53/4x_1^{2}-51/2x_1-18)g_{-8}g_{-2}g_{-3}\\\notag&&  
+(1/4x_1^{4}+15/4x_1^{3}+37/2x_1^{2}+69/2x_1+18)g_{-8}g_{-3}g_{-2}\\\notag&&  
+(1/8x_1^{4}+3/2x_1^{3}+41/8x_1^{2}+9/4x_1-9)g_{-6}^{2}\\\notag&&
+(-1/4x_1^{4}-11/4x_1^{3}-7x_1^{2}+9x_1+36)g_{-6}g_{-4}g_{-3}\\\notag&&  
+(1/4x_1^{4}+3x_1^{3}+21/2x_1^{2}+25/4x_1-15)g_{-6}g_{-1}g_{-2}g_{-3}\\\notag&&  
+(-1/4x_1^{4}-13/4x_1^{3}-14x_1^{2}-43/2x_1-6)g_{-6}g_{-1}g_{-3}g_{-2}\\\notag&&  
+(-1/4x_1^{4}-29/8x_1^{3}-145/8x_1^{2}-73/2x_1-24)g_{-1}^{2}g_{-3}g_{-2}^{2}g_{-3}\\\notag&&  
+(1/2x_1^{4}+29/4x_1^{3}+145/4x_1^{2}+73x_1+48)g_{-4}g_{-1}g_{-3}g_{-2}g_{-3}\\\notag&&  
+(-1/12x_1^{4}-x_1^{3}-25/6x_1^{2}-27/4x_1-3)g_{-1}g_{-2}g_{-3}^{3}g_{-2}\\\notag&& 
+(-1/2x_1^{3}-31/4x_1^{2}-143/4x_1-51)g_{-8}g_{-1}g_{-2}\\\notag&&  
+(1/4x_1^{3}+23/8x_1^{2}+87/8x_1+27/2)g_{-1}^{2}g_{-2}g_{-3}^{2}g_{-2}\\\notag&&  
+(1/4x_1^{3}+13/4x_1^{2}+27/2x_1+18)g_{-1}^{3}g_{-2}^{2}g_{-3}\\\notag&&
+(1/2x_1^{3}+13/2x_1^{2}+27x_1+36)g_{-8}g_{-4}\\\notag&&  
+(-1/4x_1^{3}-23/8x_1^{2}-87/8x_1-27/2)g_{-4}g_{-1}g_{-3}^{2}g_{-2} \\\notag&&
+ (1/4x_1^{3}+13/4x_1^{2}+27/2x_1+18)g_{-6}g_{-1}^{2}g_{-2} \\\notag&&  
+(1/2x_1^{3}+13/2x_1^{2}+27x_1+36)g_{-4}^{2}g_{-1}g_{-3}\\\notag&&
+(-1/2x_1^{3}-13/2x_1^{2}-27x_1-36)g_{-4}g_{-1}^{2}g_{-2}g_{-3}\\\notag&&  
+(-1/4x_1^{3}-13/4x_1^{2}-27/2x_1-18)g_{-6}g_{-4}g_{-1}\\\notag&&
+(-5/4x_1^{2}-35/4x_1-15)g_{-9}g_{-1})    \cdot v_\lambda
\end{eqnarray}
are linearly independent $\bar\gob$-singular vectors. 
\item Suppose $\lambda=x_1\omega_1+2\omega_3$. Then
\begin{eqnarray}\notag
v_{\lambda,0}&:=&v_\lambda \\\notag
v_{\lambda,1}&:=&\left((-x_{1})g_{-3}+2 g_{-1}\right)\cdot v_\lambda
\\\notag
v_{\lambda,2}&:=&\left((2x_{1}^{2}-2x_{1}) g_{-3}^{2}+(-4x_{1}+4) g_{-1} g_{-3}+4 g_{-1}^{2}\right)\cdot v_\lambda
\\\notag
v_{\lambda,3}&:=&\left((-2x_{1}-8) g_{-6}+(2x_{1}+8) g_{-4} g_{-3}+(2x_{1}^{2}+14x_{1}+24) g_{-3} g_{-2} g_{-3}\right. \\\notag&&
\left.+(-x_{1}^{2}-7x_{1}-12) g_{-2} g_{-3}^{2}+(-2x_{1}-8) g_{-1} g_{-2} g_{-3}\right)\cdot  v_\lambda
\\\notag
v_{\lambda,4}&:=& \left((-x_{1}^{2}-10x_{1}-21) g_{-8}\right.\\\notag&&
+(-1/2x_{1}^{3}-7/2x_{1}^{2}-15/2x_{1}-9/2) g_{-6} g_{-3}\\\notag&&
+(x_{1}^{2}+7x_{1}+12) g_{-6} g_{-1}+(x_{1}^{3}+13/2x_{1}^{2}+10x_{1}-3/2) g_{-4} g_{-3}^{2}\\\notag&&
+(-x_{1}^{2}-7x_{1}-12) g_{-4} g_{-1} g_{-3}+(1/2x_{1}^{4}+5x_{1}^{3}\\\notag&&
+33/2x_{1}^{2}+18x_{1}) g_{-3}^{2} g_{-2} g_{-3}\\\notag&&
+(-x_{1}^{3}-19/2x_{1}^{2}-28x_{1}-51/2) g_{-1} g_{-3} g_{-2} g_{-3}\\\notag&&
+(3/2x_{1}^{2}+9x_{1}+27/2) g_{-1} g_{-2} g_{-3}^{2}\\\notag&&
\left.+(x_{1}^{2}+7x_{1}+12) g_{-1}^{2} g_{-2} g_{-3}\right)\cdot v_\lambda
\\\notag
v_{\lambda,5}&:=&\left(
(4/3x_{1}^{3}+38/3x_{1}^{2}+118/3x_{1}+40) g_{-9} g_{-3}\right. \\\notag&&
+(-4/3x_{1}^{2}-28/3x_{1}-16) g_{-9} g_{-1} \\\notag&&
+(-4/3x_{1}^{2}-28/3x_{1}-16) g_{-8} g_{-4}\\\notag&&
+(4/3x_{1}^{3}+38/3x_{1}^{2}+118/3x_{1}+40) g_{-8} g_{-2} g_{-3}\\\notag&&
+(4/3x_{1}^{3}+14x_{1}^{2}+146/3x_{1}+56) g_{-6} g_{-4} g_{-3}\\\notag&&
+(4/3x_{1}^{4}+18x_{1}^{3}+90x_{1}^{2}+592/3x_{1}+160) g_{-6} g_{-3} g_{-2} g_{-3}\\\notag&&
+(-2/3x_{1}^{4}-9x_{1}^{3}-45x_{1}^{2}-296/3x_{1}-80) g_{-6} g_{-2} g_{-3}^{2}\\\notag&&
+(-4/3x_{1}^{3}-14x_{1}^{2}-146/3x_{1}-56) g_{-6} g_{-1} g_{-2} g_{-3}\\\notag&&
+(-2/3x_{1}^{3}-22/3x_{1}^{2}-80/3x_{1}-32) g_{-6}^{2}\\\notag&&
+(-4/3x_{1}^{4}-52/3x_{1}^{3}-251/3x_{1}^{2}-533/3x_{1}-140) g_{-4} g_{-3}^{2} g_{-2} g_{-3}\\\notag&&
+(4/3x_{1}^{3}+14x_{1}^{2}+146/3x_{1}+56) g_{-4} g_{-1} g_{-2} g_{-3}^{2}\\\notag&&
+(-4/3x_{1}^{3}-14x_{1}^{2}-146/3x_{1}-56) g_{-4}^{2} g_{-3}^{2}\\\notag&&
+(-1/3x_{1}^{5}-11/2x_{1}^{4}-36x_{1}^{3}-701/6x_{1}^{2}-188x_{1}-120) g_{-3}^{2} g_{-2}^{2} g_{-3}^{2}\\\notag&&
+(2/3x_{1}^{4}+26/3x_{1}^{3}+251/6x_{1}^{2}+533/6x_{1}+70) g_{-1} g_{-3} g_{-2}^{2} g_{-3}^{2}\\\notag&&
\left.+(-2/3x_{1}^{3}-7x_{1}^{2}-73/3x_{1}-28) g_{-1}^{2} g_{-2}^{2} g_{-3}^{2} \right)\cdot v_\lambda\notag
\end{eqnarray}
are linearly independent $\bar\gob$-singular vectors. Moreover, for $x_1\notin$ $\{ -5, -3, -6,$ $-4, -7/2,$ $-1, -7, 0, -2\}$, 
the $\LieGtwo$-submodules generated by $v_{\lambda, i}$ have pairwise empty intersections. 
} 
\end{enumerate}

\end{theorem}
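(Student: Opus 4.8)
The plan is to treat the three assertions---$\bar\gob$-singularity, linear independence, and (for generic $x_1$) disjointness of the generated submodules---separately, after one structural reduction. Since $\gop_{(1,0,0)}$ has a finite branching problem over $i(\LieGtwo)$ (this follows from Lemma~\ref{leStructureB3overG2}.3.(c) together with the remarks after Definition~\ref{defFiniteBranchingProblem}), Proposition~\ref{propFiniteBranchingVermaEqualsTopPart} gives $Q_\lambda\simeq M_\lambda(so(7),\gop_{(1,0,0)})$, so the whole module is the ``top level'' and no singular vectors are missed. The nilradical $\bar\gom$ of $\bar\gob$ is generated by the simple root vectors $\bar g_1,\bar g_2$, so by \eqref{eqEmbedG2B3accordingToMcGovern} a $\bar\goh$-weight vector $v$ is $\bar\gob$-singular precisely when $(g_1+g_3)\cdot v=0$ and $g_2\cdot v=0$. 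These are the only identities one must check for each listed $v_{\lambda,i}$.

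For the first assertion I would verify these two identities directly, working in the realization $M_\lambda(so(7),\gop_{(1,0,0)})\simeq U(\gon_-)\otimes V_\lambda(\gol)$ of \eqref{eqGenVermaIsUnTimesV}. Each $v_{\lambda,i}$ is written as $u_i\cdot v_\lambda$ with $u_i\in U(\gon_-)$, and one commutes $g_2\in\gol$, $g_3\in\gol$ and $g_1\in\gon$ to the right past $u_i$; the boundary terms vanish because $\gon$ annihilates the generator $1\otimes v_\lambda^{V}$ and the positive Levi vectors $g_2,g_3$ kill the $\gol$-highest weight vector $v_\lambda^{V}$, so only the iterated commutators $[g_2,u_i]$ and $[g_1+g_3,u_i]$ survive, and after repeatedly commuting back to the right these must reduce to $0$. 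The mere existence of such vectors is guaranteed abstractly, for $x_1$ in the Zariski-open set where Condition~B (Definition~\ref{defSufficientlyGenericWeight}) holds, by Theorem~\ref{thSufficientlyGenericWeightEnablesFDbranching}: one first decomposes $V_\lambda(\gol)$ over $\bar\gol$ to locate the $n(\mu,\lambda)$ many $\bar\gob\cap\bar\gol$-singular vectors, then lifts each by applying a product of central elements of the form $i(\bar c_1-p_1(\nu))$. The displayed formulas are exactly (normalizations of) these lifts, and I would confirm their coefficients by the commutator bookkeeping above. This computation is finite but voluminous---it is \emph{the main obstacle}, and in practice is delegated to the symbolic package \cite{Milev:vpf}; the degree-$7$ vector $v_{\lambda,7}$ in part (4) is the worst case.

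Linear independence is then immediate. Each $v_{\lambda,i}$ is a $\bar\goh$-weight vector of weight $\mu_i=\pr(\lambda)-\gamma_i$, where the $\gamma_i$ are distinct nonnegative integral combinations of positive roots of $\LieGtwo$ (read off from the monomials in $u_i$), so the $v_{\lambda,i}$ lie in pairwise distinct $\bar\goh$-weight spaces. Since each formula contains a monomial whose coefficient does not vanish for the listed range of $x_1$, the vectors are nonzero, and being in distinct weight spaces they are linearly independent.

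The disjointness of the generated $\LieGtwo$-submodules is the part requiring the exceptional values of $x_1$, and I would argue it through central characters. Being $\bar\gob$-singular, $v_{\lambda,i}$ generates a highest weight $\LieGtwo$-module $U(\LieGtwo)\cdot v_{\lambda,i}=U(\bar\gon_-)\cdot v_{\lambda,i}$ of highest weight $\mu_i$, all of whose composition factors $L_\nu$ have $\nu$ linked to $\mu_i$. A nonzero intersection of two such submodules would contain a common composition factor, forcing $\mu_i$ and $\mu_j$ to be linked, hence to have equal central $\LieGtwo$-character; by the Harish-Chandra isomorphism this in particular forces $p_1(\mu_i)=p_1(\mu_j)$ for the quadratic Casimir value $p_1$. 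Thus whenever $\lambda$ satisfies the strong Condition~B the submodules meet trivially. Finally, since $\mu_i=x_1\psi_1+c_i$ with $c_i$ independent of $x_1$, the difference $p_1(\mu_i)-p_1(\mu_j)$ is \emph{linear} in $x_1$ (the quadratic term $x_1^2\scalarSA{\psi_1}{\psi_1}$ cancels), so each pair $(i,j)$ rules out at most one value of $x_1$; the exceptional sets listed in the statement are the union over pairs of these finitely many roots, and outside them strong Condition~B---hence pairwise disjointness---holds.
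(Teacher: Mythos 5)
Your overall strategy coincides with the paper's own proof: both rest on the finite-branching reduction (Proposition \ref{propFiniteBranchingVermaEqualsTopPart}), on realizing the listed vectors as Casimir-projector lifts of $\bar\gob\cap\bar\gol$-singular vectors of $V_\lambda(\gol)$ via Theorem \ref{thSufficientlyGenericWeightEnablesFDbranching}, on delegating the resulting polynomial identities to machine computation, and on the same central-character argument (pairwise distinct Casimir values $p_1(\mu_i)$) for the disjointness of the generated submodules. The one device of the paper you do not reproduce is working over the base field $\mathbb C(x_1)$, where the strong Condition B holds automatically because the $p_1(\mu)$ are pairwise distinct elements of $\mathbb C(x_1)$; specialization to numerical $x_1$ then only needs non-vanishing of the PBW coefficients. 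Your weaker formulation ("for $x_1$ in the Zariski-open set where Condition B holds") is repaired by your insistence on verifying $g_2\cdot v_{\lambda,i}=0$ and $(g_1+g_3)\cdot v_{\lambda,i}=0$ directly, since these are polynomial identities in $x_1$ and hence, once valid generically, valid for every $x_1$.

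There is, however, a genuine gap in your linear-independence argument. You claim the $v_{\lambda,i}$ lie in pairwise distinct $\bar\goh$-weight spaces because the shifts $\gamma_i$ are distinct. This is false in case (4), $\lambda=x_1\omega_1+\omega_2+\omega_3$: there the weight $\mu=x_1\psi_1+\psi_2$ occurs with multiplicity $n(\mu,\lambda)=m(\mu,\lambda)=2$ (exactly the reason this case is excluded from Corollary \ref{corGenVermaDecomposB3G2}), and the two vectors $v_{\lambda,3}$ and $v_{\lambda,4}$ both have $\bar\goh$-weight $\pr(\lambda)-2\alpha_1-\alpha_2=x_1\psi_1+\psi_2$; they arise from the two distinct $\mathbb C h_2\oplus\mathbb C g_2$-singular vectors $m_9,m_{10}$ of equal weight in Table \ref{tableHWVover(1,0,1)}. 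For this pair your weight-space argument proves nothing, and linear independence must instead be checked on the vectors themselves, e.g.\ by noting that their coefficient tuples with respect to the PBW monomials $g_{-3}^{2}g_{-2},\ g_{-3}g_{-2}g_{-3},\ g_{-6},\ g_{-4}g_{-3},\dots$ are non-proportional for every value of $x_1$. For the same reason your blanket claim that each pair $(i,j)$ excludes at most one value of $x_1$ fails here: $p_1(\mu_3)-p_1(\mu_4)$ is identically zero rather than a nonzero linear polynomial, so no choice of $x_1$ separates these two submodules by central character --- consistently, the theorem makes no disjointness claim in case (4), a restriction your argument should have recorded explicitly.
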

\offlineVersionOnly{
The proof of this theorem can be found in the extended version of this 
text available online, \cite{MilevSomberg:BranchingExtended}.
}
\onlineVersionOnly{
\begin{proof}
Fix the base field to be the field $\mathbb C(x_1)$ of rational functions in the variable $x_1$.

We recall $\gol$ denotes the reductive Levi part of $\gop_{(1,0,0)}$. 
Then $\gol\simeq\mathbb C h_1 \oplus so(5)$, 
where the $so(5)$-part is generated by the simple Lie algebra generators $g_{ 2}$, $g_{-2}$, 
$ g_{3}$ and $g_{-3}$. The finite dimensional $\gol$-module $V_{\lambda}(\gol)$, 
inducing the generalized Verma module $M_\lambda(\gog, \gop)$, has a basis of the form 
$u_i\cdot v_\lambda$, where each element $u_i $ is a product of elements of the 
form  $g_{-2}^{s_j}g_{-3}^{t_j}$, where $s_j, t_j\in \mathbb Z_{\geq 0}$. 
Such monomial bases as well as the actions of $g_2$ and $g_3$ on them are given in the following table.

\footnotesize
\begin{tabular}{lllll} \end{tabular}

\begin{longtable}{|lllll|}
\caption{Monomial bases of the inducing modules of certain generalized Verma modules for $\gop_{(1,0,0)}$ \label{tableMonBases(1,0,0)}  
The weights are sorted in increasing gr-lex-order 
with respect to their coordinates in simple basis 
(weights that are lower in the table have gr-lex-higher weight). } \\
\hline \multicolumn{5}{|c|}{ Highest weight $\lambda=x_{1}\omega_{1}+\omega_{2}$ (restricts to natural $so(5)$-module)}\\\hline Element& weight & monomial expression&action of $g_{2}$&action of $g_{3}$\\ $m_{1}$& $(x_{1}+2)\omega_{1}-\omega_{2}$&$g_{-2}g_{-3}^{2}g_{-2} v_\lambda$&$m_{2}$&$0$\\ $m_{2}$& $(x_{1}+1)\omega_{1}+\omega_{2}-2\omega_{3}$&$g_{-3}^{2}g_{-2} v_\lambda$&$0$&$2m_{3}$\\ $m_{3}$& $(x_{1}+1)\omega_{1}$&$g_{-3}g_{-2} v_\lambda$&$0$&$2m_{4}$\\ $m_{4}$& $(x_{1}+1)\omega_{1}-\omega_{2}+2\omega_{3}$&$g_{-2} v_\lambda$&$m_{5}$&$0$\\ $m_{5}$& $x_{1}\omega_{1}+\omega_{2}$&$ v_\lambda$&$0$&$0$\\
\hline\hline \multicolumn{5}{|c|}{ Highest weight $\lambda=x_{1}\omega_{1}+\omega_{3}$ (restricts to spinor $so(5)$-module)}\\\hline Element& weight & monomial expression&action of $g_{2}$&action of $g_{3}$\\ $m_{1}$& $(x_{1}+1)\omega_{1}-\omega_{3}$&$g_{-3}g_{-2}g_{-3} v_\lambda$&$0$&$m_{2}$\\ $m_{2}$& $(x_{1}+1)\omega_{1}-\omega_{2}+\omega_{3}$&$g_{-2}g_{-3} v_\lambda$&$m_{3}$&$0$\\ $m_{3}$& $x_{1}\omega_{1}+\omega_{2}-\omega_{3}$&$g_{-3} v_\lambda$&$0$&$m_{4}$\\ $m_{4}$& $x_{1}\omega_{1}+\omega_{3}$&$ v_\lambda$&$0$&$0$\\ 
\hline\hline \multicolumn{5}{|c|}{ Highest weight $\lambda=x_{1}\omega_{1}+2\omega_{2}$}\\\hline Element& weight & monomial expression&action of $g_{2}$&action of $g_{3}$\\ $m_{1}$& $(x_{1}+4)\omega_{1}-2\omega_{2}$&$g_{-2}^{2}g_{-3}^{4}g_{-2}^{2} v_\lambda$&$2m_{2}$&$0$\\ $m_{2}$& $(x_{1}+3)\omega_{1}-2\omega_{3}$&$g_{-2}g_{-3}^{4}g_{-2}^{2} v_\lambda$&$2m_{3}$&$4m_{4}$\\ $m_{3}$& $(x_{1}+2)\omega_{1}+2\omega_{2}-4\omega_{3}$&$g_{-3}^{4}g_{-2}^{2} v_\lambda$&$0$&$4m_{5}$\\ $m_{4}$& $(x_{1}+3)\omega_{1}-\omega_{2}$&$g_{-2}g_{-3}^{3}g_{-2}^{2} v_\lambda$&$m_{5}$&$6m_{6}$\\ $m_{5}$& $(x_{1}+2)\omega_{1}+\omega_{2}-2\omega_{3}$&$g_{-3}^{3}g_{-2}^{2} v_\lambda$&$0$&$6m_{8}$\\ $m_{6}$& $(x_{1}+3)\omega_{1}-2\omega_{2}+2\omega_{3}$&$g_{-2}^{2}g_{-3}^{2}g_{-2} v_\lambda$&$2m_{7}$&$0$\\ $m_{7}$& $(x_{1}+2)\omega_{1}$&$g_{-2}g_{-3}^{2}g_{-2} v_\lambda$&$2m_{9}$&$m_{10}$\\ $m_{8}$& $(x_{1}+2)\omega_{1}$&$g_{-3}^{2}g_{-2}^{2} v_\lambda$&$2m_{9}$&$6m_{10}$\\ $m_{9}$& $(x_{1}+1)\omega_{1}+2\omega_{2}-2\omega_{3}$&$g_{-3}^{2}g_{-2} v_\lambda$&$0$&$2m_{11}$\\ $m_{10}$& $(x_{1}+2)\omega_{1}-\omega_{2}+2\omega_{3}$&$g_{-3}g_{-2}^{2} v_\lambda$&$2m_{11}$&$4m_{12}$\\ $m_{11}$& $(x_{1}+1)\omega_{1}+\omega_{2}$&$g_{-3}g_{-2} v_\lambda$&$0$&$2m_{13}$\\ $m_{12}$& $(x_{1}+2)\omega_{1}-2\omega_{2}+4\omega_{3}$&$g_{-2}^{2} v_\lambda$&$2m_{13}$&$0$\\ $m_{13}$& $(x_{1}+1)\omega_{1}+2\omega_{3}$&$g_{-2} v_\lambda$&$2m_{14}$&$0$\\ $m_{14}$& $x_{1}\omega_{1}+2\omega_{2}$&$ v_\lambda$&$0$&$0$\\
\hline\hline \multicolumn{5}{|c|}{ Highest weight $\lambda=x_{1}\omega_{1}+\omega_{2}+\omega_{3}$}\\\hline Element& weight & monomial expression&action of $g_{2}$&action of $g_{3}$\\ $m_{1}$& $(x_{1}+3)\omega_{1}-\omega_{2}-\omega_{3}$&$g_{-3}g_{-2}^{2}g_{-3}^{3}g_{-2} v_\lambda$&$m_{2}$&$m_{3}$\\ $m_{2}$& $(x_{1}+2)\omega_{1}+\omega_{2}-3\omega_{3}$&$g_{-3}^{3}g_{-2}^{2}g_{-3} v_\lambda$&$0$&$3m_{5}$\\ $m_{3}$& $(x_{1}+3)\omega_{1}-2\omega_{2}+\omega_{3}$&$g_{-2}^{2}g_{-3}^{3}g_{-2} v_\lambda$&$2m_{4}$&$0$\\ $m_{4}$& $(x_{1}+2)\omega_{1}-\omega_{3}$&$g_{-2}g_{-3}^{3}g_{-2} v_\lambda$&$2m_{6}$&$3m_{7}$\\ $m_{5}$& $(x_{1}+2)\omega_{1}-\omega_{3}$&$g_{-3}^{2}g_{-2}^{2}g_{-3} v_\lambda$&$2/3m_{6}$&$4m_{8}$\\ $m_{6}$& $(x_{1}+1)\omega_{1}+2\omega_{2}-3\omega_{3}$&$g_{-3}^{3}g_{-2} v_\lambda$&$0$&$3m_{10}$\\ $m_{7}$& $(x_{1}+2)\omega_{1}-\omega_{2}+\omega_{3}$&$g_{-2}g_{-3}^{2}g_{-2} v_\lambda$&$m_{10}$&$2m_{11}$\\ $m_{8}$& $(x_{1}+2)\omega_{1}-\omega_{2}+\omega_{3}$&$g_{-3}g_{-2}^{2}g_{-3} v_\lambda$&$2m_{9}$&$3m_{11}$\\ $m_{9}$& $(x_{1}+1)\omega_{1}+\omega_{2}-\omega_{3}$&$g_{-3}g_{-2}g_{-3} v_\lambda$&$0$&$m_{12}+m_{13}$\\ $m_{10}$& $(x_{1}+1)\omega_{1}+\omega_{2}-\omega_{3}$&$g_{-3}^{2}g_{-2} v_\lambda$&$0$&$4m_{13}$\\ $m_{11}$& $(x_{1}+2)\omega_{1}-2\omega_{2}+3\omega_{3}$&$g_{-2}^{2}g_{-3} v_\lambda$&$2m_{12}$&$0$\\ $m_{12}$& $(x_{1}+1)\omega_{1}+\omega_{3}$&$g_{-2}g_{-3} v_\lambda$&$2m_{14}$&$m_{15}$\\ $m_{13}$& $(x_{1}+1)\omega_{1}+\omega_{3}$&$g_{-3}g_{-2} v_\lambda$&$m_{14}$&$3m_{15}$\\ $m_{14}$& $x_{1}\omega_{1}+2\omega_{2}-\omega_{3}$&$g_{-3} v_\lambda$&$0$&$m_{16}$\\ $m_{15}$& $(x_{1}+1)\omega_{1}-\omega_{2}+3\omega_{3}$&$g_{-2} v_\lambda$&$m_{16}$&$0$\\ $m_{16}$& $x_{1}\omega_{1}+\omega_{2}+\omega_{3}$&$ v_\lambda$&$0$&$0$\\ 
\hline\hline \multicolumn{5}{|c|}{ Highest weight $\lambda=x_{1}\omega_{1}+2\omega_{3}$}\\\hline Element& weight & monomial expression&action of $g_{2}$&action of $g_{3}$\\ $m_{1}$& $(x_{1}+2)\omega_{1}-2\omega_{3}$&$g_{-3}^{2}g_{-2}^{2}g_{-3}^{2} v_\lambda$&$0$&$2m_{2}$\\ $m_{2}$& $(x_{1}+2)\omega_{1}-\omega_{2}$&$g_{-3}g_{-2}^{2}g_{-3}^{2} v_\lambda$&$2m_{3}$&$2m_{4}$\\ $m_{3}$& $(x_{1}+1)\omega_{1}+\omega_{2}-2\omega_{3}$&$g_{-3}^{2}g_{-2}g_{-3} v_\lambda$&$0$&$2m_{5}$\\ $m_{4}$& $(x_{1}+2)\omega_{1}-2\omega_{2}+2\omega_{3}$&$g_{-2}^{2}g_{-3}^{2} v_\lambda$&$2m_{6}$&$0$\\ $m_{5}$& $(x_{1}+1)\omega_{1}$&$g_{-3}g_{-2}g_{-3} v_\lambda$&$m_{7}$&$2m_{8}$\\ $m_{6}$& $(x_{1}+1)\omega_{1}$&$g_{-2}g_{-3}^{2} v_\lambda$&$2m_{7}$&$2m_{8}$\\ $m_{7}$& $x_{1}\omega_{1}+2\omega_{2}-2\omega_{3}$&$g_{-3}^{2} v_\lambda$&$0$&$2m_{9}$\\ $m_{8}$& $(x_{1}+1)\omega_{1}-\omega_{2}+2\omega_{3}$&$g_{-2}g_{-3} v_\lambda$&$m_{9}$&$0$\\ $m_{9}$& $x_{1}\omega_{1}+\omega_{2}$&$g_{-3} v_\lambda$&$0$&$2m_{10}$\\ $m_{10}$& $x_{1}\omega_{1}+2\omega_{3}$&$ v_\lambda$&$0$&$0$\\ \hline
\end{longtable}
\normalsize
By Lemma \ref{leParabolicsG2inParabolicsB3}, 
$\bar\gop=\bar{\gop}_{(1,0)}\subset\LieGtwo$, 
and $i(\bar \gos)=i([\bar \gol,\bar \gol])$ equals 
the $sl(2)$-subalgebra of $so(7)$ generated by $g_{ 2}$ 
and $g_{-2}$. 

We note that $\mathbb C h_2\oplus\mathbb C  g_2$ is a 
Borel subalgebra of $\bar\gos=[\bar \gol,\bar \gol]$. 
We can decompose $V_\lambda(\gol)$ over $\goh+i(\bar \gol) $ 
to obtain the decomposition indicated in the following table. 

\noindent
\begin{longtable}{|ll|}   \caption{Decomposition of $V_\lambda(\gol)$ over $\goh+i(\bar{\gol})$. For $\nu\in\goh^*$, we abbreviate $V_{\nu}(\goh+i(\bar{\gol}))$ as $V_{\nu}$. \label{tableDecompositionsOver(1,0,1)}}\\\endhead
\hline
$\lambda$  & Decomposition of $V_{\lambda}$ over $i(\bar{\gol})+\goh$ \\\hline
$x_1\omega_1+\omega_2$ &$\begin{array}{l}V_{x_1\omega_1+\omega_{2}}\oplus V_{(x_1+1)\omega_{1}}\\ \oplus V_{(x_1+1)\omega_{1}+\omega_{2}-2\omega_{3}}\end{array}$\\\hline
$x_1\omega_1+\omega_3$ &$\begin{array}{l}V_{x_1\omega_1+\omega_{3}}\oplus V_{x_1\omega_1+\omega_{2}-\omega_{3}}\\ \oplus V_{(x_1+1)\omega_{1}-\omega_{3}}\end{array}$\\\hline
$x_1\omega_1+2\omega_2$&$\begin{array}{l}
V_{x_{1}\omega_{1}+2\omega_{2}}\oplus V_{(x_{1}+1)\omega_{1} +\omega_{2}} \\ \oplus V_{(x_{1}+2)\omega_{1}}\oplus V_{(x_{1}+1)\omega_{1}+2\omega_{2}-2\omega_{3}} \\\oplus  V_{(x_{1}+2)\omega_{1}+\omega_{2}-2\omega_{3}}\oplus V_{(x_{1}+2)\omega_{1}+2\omega_{2}-4\omega_{3}}
\end{array}
$\\\hline
$x_1\omega_1+\omega_2+\omega_3$ &
$\begin{array}{l}
V_{x_1\omega_1+\omega_{2}+\omega_{3}}\oplus V_{x_1\omega_1+\omega_{1}+\omega_{3}}\\ 
\oplus  V_{x_1\omega_1+2\omega_{2}-\omega_{3}}\oplus  2V_{x_1\omega_1+\omega_{1}+\omega_{2}-\omega_{3}} \\ 
\oplus V_{x_1\omega_1+2\omega_{1}-\omega_{3}}\oplus V_{x_1\omega_1+\omega_{1}+2\omega_{2}-3\omega_{3}} \\
\oplus V_{x_1\omega_1+2\omega_{1}+\omega_{2}-3\omega_{3}}
\end{array}$ 
\\\hline
$x_1\omega_1+2\omega_3$ &$\begin{array}{l}
V_{x_{1}\omega_{1}+2\omega_{3}}\oplus V_{x_{1}\omega_{1}+\omega_{2}}\\ 
\oplus V_{(x_{1}+1)\omega_{1}}\oplus V_{x_{1}\omega_{1}+2\omega_{2}-2\omega_{3}} \\ 
\oplus V_{(x_{1}+1)\omega_{1}+\omega_{2}-2\omega_{3}}\oplus V_{(x_{1}+2)\omega_{1}-2\omega_{3}}
\end{array}$\\\hline
\end{longtable}

The $\mathbb C h_2\oplus \mathbb C g_2$-singular vectors  realizing the above decomposition 
are computed as the eigenspace of the action of operator $g_2$ using 
the fourth column of Table \ref{tableMonBases(1,0,0)}. 
The resulting $\mathbb C h_2\oplus \mathbb C g_2$-singular vectors are given in the following table.

\footnotesize
\noindent\begin{longtable}{|llll|}
\caption{$\mathbb C h_2\oplus \mathbb C g_2$-singular vectors and $p_{1}(\mu)$ \label{tableHWVover(1,0,1)}}\\\endhead
\hline\multicolumn{4}{|c|}{Highest weight $\lambda= x_{1}\omega_{1}+\omega_{2}$}\\ weight & sing. vect.& projection $\mu\in\bar h^*$ & $p_1(\mu)$ \\\hline 
$(x_{1}+1)\omega_{1}+\omega_{2}-2\omega_{3} $ & $m_{2} $ & $ (2x_{1}+1)\alpha_{1}+(x_{1}+1)\alpha_{2} $  & $ 1/12x_{1}^{2}+1/2x_{1}+5/12 $ \\ 
$(x_{1}+1)\omega_{1} $ & $m_{3} $ & $ (2x_{1}+2)\alpha_{1}+(x_{1}+1)\alpha_{2} $ & $ 1/12x_{1}^{2}+7/12x_{1}+1/2 $ \\ 
$x_{1}\omega_{1}+\omega_{2} $ & $m_{5} $ & $ (2x_{1}+3)\alpha_{1}+(x_{1}+2)\alpha_{2} $ & $ 1/12x_{1}^{2}+2/3x_{1}+1 $ \\ 
\hline 
\hline\multicolumn{4}{|c|}{Highest weight $\lambda= x_{1}\omega_{1}+\omega_{3}$}\\ weight & sing. vect.& projection $\mu\in\bar h^*$ & $p_1(\mu)$ \\\hline 
$(x_{1}+1)\omega_{1}-\omega_{3} $ & $m_{1} $ & $ 2x_{1}\alpha_{1}+x_{1}\alpha_{2} $ & $ 1/12x_{1}^{2}+5/12x_{1} $\\ 
$x_{1}\omega_{1}+\omega_{2}-\omega_{3} $ & $m_{3} $ & $ (2x_{1}+1)\alpha_{1}+(x_{1}+1)\alpha_{2} $ & $ 1/12x_{1}^{2}+1/2x_{1}+5/12 $\\ 
$x_{1}\omega_{1}+\omega_{3} $ & $m_{4} $ & $ (2x_{1}+2)\alpha_{1}+(x_{1}+1)\alpha_{2} $ & $ 1/12x_{1}^{2}+7/12x_{1}+1/2 $\\ 
\hline 
\hline\multicolumn{4}{|c|}{Highest weight $\lambda= x_{1}\omega_{1}+2\omega_{2}$}\\ weight & sing. vect.& projection $\mu\in\bar h^*$ & $p_1(\mu)$ \\\hline 
$(x_{1}+2)\omega_{1}+2\omega_{2}-4\omega_{3} $ & $m_{3} $ & $ (2x_{1}+2)\alpha_{1}+(x_{1}+2)\alpha_{2} $ & $ 1/12x_{1}^{2}+7/12x_{1}+1 $\\ 
$(x_{1}+2)\omega_{1}+\omega_{2}-2\omega_{3} $ & $m_{5} $ & $ (2x_{1}+3)\alpha_{1}+(x_{1}+2)\alpha_{2} $ & $ 1/12x_{1}^{2}+2/3x_{1}+1 $\\ 
$(x_{1}+2)\omega_{1} $ & $-m_{7}+m_{8} $ & $ (2x_{1}+4)\alpha_{1}+(x_{1}+2)\alpha_{2} $ & $ 1/12x_{1}^{2}+3/4x_{1}+7/6 $\\ 
$(x_{1}+1)\omega_{1}+2\omega_{2}-2\omega_{3} $ & $m_{9} $ & $ (2x_{1}+4)\alpha_{1}+(x_{1}+3)\alpha_{2} $ & $ 1/12x_{1}^{2}+3/4x_{1}+5/3 $ \\ 
$(x_{1}+1)\omega_{1}+\omega_{2} $ & $m_{11} $ & $ (2x_{1}+5)\alpha_{1}+(x_{1}+3)\alpha_{2} $ & $ 1/12x_{1}^{2}+5/6x_{1}+7/4 $ \\ 
$x_{1}\omega_{1}+2\omega_{2} $ & $m_{14} $ & $ (2x_{1}+6)\alpha_{1}+(x_{1}+4)\alpha_{2} $ & $ 1/12x_{1}^{2}+11/12x_{1}+5/2 $ \\ 
\hline 
\hline\multicolumn{4}{|c|}{Highest weight $\lambda= x_{1}\omega_{1}+\omega_{2}+\omega_{3}$}\\ weight &sing. vect.& projection $\mu\in\bar h^*$ & $p_1(\mu)$ \\\hline 
$(x_{1}+2)\omega_{1}+\omega_{2}-3\omega_{3} $ & $m_{2} $ & $ (2x_{1}+1)\alpha_{1}+(x_{1}+1)\alpha_{2} $& $ 1/12x_{1}^{2}+1/2x_{1}+5/12 $ \\ 
$(x_{1}+2)\omega_{1}-\omega_{3} $ & $-1/3m_{4}+m_{5} $ & $ (2x_{1}+2)\alpha_{1}+(x_{1}+1)\alpha_{2} $& $ 1/12x_{1}^{2}+7/12x_{1}+1/2 $ \\ 
$(x_{1}+1)\omega_{1}+2\omega_{2}-3\omega_{3} $ & $m_{6} $ & $ (2x_{1}+2)\alpha_{1}+(x_{1}+2)\alpha_{2} $& $ 1/12x_{1}^{2}+7/12x_{1}+1 $ \\ 
$(x_{1}+1)\omega_{1}+\omega_{2}-\omega_{3} $ & $m_{9} $ & $ (2x_{1}+3)\alpha_{1}+(x_{1}+2)\alpha_{2} $& $ 1/12x_{1}^{2}+2/3x_{1}+1 $ \\ 
$(x_{1}+1)\omega_{1}+\omega_{2}-\omega_{3} $ & $m_{10} $ & $ (2x_{1}+3)\alpha_{1}+(x_{1}+2)\alpha_{2} $& $ 1/12x_{1}^{2}+2/3x_{1}+1 $ \\ 
$(x_{1}+1)\omega_{1}+\omega_{3} $ & $-1/2m_{12}+m_{13} $ & $ (2x_{1}+4)\alpha_{1}+(x_{1}+2)\alpha_{2} $& $ 1/12x_{1}^{2}+3/4x_{1}+7/6 $ \\ 
$x_{1}\omega_{1}+2\omega_{2}-\omega_{3} $ & $m_{14} $ & $ (2x_{1}+4)\alpha_{1}+(x_{1}+3)\alpha_{2} $& $ 1/12x_{1}^{2}+3/4x_{1}+5/3 $ \\ 
$x_{1}\omega_{1}+\omega_{2}+\omega_{3} $ & $m_{16} $ & $ (2x_{1}+5)\alpha_{1}+(x_{1}+3)\alpha_{2} $& $ 1/12x_{1}^{2}+5/6x_{1}+7/4 $ \\ 
\hline 
\hline\multicolumn{4}{|c|}{Highest weight $\lambda= x_{1}\omega_{1}+2\omega_{3}$}\\ weight &sing. vect.& projection $\mu\in\bar h^*$ & $p_1(\mu)$ \\\hline 
$(x_{1}+2)\omega_{1}-2\omega_{3} $ & $m_{1} $ & $ 2x_{1}\alpha_{1}+x_{1}\alpha_{2} $& $ 1/12x_{1}^{2}+5/12x_{1} $ \\ 
$(x_{1}+1)\omega_{1}+\omega_{2}-2\omega_{3} $ & $m_{3} $ & $ (2x_{1}+1)\alpha_{1}+(x_{1}+1)\alpha_{2} $& $ 1/12x_{1}^{2}+1/2x_{1}+5/12 $ \\ 
$(x_{1}+1)\omega_{1} $ & $-2m_{5}+m_{6} $ & $ (2x_{1}+2)\alpha_{1}+(x_{1}+1)\alpha_{2} $& $ 1/12x_{1}^{2}+7/12x_{1}+1/2 $ \\ 
$x_{1}\omega_{1}+2\omega_{2}-2\omega_{3} $ & $m_{7} $ & $ (2x_{1}+2)\alpha_{1}+(x_{1}+2)\alpha_{2} $& $ 1/12x_{1}^{2}+7/12x_{1}+1 $ \\ 
$x_{1}\omega_{1}+\omega_{2} $ & $m_{9} $ & $ (2x_{1}+3)\alpha_{1}+(x_{1}+2)\alpha_{2} $& $ 1/12x_{1}^{2}+2/3x_{1}+1 $ \\ 
$x_{1}\omega_{1}+2\omega_{3} $ & $m_{10} $ & $ (2x_{1}+4)\alpha_{1}+(x_{1}+2)\alpha_{2} $& $ 1/12x_{1}^{2}+3/4x_{1}+7/6 $ \\ 
\hline 
\end{longtable}
\normalsize

Let $\bar \lambda:=\pr (\lambda)$ be the projection of $\lambda$ 
onto $\bar h^*$. By Theorem \ref{thBranchingMultsViaSymmetricTensors} 
and Lemma \ref{leParabolicsG2inParabolicsB3} (c).3 we have that 
$\gop_{(1,0,0)}$ has a finite branching problem over $i(\LieGtwo)$  and 
therefore the coefficient 
of $V_\nu$ in Table \ref{tableDecompositionsOver(1,0,1)} 
equals $m(\pr(\nu), \pr(\lambda))$. This correspondence is 
one to one as no two different weights $\nu$ appearing in 
Table \ref{tableDecompositionsOver(1,0,1)} project to the same weight in 
$\bar \goh^*$ by Lemma \ref{leParabolicsG2inParabolicsB3} (c).3, 
as one sees in the third column of Table \ref{tableHWVover(1,0,1)}.

The quadratic Casimir element $\bar c_1$ of $\LieGtwo$  is given by
\begin{eqnarray}\label{eqCasimirG2}
36 \bar c_1&=&{\bar h}_{1}^{2}+3 {\bar h}_{1} {\bar h}_{2}+3 {\bar h}_{2}^{2}+15 {\bar h}_{2}+9 {\bar g}_{-6} {\bar g}_{6}\\\notag&&
+9 {\bar h}_{1}+9 {\bar g}_{-5} {\bar g}_{5}+3 {\bar g}_{-4} {\bar g}_{4}+3 {\bar g}_{-3} {\bar g}_{3}+3 {\bar g}_{-2} {\bar g}_{2}+9 {\bar g}_{-1} {\bar g}_{1}\quad ,
\end{eqnarray}
and its embedding $i(\bar c_1)$ is given by
\begin{eqnarray*}
12 i(\bar c_1)&=&3h_{2}^{2}+3h_{1}h_{2}+6h_{2}h_{3}+h_{1}^{2}+4h_{1}h_{3}+4h_{3}^{2}+10h_{3}+3g_{-9}g_{9}+5h_{1}+9h_{2}\\&&
+3g_{-8}g_{8}+g_{-7}g_{6}+g_{-6}g_{6}+g_{-7}g_{7}+g_{-6}g_{7}-g_{-5}g_{4}+g_{-4}g_{4}\\&&
+g_{-5}g_{5}-g_{-4}g_{5}+g_{-3}g_{1}+g_{-1}g_{1}+g_{-3}g_{3}+g_{-1}g_{3}+3g_{-2}g_{2} \quad .
\end{eqnarray*}

Let $v_\mu$ be the highest weight vector of a $\LieGtwo$-highest 
weight module of highest weight $\mu=y_1\psi_1+y_2\psi_2$, 
(we recall $\psi_1=\alpha_1+2\alpha_2$ and $\psi_2=2\alpha_1+3\alpha_2$ are the 
fundamental weights of $ \LieGtwo$). Then $\bar c_1$ acts on $v_\mu$ by a constant, 
i.e., $\bar c_1\cdot v_\mu=p_1(\mu) v_\mu$ (see Section \ref{secConstructingSingularVectors}). 
To compute the coefficient $p_1(\mu)$, in the expression for $\bar c_1$  
we  carry out the substitutions $\bar h_2\mapsto 3 y_1 , \bar h_1\mapsto y_2$, 
and set all terms involving the generators $\bar g_i$ to zero:
\[
p_1(\mu):=1/12 y_{2}^{2}+ 1/4 y_{1} y_{2}+ 5/12 y_{2}+ 1/4 y_{1}^{2}+ 3/4 y_{1}\quad .
\]

Therefore we can compute the action of $i(\bar c_1)$ 
on each component of a decomposition of $M_\lambda(\gog,\gop)$ 
into $\bar \gob$-highest modules in the last column of Table \ref{tableHWVover(1,0,1)}.

Consider a $\mathbb C h_2\oplus \mathbb C g_2$-singular vector $m$ of 
$\goh$-weight $\mu$ given in the second column of Table \ref{tableHWVover(1,0,1)}. 
As the base field is $\mathbb C(x_1)$, the strong Condition B 
(Definition \ref{defSufficientlyGenericWeight}) is trivially satisfied: 
indeed, for pairwise different values of $\mu$, the values $p_1(\mu)$ in the fourth column of 
Table \ref{tableHWVover(1,0,1)} are pairwise different elements of $C(x_1)$.
Therefore by Theorem \ref{thSufficientlyGenericWeightEnablesFDbranching}, 
the vectors of the form 
$\left(\prod_{\nu\succ\mu} (i(\bar c_1)-p_1(\nu))\right) \cdot m $ are $\bar \gob$-singular. 
The vectors $v_{\lambda,k}$, given in the statement of the current theorem, 
are indeed up to a scalar in $\mathbb C(x_1)$ equal 
to $\left(\prod_{\nu\succ\mu} (i(\bar c_1)-y(\nu))\right) \cdot m $. 
Since the monomials used in the expressions for 
$v_{\lambda,i}$ are linearly independent and the coefficients 
do not have a common zero, substitutions in the 
variable $x_1$ yields non-zero vectors defined over the field $\mathbb C$. 
As the structure constants of $so(7)$ 
are identical over all fields of characteristic 0, 
substitutions of the variable $x_1$ give the desired 
$\bar \gob$-singular vectors. 

For $x_i$ not belonging to the sets listed in the 
statement of the theorem, the values of $p_1(\mu)$ are pairwise different.
Therefore for those values the $\LieGtwo$-modules 
generated by the vectors $v_{i,\lambda}$ have pairwise empty 
intersections, which completes the proof of the Theorem.
\end{proof}
} 

Except for finitely many values of $x_1$, the formulas for $\gob$-singular 
vectors given in Theorem \ref{leBranchingExplicit(x_1,0,0),(1,a,b)} 
hold in arbitrary highest weight modules of highest weight $\lambda$. 
In fact, the following observation holds.
\begin{corollary}\label{corFormulasEigenVectorsRemainValid}
Let $\lambda\in\goh^*$ be one of the weights
given in Theorem \ref{leBranchingExplicit(x_1,0,0),(1,a,b)}. 
Let $M$ be a $so(7)$-module that has a $\gob$-singular vector $v_\lambda$ 
of $\goh$-weight $\lambda$.
Let the vectors $v_{\lambda,i}$  be defined by the same formulas as in
Theorem \ref{leBranchingExplicit(x_1,0,0),(1,a,b)}. For 
a fixed $v_{\lambda,i}$, let $x_1$ be a number different from the numbers indicated in 
the right column of Table \ref{tableCriticalValuesvlambda} below.
Recall $\bar{\gob}$ is the Borel subalgebra of $\LieGtwo$.

Then  $v_{\lambda,i}$ is non-zero and therefore a $\bar{\gob}$-singular vector 
under the action on $M$ induced by the embedding $\LieAlgPair{\LieGtwo}{so(7)}$.

\begin{longtable}{|rll|}\caption{Values of $x_1$ for each $v_{\lambda,i}$}\label{tableCriticalValuesvlambda}\endhead\hline\multicolumn{3}{|c|}{$\lambda=x_{1}\omega_{1}+\omega_{3}$}
\\
vector& coefficient of $v_\lambda$ in $Sh_{\lambda,i}$ &$x_1\notin $ roots of $Sh_{\lambda,i}$=  \\\hline 
$v_{\lambda,2} $&$\begin{array}{l}2x_{1}^{4}+27x_{1}^{3}  +133x_{1}^{2}+285x_{1}  +225\end{array}$ 
& -3, -5, -5/2 \\ 
$v_{\lambda,1} $&$\begin{array}{l}x_{1}^{2}+x_{1}\end{array}$ 
& 0, -1 \\\hline
\multicolumn{3}{|c|}{$\lambda=x_{1}\omega_{1}+\omega_{2}$}\\vector& coefficient of $v_\lambda$ in $Sh_{\lambda,i}$ &$x_1\notin$ roots of $Sh_{\lambda,i}$= \\\hline $v_{\lambda,2} $&$\begin{array}{l}2x_{1}^{4}+13x_{1}^{3} +25x_{1}^{2}+14x_{1}\end{array}$ 
& 0, -1, -2, -7/2 \\ 
$v_{\lambda,1} $&$\begin{array}{l}x_{1}^{2}+8x_{1}+12\end{array}$ 
& -2, -6 \\\hline
\offlineVersionOnly{
\multicolumn{3}{|c|}{$\lambda=x_{1}\omega_{1}+2\omega_{3}$, $\lambda=x_{1}\omega_{1}+\omega_2+\omega_{3}$, $\lambda=x_{1}\omega_{1}+2\omega_{3}$
}\\
\multicolumn{3}{|c|}{ entries listed in the extended version of this 
text available online, \cite{MilevSomberg:BranchingExtended}.
}\\\hline
} 
\onlineVersionOnly{
\multicolumn{3}{|c|}{$\lambda=x_{1}\omega_{1}+2\omega_{3}$}\\vector& coefficient of $v_\lambda$ in $Sh_{\lambda,i}$ &$x_1\notin$ roots of $Sh_{\lambda,i}$= \\\hline 
$v_{\lambda,5} $
&
$\begin{array}{l}4x_{1}^{10}+152x_{1}^{9}  +2579x_{1}^{8}+25736x_{1}^{7} \\ +167312x_{1}^{6}  +740582x_{1}^{5}  +2260753x_{1}^{4}\\
+4700490x_{1}^{3}  +6371352x_{1}^{2} \\ +5084640x_{1}  +1814400\end{array}$
& -7/2,-5/2,-3,-5,-4,-6 \\ 
$v_{\lambda,4} $&$
\begin{array}{l}2x_{1}^{8}+49x_{1}^{7}  +495x_{1}^{6}+2678x_{1}^{5} \\ +8368x_{1}^{4}+15021x_{1}^{3}   +14175x_{1}^{2} \\ +5292x_{1}\end{array}$ 
& 0, -1, -3, -4, -7, -7/2 \\ 
$v_{\lambda,3} $&$\begin{array}{l}x_{1}^{4}+17x_{1}^{3}  +106x_{1}^{2}+288x_{1}  +288\end{array}$ 
& -3, -4, -6 \\ $v_{\lambda,2} $&$\begin{array}{l}x_{1}^{4}-x_{1}^{2}\end{array}$ 
& 0, 1, -1 \\ $v_{\lambda,1} $&$\begin{array}{l}x_{1}^{2}+2x_{1}\end{array}$ 
& 0, -2 \\\hline\multicolumn{3}{|c|}{$\lambda=x_{1}\omega_{1}+\omega_{2}+\omega_{3}$}\\vector& coefficient of $v_\lambda$ in $Sh_{\lambda,i}$ &$x_1\notin$ roots of $Sh_{\lambda,i}$= \\\hline 
$v_{\lambda,7} $&
$\begin{array}{l}16x_{1}^{14}+784x_{1}^{13} \\ +17496x_{1}^{12}+235424x_{1}^{11} \\ +2130569x_{1}^{10}+13688787x_{1}^{9} \\ +64200218x_{1}^{8}+222353222x_{1}^{7} \\ +568050249x_{1}^{6}+1055574499x_{1}^{5} \\ +1383817036x_{1}^{4}+1208330004x_{1}^{3} \\ +627179616x_{1}^{2}+145212480x_{1}\end{array}$ & 
-7/2,-3,-4,-6,-5,0,-1,-2,-7
\\ 
$v_{\lambda,6} $&
$\begin{array}{l}2x_{1}^{10}+97x_{1}^{9}  +2097x_{1}^{8}+26595x_{1}^{7} \\ +218973x_{1}^{6}+1222044x_{1}^{5} \\ +4676800x_{1}^{4}+12104384x_{1}^{3} \\ +20244528x_{1}^{2}+19716480x_{1} \\ +8467200\end{array}$
& -7/2,-4,-6,-5,-7,-2
\\ $v_{\lambda,5} $&
$\begin{array}{l}2x_{1}^{10}+25x_{1}^{9}  +113x_{1}^{8}+205x_{1}^{7} \\ +53x_{1}^{6}-230x_{1}^{5}  -168x_{1}^{4}\end{array}$ 
& 0, 1, -1, -2, -3, -4, -7/2 \\ 
$v_{\lambda,4} $&
$\begin{array}{l}2x_{1}^{6}+47x_{1}^{5}  +431x_{1}^{4}+1978x_{1}^{3} \\ +4804x_{1}^{2} +5896x_{1}  +2880\end{array}$ 
& -2, -5, -8, -9/2 \\ 
$v_{\lambda,3} $&$\begin{array}{l}2x_{1}^{6}+47x_{1}^{5}  +411x_{1}^{4}+1648x_{1}^{3} \\ +3004x_{1}^{2}+2016x_{1}\end{array}$ 
& 0, -2, -7, -8, -9/2 \\ $v_{\lambda,2} $&$\begin{array}{l}x_{1}^{2}+9x_{1}+14\end{array}$ 
& -2, -7 \\ 
$v_{\lambda,1} $&$\begin{array}{l}x_{1}^{2}+x_{1}\end{array}$ 
& 0, -1 \\\hline\multicolumn{3}{|c|}{$\lambda=x_{1}\omega_{1}+2\omega_{2}$}\\vector& coefficient of $v_\lambda$ in $Sh_{\lambda,i}$ &$x_1\notin$ roots of $Sh_{\lambda,i}$= \\\hline $v_{\lambda,5} $&
$\begin{array}{l}4x_{1}^{10}+80x_{1}^{9} +655x_{1}^{8}+2780x_{1}^{7} \\ +6232x_{1}^{6}+5870x_{1}^{5}  -2355x_{1}^{4}-8730x_{1}^{3} \\ -4536x_{1}^{2}\end{array}$ & 0, 1, -1, -2, -3, -4, -7/2, -9/2 \\ 
$v_{\lambda,4} $&
$\begin{array}{l}2x_{1}^{8}+65x_{1}^{7}  +870x_{1}^{6}+6193x_{1}^{5}  +25234x_{1}^{4} \\ +58716x_{1}^{3}  +72216x_{1}^{2}+36288x_{1}\end{array}$
& -9/2,-2,-6,-7,-3,0,-8 \\ 
$v_{\lambda,3} $&$\begin{array}{l}x_{1}^{4}+20x_{1}^{3}  +137x_{1}^{2}+370x_{1}  +336\end{array}$ 
& -2, -3, -7, -8 \\ 
$v_{\lambda,2} $&$\begin{array}{l}x_{1}^{4}+9x_{1}^{3}+23x_{1}^{2}  +15x_{1}\end{array}$ 
& 0, -1, -3, -5 \\ 
$v_{\lambda,1} $&$\begin{array}{l}x_{1}^{2}+12x_{1}+27\end{array}$ 
& -3, -9 \\\hline
} 
\end{longtable}
\end{corollary}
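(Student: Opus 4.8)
The plan is to reduce the statement to two facts about the universal object, the full Verma module $M_\lambda(so(7),\gob)$, and then to transport both the singularity and the non-vanishing to an arbitrary $M$ by functoriality and by the contravariant form. First I would record that each $v_{\lambda,i}$ has the form $v_{\lambda,i}=P_i\cdot v_\lambda$, where $P_i$ is the explicit element of $U(\gom_-)$ read off from the formulas of Theorem \ref{leBranchingExplicit(x_1,0,0),(1,a,b)} ($\gom_-$ the opposite nilradical of $\gob$), with coefficients in $\mathbb Q[x_1]$, and that $P_i$ is homogeneous of fixed $\bar\goh$-weight, so $v_{\lambda,i}$ is automatically a $\bar\goh$-weight vector. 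Since the nilradical of $\bar\gob$ is generated by $i(\bar g_1)=g_1+g_3$ and $i(\bar g_2)=g_2$ (see \eqref{eqEmbedG2B3accordingToMcGovern}), $\bar\gob$-singularity of $v_{\lambda,i}$ is equivalent to the two identities $i(\bar g_1)P_i\cdot v_\lambda=0$ and $i(\bar g_2)P_i\cdot v_\lambda=0$. I would then invoke the universal property of the Verma module: whenever $M$ carries a $\gob$-singular vector $v_\lambda$ of weight $\lambda$, there is a unique $so(7)$-homomorphism $\phi\colon M_\lambda(so(7),\gob)\to M$ sending the canonical generator to $v_\lambda$, and $\phi$ carries $P_i\cdot v_\lambda$ to the vector $v_{\lambda,i}$ defined by the same formula in $M$.

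The heart of the argument is to upgrade the singularity from the generalized Verma quotient to the Verma module itself. Since $i(\bar g_j)$ is a sum of positive root vectors, reducing $i(\bar g_j)P_i$ to PBW normal form (negative root vectors, then $U(\goh)$, then positive root vectors) and applying the result to a highest weight vector uses nothing beyond the commutation relations of $so(7)$ together with $\gom\cdot v_\lambda=0$ (here $\gom$ is the nilradical of $\gob$) and $\goh\cdot v_\lambda=\lambda v_\lambda$; the outcome is a combination $\sum_\alpha c_\alpha(x_1)\,u_\alpha\cdot v_\lambda$ with $u_\alpha\in U(\gom_-)$ PBW monomials. Theorem \ref{leBranchingExplicit(x_1,0,0),(1,a,b)} asserts this combination vanishes in $M_\lambda(so(7),\gop_{(1,0,0)})$ for all $x_1$; I would then check that in the relevant $\goh$-weight the projection $M_\lambda(so(7),\gob)\to M_\lambda(so(7),\gop_{(1,0,0)})$ is injective on the span of the occurring $u_\alpha\cdot v_\lambda$, i.e. that no $u_\alpha$ meets the kernel (generated by the Levi singular vectors in $g_{-2},g_{-3}$), so that the vanishing already holds in $M_\lambda(so(7),\gob)$ and each $c_\alpha(x_1)$ is the zero polynomial. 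Applying $\phi$ then gives $i(\bar g_j)\cdot v_{\lambda,i}=\phi\bigl(i(\bar g_j)P_i\cdot v_\lambda\bigr)=0$ in $M$.

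For the non-vanishing I would use the contravariant (Shapovalov) form on $M_\lambda(so(7),\gob)$ attached to the transpose anti-involution $g_\alpha\mapsto g_{-\alpha}$. The quantity tabulated in Table \ref{tableCriticalValuesvlambda} as the coefficient of $v_\lambda$ in $Sh_{\lambda,i}$ is the contravariant norm $\langle v_{\lambda,i},v_{\lambda,i}\rangle$, a polynomial in $x_1$. When $x_1$ is not a root of it the norm is nonzero, so $v_{\lambda,i}$ lies outside the radical of the form, that is, outside the unique maximal proper submodule $N^{\max}$, and hence is nonzero in the irreducible quotient $L(\lambda)$. Since $U(so(7))\,v_\lambda\subseteq M$ is a highest weight module it surjects onto $L(\lambda)$, so non-vanishing in $L(\lambda)$ forces $v_{\lambda,i}\ne 0$ in $M$; together with the previous paragraph this shows $v_{\lambda,i}$ is a genuine $\bar\gob$-singular vector of $M$.

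I expect the decisive difficulty to be the middle step: showing that the singularity identities hold in the full Verma module and not merely in its generalized Verma quotient, i.e. that the kernel of $M_\lambda(so(7),\gob)\to M_\lambda(so(7),\gop_{(1,0,0)})$ does not interfere in the weight at hand. Because the crossed variable $x_1$ controls only the central $h_1$-grading, while the $so(5)$-module structure of $V_\lambda(\gol)$ — and hence the linear dependencies among the $u_\alpha\cdot v_\lambda$ — is independent of $x_1$, this is a finite check that can be settled weight by weight, and if necessary confirmed directly from the explicit $P_i$ with the computational tools recalled in Section \ref{secNotationPreliminaries}.
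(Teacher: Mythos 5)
Your non-vanishing argument is precisely the paper's own proof: the quantity tabulated in Table \ref{tableCriticalValuesvlambda} is $Sh_{\lambda,i}=\tau(u_{\lambda,i})u_{\lambda,i}\cdot v_\lambda=c_i(x_1)\,v_\lambda$, a computation that uses only the relations $\gom\cdot v_\lambda=0$ and $h\cdot v_\lambda=\lambda(h)v_\lambda$ and is therefore valid in any $M$ as in the statement, and $c_i(x_1)\neq 0$ forces $v_{\lambda,i}\neq 0$. Your detour through the radical of the contravariant form and the irreducible quotient $L(\lambda)$ is a correct, slightly longer, rendering of this same step.

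The genuine gap is your middle step, and it cannot be repaired, because the claim it is meant to establish is false: the singularity identities do \emph{not} hold in the full Verma module, i.e.\ the kernel of $M_\lambda(so(7),\gob)\to M_\lambda(so(7),\gop_{(1,0,0)})$ \emph{does} interfere with the span of the monomials that occur. Concretely, take $\lambda=x_1\omega_1+\omega_2$ and $v_{\lambda,1}=\left((-x_1-2)g_{-3}g_{-2}-4g_{-4}+2g_{-2}g_{-1}\right)\cdot v_\lambda$, and apply $i(\bar g_2)=g_2$ to the canonical generator of $M_\lambda(so(7),\gob)$. Using $[g_2,g_{-3}]=[g_2,g_{-1}]=0$, $[g_2,g_{-2}]=h_2$, $\lambda(h_2)=1$, $(\lambda-\eta_1)(h_2)=2$, and writing $[g_2,g_{-4}]=n\,g_{-1}$ for the relevant structure constant, one finds
\[
g_2\cdot v_{\lambda,1}=-(x_1+2)\,g_{-3}\cdot v_\lambda+(4-4n)\,g_{-1}\cdot v_\lambda\quad .
\]
Whatever the value of $n$, the coefficient of $g_{-3}\cdot v_\lambda$ is the non-zero polynomial $-(x_1+2)$; this term is invisible in $M_\lambda(so(7),\gop_{(1,0,0)})$ precisely because $g_{-3}\cdot v_\lambda=0$ there (it is a generator of the kernel, since the $\omega_3$-coordinate of $\lambda$ vanishes), and Theorem \ref{leBranchingExplicit(x_1,0,0),(1,a,b)} then forces $n=1$, so that in the Verma module $g_2\cdot v_{\lambda,1}=-(x_1+2)\,g_{-3}\cdot v_\lambda\neq 0$ for all $x_1\neq -2$. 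Hence the coefficients $c_\alpha(x_1)$ in your normal-ordered expansion are \emph{not} identically zero, the proposed injectivity check fails in the very first weight, and no transport of singularity through $M_\lambda(so(7),\gob)$ is possible. This also shows the Corollary must be read for modules $M$ in which $U(\gog)\cdot v_\lambda$ is a quotient of the generalized Verma module $M_\lambda(so(7),\gop_{(1,0,0)})$ --- in particular for finite-dimensional $M$, the application announced before the Corollary, where $g_{-3}\cdot v_\lambda=0$ holds automatically. For such $M$ singularity is inherited functorially (a $\gog$-module homomorphism carries singular vectors to singular vectors or to zero), and the only substantive point is non-vanishing of the image, which is exactly what the paper's Shapovalov computation supplies; your plan, which takes the hypothesis on $M$ literally and routes singularity through the full Verma module, would be attempting to prove a statement that is false for $M=M_\lambda(so(7),\gob)$.
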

\begin{proof}
Let $\tau$ denote the transpose anti-automorphism of $U(\gog)$, i.e., 
the linear map of $U(\gog)$ defined by
\[\begin{array}{rclr}
\tau (g_{-\beta})&:=&g_{\beta} & \beta\in \Delta(so(7)) \\
\tau(g_{\beta_1}\dots g_{\beta_k})&:=&\tau(g_{\beta_k})\dots \tau(g_{\beta_1})& \beta_j\in \Delta(so(7)) \\
\tau(h)&:=& h \quad & h\in\goh\quad . 
\end{array}
\]
Let $u_{\lambda,i}\in U(so(7))$ be an element for which 
$v_{\lambda,i}=u_{\lambda,i}\cdot v_\lambda$ (one such element 
is given in Theorem \ref{leBranchingExplicit(x_1,0,0),(1,a,b)}). 
Define $Sh_{\lambda,i}:=\tau(u_{\lambda,i}) u_{\lambda,i} \cdot v_\lambda$. Although we 
will not use this, we note that $Sh_{\lambda,i}$ does 
not depend on the choice of $u_{\lambda,i}$ (see, e.g., \cite{JacksonMilev:ShapovalovForm}).

A short consideration shows that $Sh_{\lambda,i}$ is a multiple of $v_\lambda$. 
Up to a rational scalar  this multiple is indicated in the second column of Table
\ref{tableCriticalValuesvlambda}. The scalar is chosen 
so that all polynomials have integral relatively prime coefficients and 
the leading coefficient  is positive. 
The roots of the polynomials 
$Sh_{\lambda,i}$ are all rational  and are indicated in the last column of  Table \ref{tableCriticalValuesvlambda}
(some of the roots have multiplicity higher than 1). If $Sh_{\lambda,i}$ 
does not equal to zero for a given value of $x_1$, then $v_{\lambda,i}$ cannot vanish for that value of $x_1$. 
This proves the statement.
\end{proof}

Corollary \ref{corGenVermaDecompo} implies the following. The case of $\lambda=x_1\omega_1+\omega_2+\omega_3$ is excluded
as $m(x_1\psi_1+\psi_2, \lambda)=2$.

\begin{corollary}\label{corGenVermaDecomposB3G2}
We have the following $\bar{\gog}$-module isomorphisms.
\begin{enumerate}
\item 
\[
M_{x_1\omega_1}(so(7),\gop_{(1,0,0)})\simeq M_{x_1\psi_1}(\LieGtwo, \gop_{(1,0)})\quad .
\] 
\item Suppose  $x_1\notin\{-1, -7/2, -6\} $. Then
\begin{eqnarray*}
M_{x_1\omega_1+\omega_2}(so(7),\gop_{(1,0,0)}) &\simeq& \phantom{\oplus}
M_{x_1\psi_1+\psi_2}(\LieGtwo, \gop_{(1,0)}) \\&&
\oplus  
M_{(x_1+1)\psi_1}(\LieGtwo, \gop_{(1,0)}) \\&&
\oplus
M_{(x_1-1)\psi_{1}+\psi_{2}}(\LieGtwo, \gop_{(1,0)})\quad .
\end{eqnarray*}
\item Suppose $x_1\notin\{ -5, -3, -1\}$ . Then
\begin{eqnarray*}
M_{x_1\omega_1+\omega_3}(so(7),\gop_{(1,0,0)}) &\simeq& \phantom{\oplus}
M_{(x_1+1)\psi_1}(\LieGtwo, \gop_{(1,0)}) \\&& 
\oplus  
M_{(x_1-1)\psi_1+\psi_2}(\LieGtwo, \gop_{(1,0)}) \\&&
\oplus
M_{x_1\psi_{1}}(\LieGtwo, \gop_{(1,0)})\quad .
\end{eqnarray*}
\item Suppose $x_1\notin\{ 0, -1, -4, -3, -9/2, -2, -8, -6, -7, -5, -9\}$. Then
\begin{eqnarray*}
M_{x_1\omega_1+2\omega_2}(so(7),\gop_{(1,0,0)})&\simeq& \phantom{\oplus}
M_{x_{1}\psi_{1}+2\psi_{2}}(\LieGtwo, \gop_{(1,0)}) \\&&
\oplus
M_{(x_{1}+1)\psi_{1} +\psi_{2}}(\LieGtwo, \gop_{(1,0)}) \\&&
\oplus
M_{(x_{1}+2)\psi_{1}}(\LieGtwo, \gop_{(1,0)}) \\&&
\oplus
M_{(x_{1}-1)\psi_{1}+2\psi_{2}}(\LieGtwo, \gop_{(1,0)}) \\&&
\oplus
M_{x_{1}\psi_{1}+\psi_{2}}(\LieGtwo, \gop_{(1,0)}) \\&&
\oplus
M_{(x_{1}-2)\psi_{1}+2\psi_{2}}(\LieGtwo, \gop_{(1,0)}) \quad .
\end{eqnarray*}
\item Suppose  $x_1\notin \{ -5, -3, -6, -4, -7/2, -1, -7, 0, -2\}$. Then
\begin{eqnarray*}
M_{x_1\omega_1+2\omega_3}(so(7),\gop_{(1,0,0)})&\simeq& \phantom{\oplus}
M_{(x_{1}+2)\psi_{1}}(\LieGtwo, \gop_{(1,0)}) \\&&
\oplus
M_{x_{1}\psi_{1}+\psi_{2}}(\LieGtwo, \gop_{(1,0)}) \\&&
\oplus
M_{(x_{1}+1)\psi_{1}}(\LieGtwo, \gop_{(1,0)}) \\&&
\oplus
M_{(x_{1}-2)\psi_{1}+2\psi_{2}}(\LieGtwo, \gop_{(1,0)}) \\&&
\oplus
M_{(x_{1}-1)\psi_{1}+\psi_{2}}(\LieGtwo, \gop_{(1,0)}) \\&&
\oplus
M_{x_{1}\psi_{1}}(\LieGtwo, \gop_{(1,0)}) \quad .
\end{eqnarray*}
\end{enumerate}
\end{corollary}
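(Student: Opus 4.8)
The plan is to deduce each isomorphism from Corollary \ref{corGenVermaDecompo} by verifying its four hypotheses for the five admissible families of highest weights. Two of the hypotheses are immediate: we have already observed that $\gop_{(1,0,0)}$ has a finite branching problem over $i(\LieGtwo)$ (Definition \ref{defFiniteBranchingProblem}), and each $\lambda$ listed is by construction integral with respect to $\gos=[\gol,\gol]$ and dominant with respect to $\gob\cap\gos$ for the relevant values of $x_1$. It therefore remains, for each family, to check that $n(\mu,\lambda)\leq 1$ for all $\mu\in\bar\goh^*$ and that $\lambda$ satisfies Condition B, and then to identify the summands.

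For the multiplicity bound I would read the $\bar\gol$-decomposition of the inducing module $V_\lambda(\gol)$ directly off Table \ref{tableDecompositionsOver(1,0,1)}. In the five families $x_1\omega_1$, $x_1\omega_1+\omega_2$, $x_1\omega_1+\omega_3$, $x_1\omega_1+2\omega_2$, $x_1\omega_1+2\omega_3$ every $\bar\gol$-isotypic component occurs once, and, as recorded in the third column of Table \ref{tableHWVover(1,0,1)}, the distinct $\goh$-highest weights appearing there have distinct images under $\pr$; hence $n(\mu,\lambda)\leq 1$. This is exactly the point at which the remaining family $x_1\omega_1+\omega_2+\omega_3$ is excluded: there the weight $(x_1+1)\omega_1+\omega_2-\omega_3$ appears twice, so $n(x_1\psi_1+\psi_2,\lambda)=2$ and Corollary \ref{corGenVermaDecompo} is not applicable.

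For Condition B I would use the strong form via the quadratic Casimir $\bar c_1$, exactly as in the proof of Theorem \ref{leBranchingExplicit(x_1,0,0),(1,a,b)}. Over the field $\mathbb{C}(x_1)$ the eigenvalues $p_1(\mu)$ listed in the last column of Table \ref{tableHWVover(1,0,1)} are pairwise distinct rational functions of $x_1$, so two of the relevant weights become linked only at the finitely many rational specializations of $x_1$ where two of these eigenvalues coincide. These are precisely the values excluded in Theorem \ref{leBranchingExplicit(x_1,0,0),(1,a,b)}, which I would simply quote; for $x_1$ outside them, Condition B holds. Corollary \ref{corGenVermaDecompo} then gives a direct sum of modules $M_\mu(\LieGtwo,\gop_{(1,0)})$ indexed by $\mu=\pr(\nu)$ as $\nu$ runs over the $\goh$-highest weights of Table \ref{tableDecompositionsOver(1,0,1)}; using $\pr(\omega_1)=\pr(\omega_3)=\psi_1$ and $\pr(\omega_2)=\psi_2$, these projections reproduce the summands listed in each case. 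The family $\lambda=x_1\omega_1$ is degenerate: here $V_\lambda(\gol)$ is one-dimensional, $n(x_1\psi_1,\lambda)=1$ is the only nonzero multiplicity, Condition B is vacuous, and the single-summand isomorphism follows at once.

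The genuinely case-dependent work, and the only place needing care, is the determination of the exceptional finite sets of $x_1$, i.e., the specializations at which some coincidence $p_1(\mu)=p_1(\nu)$ forces a link. I do not expect this to pose a new difficulty, since it has already been carried out in establishing Theorem \ref{leBranchingExplicit(x_1,0,0),(1,a,b)}; the present Corollary is in effect a repackaging of that data through Corollary \ref{corGenVermaDecompo}.
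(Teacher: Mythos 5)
Your proposal is correct and takes essentially the same route as the paper: the paper likewise obtains the Corollary by applying Corollary \ref{corGenVermaDecompo} to the five families, excluding $\lambda=x_1\omega_1+\omega_2+\omega_3$ precisely because $m(x_1\psi_1+\psi_2,\lambda)=2$, and drawing the multiplicity-one data, the Casimir eigenvalues $p_1(\mu)$, and the exceptional sets of $x_1$ from Theorem \ref{leBranchingExplicit(x_1,0,0),(1,a,b)} and its tables. Your write-up simply makes explicit the verification that the paper leaves as a one-line remark.
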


\subsection{Parabolic subalgebras $\gop\subset so(7)$ with non-finite branching problem over $i(\LieGtwo)$ and the case $\gop\simeq so(7)$}
\label{secInfiniteSo7G2branching}

For a parabolic subalgebra $\gop\subset so(7)$, let $\lambda(\mathbf x, \mathbf z)$ be 
as in Section \ref{secConstructingSingularVectors}.
\onlineVersionOnly{
In the current Section, for each parabolic subalgebra 
$\gop\subset so(7)$ other than $\gop_{(1,0,0)}$, for
$z_1+z_2+z_3\leq 5$, $z_i\in \mathbb Z_{\geq 0} $, we compute the strong 
Condition B (Definition \ref{defSufficientlyGenericWeight}). 
For $z_1+z_2+z_3\leq 2$, for each $\mu\in \bar{\goh}^*$ with $n(\mu,\lambda(\mathbf x, \mathbf z))>0$ 
we construct $n(\mu,\lambda(\mathbf x, \mathbf z))$ $\bar\gob$-singular vectors in 
$M_{\lambda(\mathbf x, \mathbf z)}(so(7),\gop)$ of weight $\mu$. 
}
\offlineVersionOnly{
For $z_1+z_2+z_3\leq 2$, for each $\mu\in \bar{\goh}^*$ with $n(\mu,\lambda(\mathbf x, \mathbf z))>0$, 
one can construct $n(\mu,\lambda(\mathbf x, \mathbf z))$ $\bar\gob$-singular vectors in 
$M_{\lambda(\mathbf x, \mathbf z)}(so(7),\gop)$ of weight $\mu$. Tables with the formulas 
arising through this construction can be found in the extended version of this 
text available online, \cite{MilevSomberg:BranchingExtended}.
}

As it turns out, the constructed $\bar\gob$-singular vectors remain $\bar\gob$-singular 
independent of the strong Condition B,
however they may fail to generate the ``top level'' 
$\LieGtwo$-module $Q_{\lambda(\mathbf x, \mathbf z)}$ defined by \eqref{eqTopLevel}.
\offlineVersionOnly{
Further investigation is needed to determine necessary and sufficient conditions
when the formulas produced by our method generate the entire ``top-level''
$\LieGtwo$-module $Q_{\lambda(\mathbf x, \mathbf z)}$ 
(see \eqref{eqTopLevel}), and we postpone to a future work. 

It appears that to achieve a complete understanding of the non-finite branching problem, one
should combine the methods used in the present paper with the study 
of the iterated translation functor obtained by tensoring by $ V_{\psi_1}(\LieGtwo)$,
decomposing into indecomposables, discarding all indecomposables of highest weight higher than the initial,
and iterating the procedure with all remaining indecomposable pieces of 
highest weight strictly lower than the initial. 
The application of the iterated translation functor to $M_{\mu}(\LieGtwo,\gop)$ remains to be investigated.
} 

\onlineVersionOnly{
By direct observation, for any parabolic subalgebra of $so(7)$
and $\lambda(\mathbf x, \mathbf z)$  with $z_1+z_2+z_3\leq 2$,
the set of weights $ \lambda(\mathbf x,\mathbf z)$ satisfying 
the strong Condition B (Definition \ref{defSufficientlyGenericWeight}) is 
non-empty. Therefore, if we change the base field to $\mathbb C(x_1,x_2,x_3)$, 
the strong Condition B never fails, as in the field $\mathbb C(x_1,x_2,x_3)$ a non-trivial 
$\mathbb Q$-linear combination of the $x_i$
is non-zero by definition. 
Therefore for $z_1+z_2+z_3\leq 2$, Theorem \ref{thSufficientlyGenericWeightEnablesFDbranching} applies over 
$\mathbb C(x_1,x_2,x_3)$, and specializations of the variable $x_i$ need to only exclude a proper 
Zariski-open set for the  $x_i$'s.
We present the computation of the $\bar\gob$-singular vectors 
in $M_{\lambda(\mathbf x,\mathbf z)}(so(7), \gop)$, $z_1+z_2+z_3\leq 2$ over the field
$\mathbb C(x_1,x_2,x_3)$ in Tables 
\ref{tableB3fdsOverG2charsAndHWV(0, 0, 1)}-\ref{tableB3fdsOverG2charsAndHWV(0, 1, 1)} 
(for $\gop\simeq \gop_{(1,0,0)}$ the result is already given in Theorem \ref{leBranchingExplicit(x_1,0,0),(1,a,b)}). 
The computations use Sections \ref{secWeylCharacterFormulas} and \ref{secConstructingSingularVectors} 
in a similar fashion to Theorem \ref{leBranchingExplicit(x_1,0,0),(1,a,b)} and
we do not present details. 
We note that by direct observation, the $\bar\gob$-singular vectors in the tables 
remain linearly independent under 
substitutions of the variables $x_1,x_2, x_3$ with constants. 

The linear $\neq$-inequalities that determine 
the strong Condition B (over the field $\mathbb C$) are computed explicitly 
in Tables \ref{tableB3fdsOverG2charsonly(1, 0, 0)}- \ref{tableB3fdsOverG2charsonly(0, 1, 1)} 
(including the case $\gop\simeq \gop_{(1,0,0)}$). 
In addition we compute the 
numbers $n(\mu,\lambda (\mathbf x, \mathbf z))$ for $z_1+z_2+z_3\leq 5$ 
with coefficients in $\mathbb C(x_1, x_2, x_3)$,  
as described in Section \ref{secConstructingSingularVectors}. 

We supplement the infinite dimensional branching by Table \ref{tableB3fdsOverG2charsAndHWV},
where we list the $\bar\gob$-singular vectors 
that decompose the finite dimensional $so(7)$-modules 
$z_1\omega_1+z_2\omega_2+z_3\omega_3$ with $z_1+z_2+z_3\leq 2$ 
over $\LieGtwo$. The numbers $n(\lambda,\mu)$ for the finite dimensional branching 
can be computed using \cite{McGovernG2InB3Branching} and we omit the corresponding table.

\subsubsection{Tables of $\gob$-singular vectors induced from $V_\lambda(\gol) $} \label{secAppendixSingularVectors}
In the first and second columns we write the module $V_\lambda(\gol)$, 
abbreviated as $V_\lambda$, and its dimension. 
The weight $\lambda\in \goh^*$ is taken with coefficients in 
$\mathbb C(x_1,x_2, x_3)$. In the 
third and fourth columns we list the $\bar\gol$-summands from the decomposition 
of $V_{\lambda}(\gol)$ over $\bar\gol$ and their dimensions. We abbreviate $V_\mu(\bar{\gol})$ as $V_\mu$.
In the fifth column we give an $\bar\gob\cap \bar{\gol}$-singular vector corresponding 
to each summand $V_{\lambda}(\gol)$. 
In the sixth column we give the Casimir projector whose existence 
is given by Theorem \ref{thSufficientlyGenericWeightEnablesFDbranching}, 
and in the last column we give the corresponding $\bar{\gob}$-singular vector 
as given by Theorem \ref{thSufficientlyGenericWeightEnablesFDbranching}. 
We note that for the case of the full parabolic subalgebra $\gop\simeq so(7)$ 
the last two columns are not applicable. We also note that the vectors listed 
in the last column remain $\bar\gob$-singular independent of the strong Condition B.



\end{landscape}
\subsubsection{Tables of $n(\mu,\lambda)$}
In the left column we write the module $V_\lambda(\gol)$, abbreviated as $V_\lambda$. 
The weight $\lambda\in \goh^*$ is taken with coefficients in 
$\mathbb C(x_1,x_2, x_3)$. In the right column we write the decomposition of
$V_{\lambda}(\gol)$ as a direct sum of $V_\mu(\bar{\gol})$-modules, $\mu\in \bar\goh^*$. 
For $\mu\in \bar{\goh}^*$ we abbreviate $V_\mu(\bar{\gol})$ by $V_\mu$.
In the row below each pair of columns, we indicate, for each $\lambda\in \goh^*$ with $\mathbb C(x_1,x_2, x_3)$, the
Zariski open conditions on the $x_i$'s which imply the strong Condition B. 
We recall that $\psi_1,\psi_2$ stand for the fundamental weights of $\LieGtwo$ and 
$\omega_1,\omega_2,\omega_3$ stand for the fundamental weights of $so(7)$.


} 



\medskip

\flushleft{{\em Acknowledgment}: 
The authors gratefully acknowledge the support by the 
Czech Grant Agency through the grant GA CR P 201/12/G028. 
}


\begin{thebibliography}{10}

\bibitem{BBCV}
W.~Baldoni, M.~Beck, C.~Cochet, and M.~Vergne.
\newblock Volume computation for polytopes and partition functions for
  classical root systems.
\newblock {\em Discr. and Comp. Geometry}, 35:551--595, 2006.

\bibitem{BincerRiesselmann:CasimirG2}
A.~M. Bincer and K.~Riesselmann.
\newblock Casimir operators of the exceptional group {$G_2$}.
  \url{http://arXiv.org/abs/hep-th/9306062v1}.
\newblock {\em J.Math.Phys.}, 34:5935--5941, 1993.

\bibitem{Boe}
B.~D. Boe.
\newblock Homomorphisms between generalized {V}erma modules.
\newblock {\em Trans. Amer. Math. Soc.}, 288:791--799, 1985.

\bibitem{DeGraaf:ConstructingSSsubalgebras}
W.~de~Graaf.
\newblock Constructing semisimple subalgebras of semisimple {L}ie algebras.
\newblock {\em Journal of Algebra}, 325:416--430, 2011.

\bibitem{Dixmier}
J.~Dixmier.
\newblock {\em Algebres Enveloppantes}.
\newblock Gauthier-Villars Editeur, Paris-Bruxelles-Montreal, 1974.

\bibitem{GrahamWillse:ParallelTractor}
R.~C. Graham and T.~Willse.
\newblock Parallel tractor extension and ambient metrics of holonomy split
  {$G_2$} , \url{http://xxx.lanl.gov/abs/1109.3504}.

\bibitem{Humphreys}
J.~Humphreys.
\newblock {\em Introduction to {L}ie algebras and representation theory}.
\newblock Springer-Verlag, New York, 1972.

\bibitem{Humphreys:ReflectionGroups}
J.~Humphreys.
\newblock {\em Reflection groups and {C}oxeter groups}.
\newblock Camb. Univ. Press, 1994.

\bibitem{HumphreysNewBook}
J.~Humphreys.
\newblock {\em Representations of semisimple {L}ie algebras in the {BGG}
  category $\mathcal{O}$}.
\newblock Amer. Math. Soc., 2008.

\bibitem{JacksonMilev:ShapovalovForm}
S.~Jackson and T.~Milev.
\newblock Computing irreducible finite dimensional complex {L}ie algebra
  representations, in preparation.

\bibitem{Kobayashi:branching}
T.~Kobayashi.
\newblock Restrictions of generalized {V}erma modules to symmetric pairs.
\newblock {\em Transf. Groups}, 17:523--546, 2012.

\bibitem{koss}
T.~Kobayashi, B.~Oersted, P.~Somberg, and V.~Soucek.
\newblock Branching laws for {V}erma modules and applications in parabolic
  geometry {I} and {II}, preprint.

\bibitem{Lepowski:GeneralizationBGG}
J.~Lepowsky.
\newblock A generalization of the {B}ernstein-{G}elfand-{G}elfand resolution.
\newblock {\em J. of Algebra}, 49:496--511, 1977.

\bibitem{Littelmann:ConesCrystalsPatterns}
P.~Littelmann.
\newblock Cones, crystals, and patterns.
\newblock {\em Transf. Groups}, 3:145--179, 1998.

\bibitem{Mat}
H.~Matumoto.
\newblock The homomorphisms between scalar generalized {V}erma modules
  associated to maximal parabolic subalgebras.
\newblock {\em Duke Math. J.}, 131:75–118, 2006.

\bibitem{McGovernG2InB3Branching}
W.~McGovern.
\newblock A branching law for {$Spin(7,{\mathbb{C}})\to {G}_2$} and its
  applications to unipotent representations.
\newblock {\em J. Algebra}, 130:166–175, 1990.

\bibitem{Milev:PartialFractions}
T.~Milev.
\newblock Partial fraction decompositions and an algorithm for computing the
  vector partition function,
  \href{http://arxiv.org/abs/0910.4675}{arXiv:0910.4675v2}.
\newblock 2009.

\bibitem{Milev:vpf}
T.~Milev.
\newblock Vector partition function project,
  \url{http://sourceforge.net/projects/vectorpartition/}, {O}ctober 2012.
  {S}ource code, revision 1119,
  \url{http://sourceforge.net/p/vectorpartition/code/1119/tree/}.
\newblock 2009-2012.

\bibitem{MilevSomberg:BranchingExtended}
T.~Milev and P.~Somberg.
\newblock The branching problem for generalized {V}erma modules, with
  application to the pair {($so(7),Lie G_2$)}, extended version with tables,
  arxiv: {\url{http://arxiv.org/abs/1209.3970v2}}.

\bibitem{PopovVinberg:InvariantTheory}
V.~Popov and E.~Vinberg.
\newblock {\em Invariant Theory, in {A}lgebraic geometry – {IV}. Translated
  from Russian by G. Kandall}, volume~55.
\newblock Springer-Verlag (Russian: I. Nauki i Tekhniki. Ser. Sovrem. Probl.
  Mat. Fund. Napr.), 1989.

\bibitem{Shapovalov}
N.~Shapovalov.
\newblock On a bilinear form of the universal enveloping algebra of a complex
  semisimple {L}ie algebra ({R}ussian).
\newblock {\em Funk. Anal. i Prilozh.(Russian)}, 6:65--70, 1972.

\end{thebibliography}

\end{document}